\numberwithin{subsection}{section}
\numberwithin{equation}{section}
\theoremstyle{plain}
\newtheorem{thm}[equation]{Theorem}
\newtheorem{prop}[equation]{Proposition}
\newtheorem{cor}[equation]{Corollary}
\newtheorem{lemma}[equation]{Lemma}
\newtheorem{conj}[equation]{Conjecture}
\theoremstyle{definition}
\theoremstyle{remark}
\newtheorem{rem}[equation]{Remark}
\newtheorem{exer}[equation]{Exercise}
\newtheorem{rem-exer}[equation]{Remark/Exercise}
\newtheorem{rem-exers}[equation]{Remark/Exercises}
\DeclareSymbolFont{cyrletters}{OT2}{wncyr}{m}{n}
\DeclareMathSymbol{\sha}{\mathalpha}{cyrletters}{"58}
\newcommand{\CC}{\mathcal{C}}
\newcommand{\CCbar}{\overline{\mathcal{C}}}
\newcommand{\DD}{\mathcal{D}}
\newcommand{\DDbar}{\overline{\mathcal{D}}}
\newcommand{\JJ}{\mathcal{J}}
\newcommand{\KK}{\mathcal{K}}
\renewcommand{\SS}{\mathcal{S}}
\newcommand{\TT}{\mathcal{T}}
\newcommand{\UU}{\mathcal{U}}
\newcommand{\VV}{\mathcal{V}}
\newcommand{\XX}{\mathcal{X}}
\newcommand{\XXbar}{\overline{\mathcal{X}}}
\newcommand{\YY}{\mathcal{Y}}
\newcommand{\GG}{{\mathcal{G}}}
\renewcommand{\O}{\mathcal{O}}
\newcommand{\FF}{\mathcal{F}}
\newcommand{\EE}{\mathcal{E}}
\newcommand{\OO}{\mathcal{O}}
\newcommand{\F}{\mathbb{F}}
\newcommand{\Fp}{{\mathbb{F}_p}}
\newcommand{\Fq}{{\mathbb{F}_q}}
\newcommand{\Fpbar}{{\overline{\mathbb{F}}_p}}
\newcommand{\Qbar}{{\overline{\mathbb{Q}}}}
\newcommand{\kbar}{{\overline{k}}}
\newcommand{\Kbar}{{\overline{K}}}
\newcommand{\ratto}{{\dashrightarrow}}
\newcommand{\Ql}{{\mathbb{Q}_\ell}}
\newcommand{\Zl}{{\mathbb{Z}_\ell}}
\newcommand{\Qp}{{\mathbb{Q}_p}}
\newcommand{\Zp}{{\mathbb{Z}_p}}
\newcommand{\Z}{\mathbb{Z}}
\newcommand{\Zhat}{\hat{\mathbb{Z}}}
\newcommand{\Q}{\mathbb{Q}}
\newcommand{\R}{\mathbb{R}}
\newcommand{\C}{\mathbb{C}}
\newcommand{\A}{\mathbb{A}}
\renewcommand{\P}{\mathbb{P}}
\newcommand{\Ksep}{K^{sep}}
\newcommand{\ksep}{k^{sep}}
\newcommand{\xbar}{{\overline{x}}}
\newcommand{\vbar}{{\overline{v}}}
\newcommand{\zbar}{{\overline{z}}}
\newcommand{\Xbar}{\overline{X}}
\newcommand{\etabar}{\overline{\eta}}
\newcommand{\m}{\mathfrak{m}}
\newcommand{\n}{\mathfrak{n}}
\newcommand{\T}{\mathcal{T}}
\newcommand{\into}{\hookrightarrow}
\newcommand{\isoto}{\tilde{\to}}
\newcommand{\tensor}{\otimes}
\newcommand{\nodiv}{\not|}
\newcommand{\G}{\mathbb{G}}
\newcommand{\labeledto}[1]{\overset{#1}{\to}}
\DeclareMathOperator{\im}{Im}
\DeclareMathOperator{\coker}{coker}
\DeclareMathOperator{\tr}{Tr}
\DeclareMathOperator{\res}{Res}
\DeclareMathOperator{\ord}{ord}
\DeclareMathOperator{\rk}{Rank}
\DeclareMathOperator{\RP}{Re}
\DeclareMathOperator{\Hom}{Hom}
\DeclareMathOperator{\aut}{Aut}
\DeclareMathOperator{\Pic}{Pic}
\DeclareMathOperator{\Picf}{\underline{Pic}}
\DeclareMathOperator{\Picvar}{PicVar}
\DeclareMathOperator{\NS}{NS}
\DeclareMathOperator{\MW}{MW}
\DeclareMathOperator{\Br}{Br}
\DeclareMathOperator{\gal}{Gal}
\DeclareMathOperator{\spec}{Spec}
\DeclareMathOperator{\proj}{Proj}
\DeclareMathOperator{\mor}{Mor}
\DeclareMathOperator{\divr}{Div}
\DeclareMathOperator{\Swan}{Swan}
\DeclareMathOperator{\Sel}{Sel}
\DeclareMathOperator{\Fr}{Fr}
\DeclareMathOperator{\ch}{Char}
\def\clap#1{\hbox to 0pt{\hss#1\hss}}
\begin{document}

\frontmatter

\makeatletter
\renewcommand{\@title}{Curves and Jacobians over function fields}
\renewcommand{\@author}{Douglas Ulmer\\School of Mathematics\\Georgia
  Institute of Technology\\Atlanta, GA~~30332}
\renewcommand{\@date}{}
\makeatother

\maketitle

\tableofcontents
\newpage
\section*{Preface}
\addcontentsline{toc}{section}{Preface}
\markboth{Preface}{Preface}\thispagestyle{empty} 
These notes originated in a 12-hour course of lectures given at
the Centre de Recerca Mathem\`atica near Barcelona in February, 2010.
The aim of the course was to explain results on curves and their
Jacobians over function fields, with emphasis on the group of rational
points of the Jacobian, and to explain various constructions of
Jacobians with large Mordell-Weil rank.

More so than the lectures, these notes emphasize foundational results
on the arithmetic of curves and Jacobians over function fields, most
importantly the breakthrough works of Tate, Artin, and Milne on the
conjectures of Tate, Artin-Tate, and Birch and Swinnerton-Dyer.  We
also discuss more recent results such as those of Kato and Trihan.
Constructions leading to high ranks are only briefly reviewed, because
they are discussed in detail in other recent and forthcoming
publications.

These notes may be viewed as a continuation of my Park City notes
\cite{Ulmer11}.  In those notes, the focus was on elliptic curves and
finite constant fields, whereas here we discuss curves of high genera
and results over more general base fields.

It is a pleasure to thank the organizers of the CRM Research Program,
especially Francesc Bars, for their efficient organization of an
exciting meeting, the audience for their interest and questions, the
National Science Foundation for funding to support the travel of
junior participants (grant DMS 0968709), and the editors for their
patience in the face of many delays.  It is also a pleasure to thank
Marc Hindry, Nick Katz, and Dino Lorenzini for their help.
%%   dlu 2012-Oct-26
Finally, thanks to Timo Keller and Ren\'e Pannekoek for corrections.
%%   dlu 2012-Oct-26

I welcome all comments, and I plan to maintain a list of corrections
and supplements on my web page.  Please check there for updates if you
find the material in these notes useful.

\bigskip

{\hfill Atlanta, March 24, 2012}

\mainmatter

\chapter{Dramatis Personae}
\section{Notation and standing hypotheses}
Unless explicitly stated otherwise, all schemes are assumed to be
Noetherian and separated and all morphisms of schemes are assumed to
be separated and of finite type.

A {\it curve\/} over a field $F$ is a scheme over $F$ that is reduced
and purely of dimension 1, and a {\it surface\/} is similarly a scheme
over $F$ which is reduced and purely of dimension 2.  Usually our
curves and surfaces will be subject to further hypotheses, like
irreducibility, projectivity, or smoothness.

We recall that a scheme $Z$ is {\it regular\/} if each of its local
rings is regular.  This means that for each point $z\in Z$, with local
ring $\OO_{Z,z}$, maximal ideal $\m_z\subset\OO_{Z,z}$, and residue
field $\kappa(z)=\OO_{Z,z}/\m_z$, we have
$$\dim_{\kappa(z)}\m_z/\m_z^2 = \dim_zZ.$$
Equivalently, $\m_z$ should be generated by $\dim_zZ$ elements.

A morphism $f:Z\to S$ is {\it smooth\/} (of relative dimension $n$) at
$z\in Z$ if there exist affine open neighborhoods $U$ of $z$ and $V$
of $f(z)$ such that $f(U)\subset V$ and a diagram
\begin{equation*}
\xymatrix{
U\ar[d]_f\ar[r]&\ar[d]{\spec R[x_1,\dots,x_{n+k}]/(f_1,\dots,f_k)}\\
V\ar[r]&\spec R}
\end{equation*}
where the horizontal arrows are open immersions and where the Jacobian
matrix $(\partial f_i/\partial x_j(z))_{ij}$ has rank $k$.  Also, $f$
is {\it smooth\/} if it is smooth at each $z\in Z$.
 
Smoothness is preserved under arbitrary change of base.  If
$f:Z\to\spec F$ with $F$ a field, then $f$ smooth at $z\in Z$ implies
that $z$ is a regular point of $Z$.  The converse holds if $F$ is
perfect, but fails in general.  See
Section~\ref{s:smoothness-regularity} below for one example.  A lucid
discussion of smoothness, regularity, and the relations between them
may be found in \cite[Ch.~V]{MumfordOdaAG2}.

If $Y$ is a scheme over a field $F$, the notation $\overline{Y}$ will
mean $Y\times_F\overline{F}$ where $\overline{F}$ is an algebraic
closure of $F$.  Letting $F^{sep}\subset\overline{F}$ be a separable
closure of $F$, we will occasionally want the Galois group
$\gal(F^{sep}/F)$ to act on objects associated to $\overline{Y}$.  To
do this, we note that the action of Galois on $F^{sep}$ extends
uniquely to an action on $\overline{F}$.  We also note that the
\'etale cohomology groups $H^i(Y\times_F\overline{F},\Ql)$ and
$H^i(Y\times_FF^{sep},\Ql)$ are canonically isomorphic
\cite[VI.2.6]{MilneEC}.

\section{Base fields}
Throughout, $k$ is a field, $\CC$ is a smooth, projective, absolutely
irreducible curve over $k$, and $K=k(\CC)$ is the field of rational
functions on $\CC$.  Thus $K$ is a regular extension of $k$ (i.e.,
$K/k$ is separable and $k$ is algebraically closed in $K$) and the
transcendence degree of $K$ over $k$ is 1.  The function fields of our
title are of the form $K=k(\CC)$.

% on separability in an infinite extension:
%  $K/k$ is separable if every subfield $F\subset K$ finitely generated
%  over $k$ admits a transcendence basis s.t. $F$ is separable over
%  $k(basis)$.
% Equivalently, $K$ is linearly disjoint from $k^{p^{-1}}$
% See Eisenbud A1.2 and following
% or Nagata, Theory of Commutative Fields (AMS)
% even better, Cohn, Algebra v.3, 5.5:  A/k is separable iff for every
% field extension F of k, A(tensor)F is reduced.
% it would be nice to have an example of k alg closed in K but not separable

We view the base field $K$ as more-or-less fixed, except that we are
willing to make a finite extension of $k$ if it will simplify matters.
At a few places below (notably in Sections~\ref{s:XX} and
\ref{s:smoothness-regularity}), we extend $k$ for convenience, to
ensure for example that we have a rational point or smoothness.

We now introduce standing notations related to the field $K$.  Places
(equivalence classes of valuations, closed points of $\CC$) will be
denoted $v$ and for each place $v$ we write $K_v$, $\OO_v$, $\m_v$,
and $k_v$ for the completion of $K$, its ring of integers, its maximal
ideal, and its residue field respectively.

Fix once and for all separable closures $\Ksep$ of $K$ and $\Ksep_v$
of each $K_v$ and embeddings $\Ksep\into\Ksep_v$.  Let $\ksep$ be the
algebraic closure of $k$ in $\Ksep$; it is a separable closure of $k$
and the 
%%   dlu 2012-Oct-26
%%   embeddings $\Ksep\into\Ksep_v$ identify 
embedding $\Ksep\into\Ksep_v$ identifies 
%%   dlu 2012-Oct-26
$\ksep$ with the residue field of $\Ksep_v$.

We write $G_K$ for $\gal(\Ksep/K)$ and similarly for $G_{K_v}$ and
$G_k$.  The embeddings $\Ksep\into\Ksep_v$ identify $G_{K_v}$ with a
decomposition group of $K$ at $v$.  We also write $D_v\subset G_K$ for
this subgroup, and $I_v\subset D_v$ for the corresponding inertia group.

\section{The curve $X$}
Throughout, $X$ will be a curve over $K$ which is always assumed to be
smooth, projective, and absolutely irreducible.  Thus $K(X)$ is a
regular extension of $K$ of transcendence degree 1.

The genus of $X$ will be denoted $g_X$, and since we are mostly
interested in the arithmetic of the Jacobian of $X$, we always assume
that $g_X>0$.

We do not assume that $X$ has a $K$-rational point.  More
quantitatively, we let $\delta$ denote the {\it index\/} of $X$, i.e.,
the $\gcd$ of the degrees of the residue field extensions
$\kappa(x)/K$ as $x$ runs over all closed points of $X$.
Equivalently, $\delta$ is the smallest positive degree of a
$K$-rational divisor on $X$. We write $\delta'$ for the {\it period\/}
of $X$, i.e., the smallest degree of a $K$-rational divisor class.
(In fancier language, $\delta'$ is the order of $\Pic^1$ as an element
of the Weil-Ch\^atelet group of $J_X$.  It is clear that $\delta'$
divides $\delta$.  It is also easy to see (by considering the divisor
of a $K$-rational regular 1-form) that $\delta|(2g_X-2)$.  Lichtenbaum
\cite[Thm.~8]{Lichtenbaum69} showed that $\delta|2\delta^{\prime2}$,
and if $(2g_x-2)/\delta$ is even, then $\delta|\delta^{'2}$.

Similarly, for a closed point $v$ of $\CC$, we write $\delta_v$ and
$\delta'_v$ for the index and period of $X\times_KK_v$.  It is known
that 
%%   dlu 2012-Oct-26
%%   $\delta'_v|(g_x-1)$,
$\delta'_v|(g_X-1)$, 
%%   dlu 2012-Oct-26
the ratio $\delta/\delta'$ is either 1 or 2,
and it is 1 if $(g_X-1)/\delta'$ is even.  (These facts follow from
the arguments in \cite[Thm.~7]{Lichtenbaum69} together with the
duality results in \cite{MilneADT}.)

\section{The surface $\XX$}\label{s:XX-intro}
Given $X$, there is a unique surface $\XX$ equipped with a morphism
$\pi:\XX\to\CC$ with the following properties: $\XX$ is irreducible
and regular, $\pi$ is proper and relatively minimal (defined
below), and the generic fiber of $\XX\to\CC$ is $X\to\spec K$.

Provided that we are willing to replace $k$ with a finite extension, we
may also insist that $\XX\to\spec k$ be smooth.  We generally make
this extension, and also if necessary extend $k$ so that $\XX$ has a
$k$-rational point.

The construction of $\XX$ starting from $X$ and discussion of its
properties (smoothness, cohomological flatness, ...) will be given in
Chapter~\ref{ch:XX} below.

We note that any smooth projective surface $\SS$ over $k$ is closely
related to a surface $\XX$ of our type.  Indeed, after possibly
blowing up a finite number of points, $\SS$ admits a morphism to
$\P^1$ whose generic fiber is a curve of our type (except that it might
have genus 0).  Thus an alternative point of view would be to start
with the surface $\XX$ and construct the curve $X$.  We prefer to
start with $X$ because specifying a curve over a field can be done
succinctly by specifying its field of functions.

\section{The Jacobian $J_X$}
We write $J_X$ for the Jacobian of $X$, a $g$-dimensional, principally
polarized abelian variety over $K$.  It represents the relative Picard
functor $\Picf^0_{X/K}$.  The definition of this functor and results
on its representability  are reviewed in Chapter~\ref{ch:J}.

We denote by $(B,\tau)$ the $K/k$-trace of $J_X$.  By definition, this
is the final object in the category of pairs $(A,\sigma)$ where $A$ is
an abelian variety over $k$ and $\sigma:A\times_kK\to J_X$ is a
$K$-morphism of abelian varieties.  We refer to \cite{Conrad06} for a
very complete account of the $K/k$-trace in the language of schemes.
We will calculate $(B,\tau)$ (completely in a special case, and
up to inseparable isogeny in general) in Section~\ref{s:S-T} .

One of the main aims of these notes is to discuss the arithmetic of
$J_X$, especially the Mordell-Weil group $J_X(K)/\tau B(k)$ and the
Tate-Shafarevich group $\sha(J_X/K)$ as well as their connections with
analogous invariants of $\XX$.

\section{The N\'eron model $\JJ_X$}
We denote the N\'eron model of $J_X$ over $\CC$ by $\JJ_X\to\CC$ so
that $\JJ_X\to\CC$ is a smooth group scheme satisfying a universal
property.  More precisely, for every place $v$ of $K$, every
$K_v$-valued point of $J_X$ should extend uniquely to a section of
$\JJ_X\times_\CC\spec\O_v\to\spec\O_v$.  We refer to \cite{Artin86a}
for a brief overview and \cite{BoschLutkebohmertRaynaudNM} for a
thorough treatment of N\'eron models.

There are many interesting results to discuss about $\JJ_X$, including
a fine study of the component groups of its reduction at places of
$\CC$, monodromy results, etc.  Due to constraints of time and space,
we will not be able to discuss any of these, and so we will have
nothing more to say about $\JJ_X$ in these notes.

\section{Our plan}
Our goal is to discuss the connections between the objects $X$, $\XX$,
and $J_X$.  Specifically, we will study the arithmetic of $J_X$ (its
rational points, Tate-Shafarevich group, $BSD$ conjecture), the
arithmetic of $\XX$ (its N\'eron-Severi group, Brauer group, Tate and
Artin-Tate conjectures), and connections between them.

In Chapters~\ref{ch:XX} and \ref{ch:J} we discuss the construction and
first properties of $\XX\to\CC$ and $J_X$.  In the following three
chapters, we work out the connections between the arithmetic of these
objects, mostly in the case when $k$ is finite, with the $BSD$ and Tate
conjectures as touchstones.  In Chapter~\ref{ch:complements} we give a
few complements on related situations and other ground fields.  In
Chapter~\ref{ch:cases} we review known cases of the Tate
conjecture and their consequences for Jacobians, and in
Chapter~\ref{ch:ranks} we review how one may use these results to
produce Jacobians with large Mordell-Weil groups.

\chapter{Geometry of $\XX\to\CC$}\label{ch:XX}

\section{Construction of $\XX$}\label{s:XX}

Recall that we have fixed a smooth, projective, absolutely irreducible
curve $X$ over $K=k(\CC)$ of genus $g_X>0$.

\begin{prop}
  There exists a regular, irreducible surface $\XX$ over $k$ equipped
  with a morphism $\pi:\XX\to\CC$ which is projective, relatively
  minimal, and with generic fiber $X\to\spec K$.  The pair $(\XX,\pi)$
  is uniquely determined by these properties.  We have that $\XX$ is
  absolutely irreducible and projective over $k$ and that $\pi$ is
  flat, $\pi_*\OO_\XX=\OO_\CC$, and $\pi$ has connected fibers.
\end{prop}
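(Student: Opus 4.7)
The plan is to build $\XX$ in three stages---produce some projective model of $X$ over $\CC$, resolve its singularities, then contract vertical exceptional curves to achieve relative minimality---and to handle uniqueness by a rigidity argument that crucially uses $g_X>0$.

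For existence, I would embed $X \subset \P^N_K$ for some $N$ and take its scheme-theoretic closure inside $\P^N_\CC$; the projection gives a projective integral model $\XX_0\to\CC$ whose generic fiber is $X$. Since $K/k$ is a regular extension and $X$ is absolutely irreducible over $K$, the generic fiber of $\XX_0\times_k\kbar\to\CC\times_k\kbar$ remains irreducible, and hence $\XX_0$ is absolutely irreducible over $k$. I would then invoke resolution of singularities for excellent surfaces (Lipman, valid in all characteristics) to obtain a regular, projective model $\XX_1\to\CC$ still with generic fiber $X$. Finally, I would pass to a relatively minimal model by iteratively contracting vertical $(-1)$-curves via Castelnuovo's criterion; the process terminates because each contraction strictly decreases the rank of $\NS(\XX_1)$, which is finitely generated.

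The main obstacle is uniqueness. Given another such $\XX'$, the identity on generic fibers gives a birational map $\XX\ratto\XX'$ over $\CC$. By elimination of indeterminacies for birational maps between smooth surfaces, this is resolved by a finite sequence of blow-ups $\XX''\to\XX$, and the induced birational morphism $\XX''\to\XX'$ then factors into contractions of $(-1)$-curves. The essential input, where $g_X>0$ is used, is the Lichtenbaum-Shafarevich rigidity theorem: when the generic fiber has positive arithmetic genus, every $(-1)$-curve appearing in the process is vertical, and the result of contracting all vertical $(-1)$-curves is independent of the order of contraction. Hence both $\XX$ and $\XX'$ arise from $\XX''$ by contracting the same set of curves, and are therefore isomorphic over $\CC$. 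I would cite this theorem rather than reprove it.

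The remaining assertions are routine. Projectivity of $\XX$ over $k$ follows from projectivity of $\pi$ together with projectivity of $\CC\to\spec k$. Flatness of $\pi$ holds because $\XX$ is integral, $\CC$ is regular of dimension one, and $\pi$ is dominant (the standard flatness criterion over a Dedekind base). For $\pi_*\OO_\XX=\OO_\CC$ and connected fibers, consider the Stein factorization $\pi=g\circ f$ with $g\colon\CC'\to\CC$ finite and $f_*\OO_\XX=\OO_{\CC'}$; the generic fiber $X$ is geometrically connected (being absolutely irreducible over $K$), so $g$ is generically an isomorphism, and being finite and birational with $\CC$ normal it is an isomorphism by Zariski's Main Theorem. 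Thus $\pi=f$, so $\pi_*\OO_\XX=\OO_\CC$, which in turn implies that every fiber of $\pi$ is connected.
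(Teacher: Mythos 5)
Your proposal is correct and follows essentially the same route as the paper: construct a projective integral model over $\CC$, resolve by Lipman's theorem, contract vertical $(-1)$-curves to reach relative minimality, cite Lichtenbaum's uniqueness theorem, deduce flatness from dominance over the smooth curve $\CC$, and obtain $\pi_*\OO_\XX=\OO_\CC$ and connectedness of fibers via Stein factorization together with the fact that $K$ is algebraically (here: separably, via geometric connectedness) closed in $K(X)$. The only cosmetic difference is that you take the closure of $X$ directly in $\P^N_\CC$, whereas the paper spreads out an affine model over an open part of $\CC$ and takes the closure in $\P^N_k$; both then proceed identically.
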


\begin{proof}
  To show the existence of $\XX$, we argue as follows.  Choose a
  non-empty affine open subset $U\subset X$ and an affine model for
  $U$:
$$U=\spec K[x_1,\dots,x_m]/(f_1,\dots,f_n).$$
Let $\CC^0\subset\CC$ be a non-empty affine open where all of the
coefficients of the $f_i$ (which are elements of $K$) are regular
functions.  Let $R=\OO_\CC(\CC^0)$, a $k$-algebra and Dedekind domain.
Let $\UU$ be the affine $k$-scheme
$$\UU=\spec R[x_1,\dots,x_m]/(f_1,\dots,f_n).$$
Then $\UU$ is reduced and irreducible and the inclusion 
$$R\to R[x_1,\dots,x_m]/(f_1,\dots,f_n)$$ 
induces a morphism $\UU\to\CC^0$ whose generic fiber is $U\to\spec K$.
Imbed $\UU$ in some affine space $\A^N_k$ and let $\XX_0$ be the
closure of $\UU$ in $\P^N_k$.  Thus $\XX_0$ is reduced and
irreducible, but it may be quite singular.

Lipman's general result on desingularizing 2-dimensional schemes (see
\cite{Lipman78} or \cite{Artin86b}) can be used to find a non-singular
model of $\XX_0$.  More precisely, normalizing $\XX_0$ results in a
scheme with isolated singularities.  Let $\XX_1$ be the result of
blowing up the normalization of $\XX_0$ once at each (reduced) closed
singular point.  Now inductively let $\XX_n$ ($n\ge2$) be the result
of normalizing $\XX_{n-1}$ and blowing up each singular point.
Lipman's theorem is that the sequence $\XX_n$ yields a regular scheme
after finitely many steps.  The resulting regular projective scheme
$\XX_n$ is equipped with a rational map to $\CC$.

After further blowing up and/or blowing down, we arrive at an
irreducible, regular, 2-dimensional scheme projective over $k$ with a
projective, relatively minimal morphism $\XX\to\CC$ whose generic
fiber is $X\to\spec K$.  Here relatively minimal means that if $\XX'$
is regular with a proper morphism $\XX'\to\CC$, and if there is a
factorization $\XX\labeledto{f}\XX'\to\CC$ with $f$ a birational
proper morphism, then $f$ is an isomorphism.  This is equivalent to
there being no $(-1)$-curves in the fibers of $\pi$.

The uniqueness of $\XX$ (subject to the properties) follows from
\cite[Thm.~4.4]{Lichtenbaum68}.

Since $\CC$ is a smooth curve, $\XX$ is irreducible, and $\pi$ is
dominant, $\pi$ is flat.
 
If $k'$ is an extension of $k$ over which $\XX$ becomes reducible,
then every component of $\XX\times_kk'$ dominates $\CC\times_kk'$.
(This is because $\pi$ is flat, so $\XX\times_kk'\to\CC\times_kk'$ is
flat.)  In this case, $X$ would be reducible over $k'K$.  But we assumed
that $X$ is absolutely irreducible, so $\XX$ must also be absolutely
irreducible.

By construction $\XX$ is projective over $k$.

Let $\CC'$ be 
%%   dlu 2012-Oct-26
%%   ${\bf\spec}_{\OO_\XX}\pi_*\OO_{\XX}$
${\bf\spec}_{\OO_\CC}\pi_*\OO_{\XX}$
%%   dlu 2012-Oct-26
(global spec) so
that the Stein factorization of $\pi$ is $\XX\to\CC'\to\CC$.  Then
$\CC'\to\CC$ is finite and flat.  Let $\CC'_\eta=\spec L\to\eta=\spec
K$ be the generic fiber of $\CC'\to\CC$.  Then the algebra $L$ is
finite and thus algebraic over $K$.  On the other hand, $L$ is
contained in $k(\XX)$ and $K$ is algebraically closed in $k(\XX)$, so
we have $L=K$.  Thus $\CC'\to\CC$ is finite flat of degree 1, and
since $\CC$ is smooth, it must be an isomorphism.  This proves that
$\pi_*\OO_\XX=\OO_\CC$.  It follows (e.g., by
\cite[III.11.3]{HartshorneAG}) that the fibers of $\pi$ are all
connected.

This completes the proof of the proposition.
\end{proof}

For the rest of the paper, $\pi:\XX\to\CC$ will be the fibration
constructed in the proposition.  We will typically extend $k$ if
necessary so that $\XX$ has a $k$-rational point.

\section{Smoothness and regularity}\label{s:smoothness-regularity}

If $k$ is perfect, then because $\XX$ is regular, $\XX\to \spec k $ is
automatically smooth.  However, it need not be smooth if $k$ is not
perfect.

Let us consider an example.  Since smoothness and regularity are local
properties, we give an affine example and leave to the reader the easy
task of making it projective.  Let $\F$ be a field of characteristic
$p>0$ and let $\tilde\XX$ be the closed subset of $\A^4_\F$ defined by
$y^2+xy-x^3-(t^p+u)$.  The projection $(x,y,t,u)\mapsto(x,y,t)$
induces an isomorphism $\tilde\XX\to\A^3$ and so $\tilde\XX$ is a
regular scheme.  Let $k=\F(u)$ and let $\XX\to\spec k$ be the generic
fiber of the projection $\tilde\XX\to\A^1$, $(x,y,t,u)\mapsto u$.
Since the local rings of $\XX$ are also local rings of $\tilde\XX$,
$\XX$ is a regular scheme.  On the other hand, $\XX\to\spec k$ is not
smooth at the point $x=y=t=0$.  Now let $X$ be the generic fiber of
the projection $\XX\to\A^1$, $(x,y,t)\mapsto t$.  Then $X$ is the
affine scheme $\spec K[x,y]/(y^2+xy-x^3-(t^p-u))$ and this is easily
seen to be smooth over $K$.

This shows that the minimal regular model $\XX\to\CC$ of $X\to\spec K$
need not be smooth over $k$.

If we are given $X\to\spec K$ and the regular model $\XX$ is not
smooth over $k$, we can remedy the situation by extending $k$.
%%   dlu 2012-Oct-26
%% Indeed, letting $\kbar$ denote the algebraic closure of $k$, since
%% $\kbar$ is perfect, the regular model $\overline\XX\to\overline\CC$ of
%% $X\times_k\kbar\to K\kbar$ is smooth.  
Indeed, let $\kbar$ denote the algebraic closure of $k$, and let 
$\overline\XX\to\overline\CC$ be the regular model of
$X\times_k\kbar\to K\kbar$.  Then since 
$\kbar$ is perfect, $\overline\XX$ is smooth over $\kbar$.  
%%   dlu 2012-Oct-26
It is clear that there is a
finite extension $k'$ of $k$ such that $\XXbar$ is defined over $k'$
and birational over $k'$ to $\XX$.  So replacing $k$ by $k'$ we that
the regular model $\XX\to\CC$ of $X\to\spec K$ is smooth over $k$.
%(In more vivid language, there is a finite extension $k'$ of $K$ such
%that the points of non-smoothness of $\XX$ become points of
%non-regularity of $\XX\times_kk'$ and these can be resolved by
%Lipman's theorem as above.)

We will generally assume below that $\XX$ is smooth over $k$.

%%   dlu 2012-Oct-26
%%   
\subsection{Correction to \cite{Ulmer11}}
In \cite{Ulmer11}, Lecture~2, \S2, p.~237, speaking of a
2-dimensional, separated, reduced scheme of finite type over a field
$k$, we say ``Such a scheme is automatically quasi-projective and is
projective if and only if it is complete.''  This is not correct in
general---we should also assume that $\XX$ is non-singular.  In fact,
when the ground field is finite, it suffices to assume that $\XX$ is
normal.  (See Fontana, \emph{Compositio Mathematica\/}, {\bf 31},
1975.)
%%   dlu 2012-Oct-26

\section{Structure of fibers}\label{s:fibers}
We write $\XX_v$ for the fiber of $\pi$ over the closed point $v$ of
$\CC$.  We already noted that the fibers $\XX_v$ are connected.

We next define certain multiplicities of components, following
\cite[Ch.~9]{BoschLutkebohmertRaynaudNM}.  Let $\XX_{v,i}$,
$i=1,\dots,r$ be the reduced irreducible components of $\XX_v$ and let
$\eta_{v,i}$ be the corresponding generic points of $\XX_v$.  Let
$\kbar_v$ be an algebraic closure of the residue field at $v$ and
write $\overline \XX_v$ for $\XX_v\times_{k_v}\kbar_v$ and
$\overline\eta_{v,i}$ for a point of $\overline\XX_v$ over
$\eta_{v,i}$.  We define the {\it multiplicity\/} of $X_{v,i}$ in
$X_v$ to be the length of the Artin local ring $\OO_{X_v,\eta_{v,i}}$,
and the {\it geometric multiplicity\/} of $X_{v,i}$ in $X_v$ to be the
length of $\OO_{\overline\XX_v,\overline\eta_{v,i}}$.  The geometric
multiplicity is equal to the multiplicity when the characteristic of
$k$ is zero and it is a power of $p$ times the multiplicity when the
characteristic of $k$ is $p>0$.

We write $\XX_v=\sum_{i}m_{v,i}\XX_{v,i}$ where $m_{v,i}$ is the
multiplicity of $\XX_{v,i}$ in $\XX_v$.  This is an equality of
Cartier divisors on $\XX$.  We define the {\it multiplicity\/} $m_v$
of the fiber $\XX_v$ to be the gcd of the multiplicities $m_{v,i}$.

% 0-dimensional example:  k=F(u), k[x]/(x^p-u) is irreducible, mult 1,
% but geometric mult p
The multiplicity $m_v$ divides the gcd of the geometric multiplicities
of the components of $\XX_v$ which in turn divides the index
$\delta_v$ of $\XX_v$.  In particular, if $X$ has a $K$-rational point
(so that $\XX\to\CC$ has a section) then for every $v$ we have
$m_v=1$.

We now turn to the combinatorial structure of the fiber $\XX_v$.  A
convenient reference for what follows is \cite[Ch.~9]{LiuAGAC}.

We write $D.D'$ for the intersection multiplicity of two divisors on
$\XX$.  It is known \cite[9.1.23]{LiuAGAC} that the intersection form
restricted to the divisors supported in a single fiber $\XX_v$ is
negative semi-definite, and that its kernel consists exactly of the
divisors which are rational multiples of the entire fiber.  (Thus if
the multiplicity of the fiber $\XX_v$ is $m_v$, then
$(1/m_v)\XX_v:=\sum_i(m_{v,i}/m_v)\XX_{v,i}$ generates the kernel of
the pairing.)

It is in principle possible to use this result to give a
classification of the possible combinatorial types of fibers (genera
and multiplicities of components, intersection numbers) for a fixed
value of $g_X$.  Up to a suitable equivalence relation, the set of
possibilities for a given value of $g_X$ is finite
\cite{ArtinWinters71}.  When $X$ is an elliptic curve and the residue
field is assumed perfect, this is the well-known Kodaira-N\'eron
classification.  For higher genus, the situation rapidly becomes
intractable.  We note that the list of possibilities can be strictly
longer when one does not assume that the residue field $k_v$ is
perfect.  See \cite{Szydlo04} for a complete analysis of the case
where $X$ is an elliptic curve.

\section{Leray spectral sequence}\label{s:Leray}
Fix a prime $\ell\neq\ch(k)$.  We consider the Leray spectral
sequence for $\pi:\XXbar\to\CCbar$ in $\ell$-adic cohomology.  The
$E_2$ term
$$E_2^{pq}=H^p(\CCbar,R^q\pi_*\Ql)$$
vanishes outside the range $0\le p,q\le2$ and so the only possibly
non-zero differentials are $d_2^{01}$ and $d_2^{02}$.  We will show
that these both vanish and so the sequence degenerates at $E_2$.

The differential $d_2^{01}$ sits in an exact sequence of low degree
terms that includes
$$H^0(\CCbar,R^1\pi_*\Ql)\overset{d_2^{01}}{\longrightarrow}
H^2(\CCbar,\pi_*\Ql)\to H^2(\XXbar,\Ql).$$ 
Now $\pi_*\Ql=\Ql$, and the edge morphism $H^2(\CCbar,\Ql)\to
H^2(\XXbar,\Ql)$ is simply the pull-back $\pi^*$.  If $i:\DD\into\XX$
is a multisection of degree $n$ and $j=\pi i$, then we have a
factorization of $j^*$:
$$H^2(\CCbar,\Ql)\to H^2(\XXbar,\Ql)\to H^2(\DDbar,\Ql)$$
and we have a trace map
$$H^2(\DDbar,\Ql)\to H^2(\CCbar,\Ql)$$
for the finite morphism $j:\DD\to\CC$.  The composition $j^*$ followed
by the trace map is just multiplication by $n$, which is injective,
and therefore $H^2(\CCbar,\Ql)\to H^2(\XXbar,\Ql)$ is also injective,
which implies that $d_2^{01}=0$.

Now consider $d_2^{02}$, which sits in an exact sequence
$$H^2(\XXbar,\Ql)\to
H^0(\CCbar,R^2\pi_*\Ql)\overset{d_2^{02}}{\longrightarrow}
H^2(\CCbar,R^1\pi_*\Ql).$$ 
We can deduce that it too is zero by using duality, or by the
following argument, which unfortunately uses terminology not
introduced until Chapter~\ref{ch:cohom}.  The careful reader will have
no trouble checking that there is no circularity.

Here is the argument: Away from the reducible fibers of $\pi$, the
sheaf $R^2\pi_*\Ql$ is locally constant of rank 1.  At a closed point
$\vbar$ of $\CCbar$ where the fiber of $\pi$ is reducible, the stalk
of $R^2\pi_*\Ql$ has rank $f_\vbar$, the number of components in the
fiber.  The cycle class of a component of a fiber in $H^2(\XXbar,\Ql)$
maps onto the corresponding section in $H^0(\CCbar,R^2\pi_*\Ql)$ and
so $H^2(\XXbar,\Ql)\to H^0(\CCbar,R^2\pi_*\Ql)$ is surjective.  This
implies that $d_2^{02}=0$ as desired.

For later use, we record the exact sequence of low degree terms
(where the zero on the right is because $d_2^{01}=0$):
\begin{equation}\label{eq:H1s}
0\to H^1(\CCbar,\Ql)\to H^1(\XXbar,\Ql)\to H^0(\CCbar,R^1\pi_*\Ql)\to0.
\end{equation}

We end this section by noting a useful property of $R^1\pi_*\Ql$
when $k$ is finite.

\begin{prop}\label{prop:middle-extension}
  Suppose that $k$ is finite, $\XX$ is a smooth, proper surface over
  $k$ equipped with a flat, generically smooth, proper morphism
  $\pi:\XX\to\CC$.  Let $\ell$ be a prime not equal to the
  characteristic of $k$ and let $\FF=R^1\pi_*\Ql$, a constructible
  $\ell$=adic sheaf on $\CC$.  If $j:\eta\into\CC$ is the inclusion of
  the generic point, then the canonical morphism $\FF\to j_*j^*\FF$ is
  an isomorphism.
\end{prop}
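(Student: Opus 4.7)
The plan is to verify that the canonical map $\FF\to j_*j^*\FF$ is an isomorphism on stalks. At the generic geometric point this is tautological, so the content lies at closed geometric points. Fix a closed geometric point $\bar v$ of $\CCbar$ lying above $v\in\CC$. Proper base change gives $\FF_{\bar v}=H^1(\XX_{\bar v},\Ql)$, while the fact that $j$ is the inclusion of the generic point gives $(j_*j^*\FF)_{\bar v}=H^1(X_{\bar\eta},\Ql)^{I_v}$ (using that the absolute Galois group of the fraction field of the strictly henselian local ring $\OO_{\CC,v}^{sh}$ is $I_v$). The canonical morphism identifies with the specialization map, so the whole proposition reduces to showing that for each $v$ the specialization map
$$H^1(\XX_{\bar v},\Ql)\longrightarrow H^1(X_{\bar\eta},\Ql)^{I_v}$$
is an isomorphism.

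To prove this, my approach is to pass to Tate modules of Picard schemes. Kummer theory applied to the (possibly singular) projective curve $\XX_{\bar v}$ identifies $H^1(\XX_{\bar v},\Ql(1))$ with $V_\ell\Pic^0(\XX_{\bar v})$, and standard facts identify $H^1(X_{\bar\eta},\Ql(1))$ with $V_\ell J_X$. After a Tate twist the specialization map becomes
$$V_\ell\Pic^0(\XX_{\bar v})\longrightarrow (V_\ell J_X)^{I_v}.$$
Two theorems from the theory of N\'eron models then deliver the isomorphism: Raynaud's theorem (see \cite[Ch.~9]{BoschLutkebohmertRaynaudNM}), exploiting regularity of $\XX$ and relative minimality of $\pi$, identifies $\Pic^0(\XX_{\bar v})$ with the identity component $\JJ_{X,\bar v}^0$ of the special fiber of the N\'eron model of $J_X$; and Grothendieck's theorem on N\'eron models identifies $V_\ell\JJ_{X,\bar v}^0$ with $(V_\ell J_X)^{I_v}$. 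The composite, which untwisted is the specialization map, is the desired isomorphism.

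The main obstacle is that both Raynaud's identification and Grothendieck's theorem on the Tate module are substantial inputs, and one must verify that the composite actually coincides with the specialization map (a functoriality check diagram-chased through the definitions of all the isomorphisms). A more hands-on alternative would be to work through the nearby cycles spectral sequence for $R\Psi_\pi\Ql$ on $\XX_{\bar v}$: since $\XX$ is regular and $\pi$ has relative dimension 1, the sheaf $R^1\Psi_\pi\Ql$ is supported at isolated singular points of the fiber, and the low-degree exact sequence relates $H^1(\XX_{\bar v},\Ql)$, $H^1(X_{\bar\eta},\Ql)$, and a direct sum of local contributions. One would then identify the image of $H^1(\XX_{\bar v},\Ql)$ with the inertia invariants by matching the local monodromy action against the combinatorics of the fiber described in Section~\ref{s:fibers}, exploiting the negative semi-definiteness of the intersection form on a fiber.
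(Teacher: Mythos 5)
Your reduction to stalks is correct: by proper base change $\FF_{\vbar}=H^1(\XX_{\vbar},\Ql)$, the stalk of $j_*j^*\FF$ at $\vbar$ is the group of inertia invariants in $\FF_{\etabar}$, and the canonical map is cospecialization. The route you then take, through Picard schemes and N\'eron models, is genuinely different from the one in the text, which gets surjectivity from Deligne's local invariant cycle theorem \cite[3.6.1]{Deligne80} and injectivity from a weight argument inside Deligne's own diagram (the potential kernel is trapped between a group pure of weight $2$ and $H^1$ of the special fiber, which is mixed of weight $\le 1$). But your middle step has a genuine gap: Raynaud's theorem identifying $\Pic^0$ of the relative curve over $\OO_v$ with the identity component of the N\'eron model requires that the gcd of the geometric multiplicities of the components of the closed fiber be $1$ \cite[9.5, Thm.~4]{BoschLutkebohmertRaynaudNM}. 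The proposition assumes nothing of the sort --- no section, no restriction on multiple fibers, no cohomological flatness --- and when that hypothesis fails the identification is false at the level of group schemes. In Raynaud's example recalled in Section~\ref{s:CohomFlatnessExample}, the closed fiber $\YY_{f,0}$ satisfies $h^0(\OO)=h^1(\OO)=2$, so $\Pic^0(\YY_{f,0})$ is two-dimensional, whereas the identity component of the special fiber of the N\'eron model of the one-dimensional Jacobian is one-dimensional.

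The strategy can be repaired, because the discrepancy between $\Pic^0(\XX_{\vbar})$ and $\JJ^0_{X,\vbar}$ always consists of connected unipotent groups and finite groups, which are invisible to $V_\ell$ for $\ell\neq p$; but that is exactly the content of the harder ``general case'' analysis (\cite[9.6]{BoschLutkebohmertRaynaudNM} and Raynaud's original article), which you would have to invoke and verify in place of the clean statement you cite. You would also need to discharge the compatibility you yourself flag: that the composite of the Kummer-theoretic identifications, Raynaud's map, and Grothendieck's isomorphism $(V_\ell J_X)^{I_v}\cong V_\ell\JJ^0_{X,\vbar}$ really is the cospecialization map. As written the proof is incomplete; either add the hypothesis that every fiber of $\pi$ has a component of geometric multiplicity one, or supply the general comparison of the Picard functor with the N\'eron model and check that its kernel and cokernel are unipotent-by-finite.
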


\begin{rem}
The same proposition holds if we let $j:U\into\CC$ be the inclusion of a
non-empty open subset over which $\pi$ is smooth.  Sheaves with this
property are sometimes called ``middle extension'' sheaves.  It is
also useful to note that for $\FF$ and $j$ as above, we have
an isomorphism 
$$H^1(\CCbar,\FF)\cong\im\left(H^1_c(\overline{U},j^*\FF)\to
  H^1(\overline{U},j^*\FF)\right)$$
(image of compactly supported cohomology in usual cohomology).  This
is ``well known'' but the only reference I know is \cite[7.1.6]{Ulmer05}.
\end{rem}

\begin{proof}[Proof of Proposition~\ref{prop:middle-extension}]
  We will show that for every geometric point $\xbar$ over a closed
  point $x$ of $\CC$, the stalks of $\FF$ and $j_*j^*\FF$ are
  isomorphic.  (Note that the latter is the group of invariants of
  inertia at $x$ in the Galois module $\FF_{\etabar}$.)

  The local invariant cycle theorem (Theorem 3.6.1 of
  \cite{Deligne80}) says that the map of stalks
  $\FF_\xbar\to(j_*j^*\FF)_\xbar$ is surjective.  A closer examination
  of the proof shows that it is also injective in our situation.
  Indeed, in the diagram (8) on p.~214 of \cite{Deligne80}, the group
  on the left $H^0(X_{\etabar})_I(-1)$ is pure of weight 2, whereas
  $H^1(X_s)$ is mixed of weight $\le1$; also the preceding term in the
  vertical sequence is (with $\Ql$ coefficients) dual to $H^3(X_s)(2)$
  and this vanishes for dimension reasons.  Thus the diagonal map
  $sp^*$ is also injective.

  (Note the paragraph after the display (8): this argument only works
  with $\Ql$ coefficients, not necessarily with $\Zl$ coefficients.)
\end{proof}

See \cite[Prop.~7.5.2]{KatzMMP} for a more general result with a
similar proof.

We have a slightly weaker result in integral cohomology.

\begin{cor}\label{cor:R1H1-cohom}
Under the hypotheses of the proposition, the natural map
$$R^1\pi_*\Zl\to j_*j^*R^1\pi_*\Zl$$
has kernel and cokernel supported at finitely many closed points of
$\CC$ and with finite stalks.  Thus the induced map
$$H^i(\CCbar,R^1\pi_*\Zl)\to H^i(\CCbar,j_*j^*R^1\pi_*\Zl)$$
is an isomorphism for $i>1$ and surjective with finite kernel for
$i=1$.
\end{cor}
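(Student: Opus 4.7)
The plan is to bootstrap from Proposition~\ref{prop:middle-extension} by pinning down the torsion stalk by stalk, then feed the resulting skyscraper kernel and cokernel through the long exact cohomology sequence on $\CCbar$. Write $\FF=R^1\pi_*\Zl$, $\GG=j_*j^*\FF$, and $\phi:\FF\to\GG$ for the canonical map, with kernel $\KK$ and cokernel $\mathcal{Q}$.

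First I would localize the support. Smooth and proper base change shows that $\FF$ is lisse on any open $U\subset\CC$ over which $\pi$ is smooth, and that inertia at a geometric point above a point of $U$ acts trivially on $\FF_\etabar$; so $\phi_\xbar:\FF_\xbar\to\FF_\etabar^{I_x}=\GG_\xbar$ is an isomorphism for $\xbar$ above a point of $U$, and hence $\KK$ and $\mathcal{Q}$ are supported on the finite complement $Z=\CC\setminus U$. Next I would finitize the stalks at points of $Z$. Since $\pi$ is proper, $\FF$ is a constructible $\Zl$-sheaf, so $\FF_\xbar$ and $\GG_\xbar\subseteq\FF_\etabar$ are finitely generated $\Zl$-modules, and so are $\KK_\xbar$ and $\mathcal{Q}_\xbar$. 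Tensoring $\phi_\xbar$ with $\Ql$ produces the corresponding stalk map of $R^1\pi_*\Ql\to j_*j^*R^1\pi_*\Ql$, which is an isomorphism by Proposition~\ref{prop:middle-extension}; thus $\KK_\xbar\otimes\Ql=\mathcal{Q}_\xbar\otimes\Ql=0$, and a finitely generated torsion $\Zl$-module is finite, which yields the first assertion.

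For the cohomological consequence, I would split the four-term sequence $0\to\KK\to\FF\to\GG\to\mathcal{Q}\to 0$ as $0\to\KK\to\FF\to\im\phi\to 0$ and $0\to\im\phi\to\GG\to\mathcal{Q}\to 0$. Because $\KK$ and $\mathcal{Q}$ are skyscrapers with finite stalks on $\CCbar$, $H^i(\CCbar,\KK)=H^i(\CCbar,\mathcal{Q})=0$ for $i\ge 1$ and $H^0$ is finite. The first sequence then gives $H^i(\CCbar,\FF)\cong H^i(\CCbar,\im\phi)$ for all $i\ge 1$; the second gives $H^2(\CCbar,\im\phi)\cong H^2(\CCbar,\GG)$ together with a surjection $H^1(\CCbar,\im\phi)\twoheadrightarrow H^1(\CCbar,\GG)$ whose kernel is a quotient of the finite group $H^0(\CCbar,\mathcal{Q})$. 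Concatenating yields the claim: an isomorphism for $i\ge 2$ and a surjection with finite kernel for $i=1$. The main obstacle is the $\Zl$-to-$\Ql$ compatibility at the stalks---identifying $\phi\otimes\Ql$ with the map of Proposition~\ref{prop:middle-extension}---which rests on continuity of the inertia action on the finitely generated $\Zl$-module $\FF_\etabar$ and on constructibility of $R^1\pi_*\Zl$ under the proper morphism $\pi$; everything else is formal.
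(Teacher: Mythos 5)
Your proposal is correct and follows essentially the same route as the paper: the kernel and cokernel are supported at the finitely many points where $\pi$ fails to be smooth, their stalks are finitely generated and become zero after tensoring with $\Ql$ by Proposition~\ref{prop:middle-extension}, hence are finite, and the cohomological statement then follows formally from the vanishing of higher cohomology of skyscraper sheaves on $\CCbar$. The paper's own proof is just a terser version of the same argument, leaving the long-exact-sequence bookkeeping implicit.
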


\begin{proof}
  The proof of the Proposition shows that the kernel and cokernel are
  supported at points where $\pi$ is not smooth, a finite set of
  closed points. Also, the Proposition shows that the stalks are
  torsion.  Since they are also finitely generated, they must be
  finite.
\end{proof}

\section{Cohomological flatness}

Since $\pi:\XX\to\CC$ is flat, general results on cohomology and base
change (e.g., \cite[III.12]{HartshorneAG}) imply that the coherent
Euler characteristic of the fibers of $\pi$ is constant, i.e., the
function $v\mapsto\chi(\XX_v,\OO_{\XX_v})$ is constant on $\CC$.
Moreover, the dimensions of the individual cohomology groups
$h^i(\XX_v,\OO_{\XX_v})$ are upper semi-continuous.  They are not in
general locally constant.

To make this more precise, we recall a standard exact sequence from
the theory of cohomology and base change:
$$0\to(R^i\pi_*\OO_\XX)\tensor_{\OO_{\CC}}\kappa(v)\to H^i(\XX_v,\OO_{\XX_v})
\to 
%%   dlu 2012-Oct-26
%%   (R^{i+1}\pi_*\OO_XX)[\varpi_v]
(R^{i+1}\pi_*\OO_\XX)[\varpi_v]
%%   dlu 2012-Oct-26
\to0.$$ Here $\XX_v$ is the fiber of
$\pi$ at $v$, the left-hand group is the fiber of the coherent sheaf
$R^i\pi_*\OO_\XX$ at $v$, and the right-hand group is the
$\varpi_v$-torsion in $R^{i+1}\pi_*\OO_\XX$ where $\varpi_v$ is a
generator of $\m_v$.  Since $R^i\pi_*\OO_\XX$ is coherent, the
function
$$v\mapsto\dim_{\kappa(v)}(R^i\pi_*\OO_\XX)\tensor_{\OO_{\CC}}\kappa(v)$$
is upper semi-continuous, and it is locally constant if and only if
$R^i\pi_*\OO_\XX$ is locally free.  Thus the obstruction to $v\mapsto
h^i(\XX_v,\OO_{\XX_v})$ being locally constant is controlled by
torsion in $R^i\pi_*\OO_\XX$ and $R^{i+1}\pi_*\OO_\XX$.

We say that $\pi$ is {\it cohomologically flat in dimension $i$\/} if
formation of $R^i\pi_*\OO_\XX$ commutes with arbitrary change of base,
i.e., if for all $\phi:T\to\CC$, the base change morphism
$$\phi^*R^i\pi_*\OO_\XX\to R^i\pi_T\OO_{\T\times_\CC\XX}$$
is an isomorphism.  Because the base $\CC$ is a smooth curve, this is
equivalent to the same condition where $T\to\CC$ runs through
inclusions of closed point, i.e., to the condition that
$$(R^i\pi_*\OO_\XX)\tensor_{\OO_{\CC}}\kappa(v)\to
H^i(\XX_v,\OO_{\XX_v})$$
be an isomorphism for all closed points $v\in\CC$.  By the exact
sequence above, this is equivalent to $R^{i+1}\pi_*\OO_\XX$ being
torsion free, and thus locally free.

Since $\pi:\XX\to\CC$ has relative dimension 1,
$R^{i+1}\pi_*\OO_\XX=0$ for $i\ge1$ and $\pi$ is automatically
cohomologically flat in dimension $i\ge1$.  It is cohomologically flat
in dimension 0 if and only if $R^1\pi_*\OO_\XX$ is locally free.
To lighten terminology, in this case we say simply that $\pi$ is
cohomologically flat.

Since $\pi_*\OO_\XX=\OO_\CC$ is free of rank 1, we have that $\pi$ is
cohomologically flat if and only if $v\mapsto h^i(\XX_v,\OO_{\XX_v})$
is locally constant (and thus constant since $\CC$ is connected) for
$i=0,1$.  Obviously the common value is 1 for $i=0$ and $g_X$ for
$i=1$.

Raynaud gave a criterion for cohomological flatness in
\cite[7.2.1]{Raynaud70}.  Under our hypotheses ($\XX$ regular, $\CC$ a
smooth curve over $k$, and $\pi:\XX\to\CC$ proper and flat with
$\pi_*\OO_\XX=\OO_\CC$), $\pi$ is cohomologically flat if $\ch(k)=0$
or if $k$ is perfect and the following condition holds: For each
closed point $v$ of $\CC$, let $d_v$ be the $\gcd$ of the geometric
multiplicities of the components of $\XX_v$.  The condition is that
$d_v$ is prime to $p=\ch(k)$ for all $v$.
%(?also mention ``\'etale quasi-section''
%i.e., sections over strict henselization?).
% if gcd = 1, pic is represented by a smooth separated group scheme

Raynaud also gave an example of non-cohomological flatness which we
will make more explicit below.  Namely, let $S$ be a complete DVR with
fraction field $F$ and algebraically closed residue field of
characteristic $p>0$, and let $\EE$ be an elliptic curve over $S$ with
either multiplicative or good supersingular reduction.  Let $Y$ be a
principal homogeneous space for $E=\EE\times \spec F$ of order $p^e$
($e>0$) and let $\YY$ be a minimal regular model for $Y$ over $S$.
Then $\YY\to\spec S$ is not cohomologically flat.  Moreover, the
invariant $\delta$ defined as above is $p^e$.  It follows from later
work \cite[Thm.~6.6]{LiuLorenziniRaynaud04} that the special fiber of
$\YY$ is like that of $\EE$, but with multiplicity $p^e$.  The
explicit example below should serve to make the meaning of this clear.

\section{Example}\label{s:CohomFlatnessExample}
Let $k=\F_2$ and $\CC=\P^1$ so that $K=\F_2(t)$.  We give an example
of a curve of genus 1 over $\A^1_K$ which is not cohomologically flat
at $t=0$.

Consider the elliptic curve $E$ over $K$ given by
$$y^2+xy=x^3+tx.$$
The point $P=(0,0)$ is 2-torsion.  The discriminant of this model is
$t^2$ so $E$ has good reduction away from the place $t=0$ of $K$.  At
$t=0$, $E$ has split multiplicative reduction with minimal regular
model of type $I_2$.

The quotient of $E$ by the subgroup generated by $P$ is $\phi:E\to
E'$, where $E'$ is given by
$$s^2+rs=r^3+t.$$
One checks that the degree of the conductor of $E'$ is 4 and so 
%%   dlu 2012-Oct-26
(by \cite{Ulmer11}, Lecture~1, Theorem~9.3 and Theorem~12.1(1))
%%   dlu 2012-Oct-26
the rank of $E'(K)$ is zero.  Also, $E'(K)$ has no 2-torsion.  

Therefore, taking cohomology of the sequence 
$$0\to E[\phi]\to E\to E'\to 0,$$
we find that the map
$$K/\wp(K)\cong H^1(K,\Z/2\Z)\cong H^1(E,E[\phi])\to H^1(K,E)$$
is injective.  For $f\in K$, we write $X_f$ for the torsor for $E$
obtained from the class of $f$ in $K/\wp(K)$ via this map.

Let $L$ be the quadratic extension of $K$ determined by $f$, i.e., 
$$L=K[u]/(\wp(u)-f).$$  
The action of $G=\gal(L/K)=\Z/2\Z$ on $L$ is $u\mapsto u+1$.  Let $G$
act on $K(E)$ by translation by $P$.  Explicitly, one finds
$$(x,y)\mapsto (t/x,t(x+y)/x^2)$$
and so $y/x\mapsto y/x+1$.

The function field of $X_f$ is the field of $G$-invariants in $L(E)$
where $G$ acts as above.  One finds that 
$$K(X_f)=\frac{K(r)[s,z]}{\left(s^2+rs+r^3+t,z^2+z+r+f\right)}$$
which presents $X_f$ as a double cover of $E'$.

Let us now specialize to $f=t^{-1}$ and find a minimal regular model of
$X_f$ over $R=\F_2[t]$.  Let
$$\UU=\spec \frac{R[r,s,w]}{\left(s^2+rs+r^3+t,w^2+tw+t^2r+t\right)}.$$ 
Then $\UU$ is regular away from $t=r=s=w=0$ and its generic fiber is
isomorphic (via $w=tz$) to an open subset of $X_f$. Let
$$\VV=\spec
\frac{R[r',s',w']}{\left(s'+r's'+r^{\prime3}+ts^{\prime3},w^{\prime2}+tw'+t^3r's'+t\right)}$$ 
which is a regular scheme whose generic fiber is another open subset
of $X_f$.  Let $\YY_f$ be the result of glueing $\UU$ and $\VV$ via 
$(r,s,w)=(r'/s',1/s',w'+tr^{\prime2}/s')$.  The generic fiber of
$\YY_f$ is $X_f$ and $\YY_f$ is regular away from $r=s=w=t=0$.  Note
that the special fiber of $\YY_f$ is isomorphic to the product of the
doubled point $\spec\F_2[w]/(w^2)$ with the projective nodal plane cubic
$$\proj\frac{\F_2[r,s,v]}{(s^2v+rsv+r^3)}.$$
In particular, the special fiber over $t=0$, call it
$\YY_{f,0}$, satisfies
$H^0(\YY_{f,0},\OO_{\YY_{f,0}})=\F_2[w]/(w^2)$.  
%%   dlu 2012-Oct-26
%%   The
This
%%   dlu 2012-Oct-26
shows that
$\YY_f$ is not cohomologically flat over $\CC$ at $t=0$.

To finish the example, we should blow up $\YY_f$ at its unique
non-regular point.  The resulting scheme $\XX_f$ is regular and flat
over $R$, but it is not cohomologically flat at $t=0$.  The fiber over
$t=0$ is the product of a doubled point and a N\'eron configuration of
type $I_2$, and its global regular functions are $\F_2[w]/(w^2)$.
This is in agreement with \cite[6.6]{LiuLorenziniRaynaud04}.

We will re-use parts of this example below.

\begin{exer}
  By the earlier discussion, $R^1\pi_*\OO_{\XX_f}$ has torsion
  at $t=0$.  Make this explicit.
\end{exer}

\chapter{Properties of $J_X$}\label{ch:J}

\section{Review of Picard functors}\label{s:pic}
We quickly review basic material on the relative Picard functor.  The
original sources \cite{GrothendieckFGA} and \cite{Raynaud70} are still
very much worth reading.  Two excellent modern references with more
details and historical comments are \cite{Kleiman05} and
\cite{BoschLutkebohmertRaynaudNM}.

\subsection{The relative Picard functor}
For any scheme $\YY$, we write $\Pic(\YY)$ for the {\it Picard
  group\/} of $\YY$, i.e., for the group of isomorphism classes of
invertible sheaves on $\YY$.  This group can be calculated
cohomologically: $\Pic(\YY)\cong H^1(\YY,\OO_\YY^\times)$ (cohomology
computed in the Zariski, \'etale, or finer topologies).

Now fix a morphism of schemes $f:\YY\to\SS$ 
%%   dlu 2012-Oct-26
%%   (as always, separated and of finite type).
(separated and of finite type, as always).
%%   dlu 2012-Oct-26
If $\TT\to\SS$ is a morphism of schemes, we write
$\YY_\TT$ for $\YY\times_\SS\TT$ and $f_\TT$ for the projection
$\YY_\TT\to\TT$.  Define a functor $P_{\YY/\SS}$ from schemes over
$\SS$ to abelian groups by the rule
$$\TT\mapsto P_{\YY/\SS}(\TT)=\frac{\Pic(\YY_\TT)}{f_\TT^*\Pic(\TT)}.$$

We define the {\it relative Picard functor\/} $\Picf_{\YY/\SS}$ to be
the fppf sheaf associated to $P_{\YY/\SS}$.  (Here ``fppf'' means
``faithfully flat and finitely presented''.  See
\cite[8.1]{BoschLutkebohmertRaynaudNM} for details on the process of
sheafification.)  Explicitly, if $\TT$ is affine, an element of
$\Picf_{\YY/\SS}(\TT)$ is represented by a line bundle
$\xi'\in\Pic(\YY\times_\SS\TT')$ where $\TT'\to\TT$ is fppf, subject
to the condition that there should exist an fppf morphism
$\tilde\TT\to\TT'\times_\TT\TT'$ such that the pull backs of $\xi'$
via the two projections
$$\tilde\TT\to\TT'\times_\TT\TT'\rightrightarrows\TT'$$
are isomorphic.  Two such elements $\xi_i\in\Pic(\YY\times_\SS\TT'_i)$
represent the same element of $\Picf_{\YY/\SS}(\TT)$ if and only if
there is an fppf morphism 
%%   dlu 2012-Oct-26
%%   $\tilde\TT\to\TT'_1\times_\TT\TT_2$
$\tilde\TT\to\TT'_1\times_\TT\TT'_2$
%%   dlu 2012-Oct-26
such that the pull-backs of the $\xi_i$ to $\tilde\TT$ via the two
projections are isomorphic.  Fortunately, under mild hypotheses, this
can be simplified quite a bit!

Assume that $f_*\OO_\YY=\OO_\SS$ universally.  This means that for
all $\TT\to\SS$, we have $f_{\TT*}\OO_{\YY_\TT}=\OO_\TT$.
Equivalently, $f_*\OO_\YY=\OO_\SS$ and $f$ is cohomologically flat in
dimension 0.  In this case, for all $\TT\to\SS$, we have an exact
sequence
\begin{equation}\label{eq:low-degree}
0\to\Pic(\TT)\to\Pic(\YY_\TT)\to\Picf_{\YY/\SS}(\TT)
\to\Br(\TT)\to\Br(\YY_\TT).
\end{equation}
(This is the exact sequence of low-degree terms in the Leray spectral
sequence for $f:\YY_\TT\to\TT$, computed with respect to the fppf
topology.)  Here $\Br(\TT)=H^2(\TT,\OO_\TT^\times)$ and
$\Br(\YY_\TT)=H^2(\YY_\TT,\OO_{\YY_\TT}^\times)$ are the cohomological
Brauer groups of $\TT$ and $\YY_\TT$, again computed with the fppf
topology.  (It is known that the \'etale topology gives the same
groups.)  See \cite[8.1]{BoschLutkebohmertRaynaudNM} for the
assertions in this paragraph and the next.

In case $f$ has a section, we get a short exact sequence
$$0\to\Pic(\TT)\to\Pic(\YY_\TT)\to\Picf_{\YY/\SS}(\TT)\to0$$
and so in this case 
$$\Picf_{\YY/\SS}(T)=P_{\YY/\SS}(T)=\frac{\Pic(\YY_\TT)}{f_\TT^*\Pic(\TT)}.$$

\subsection{Representability and $\Pic^0$ over a field}\label{ss:Pic/field}
The simplest representability results will be sufficient for many of
our purposes.  

To say that $\Picf_{\YY/\SS}$ is represented by a scheme
$\Pic_{\YY/\SS}$ means that for all $S$-schemes $\TT$,
$$\Picf_{\YY/\SS}(\TT)=\Pic_{\YY/\SS}(\TT)=\mor_\SS(\TT,\Pic_{\YY/\SS}).$$

Suppose that $\SS$ is the spectrum of a field and that $\YY\to\SS$ is
proper.  Then $\Picf_{\YY/\SS}$ is represented by a scheme
$\Pic_{\YY/\SS}$ which is locally of finite type over $\SS$ \cite[8.2,
Thm.~3]{BoschLutkebohmertRaynaudNM}.  The connected component of this
group scheme will be denoted $\Pic^0_{\YY/\SS}$.  If $\YY\to\SS$ is
smooth and geometrically irreducible, then $\Pic^0_{\YY/\SS}$ is
proper \cite[8.4, Thm.~3]{BoschLutkebohmertRaynaudNM}.

The results of the previous paragraph apply in particular to $X/K$ and
$\XX/k$.  Moreover, since $X/K$ is a curve, $H^2(X,\OO_X)=0$ and so
$\Pic^0_{X/K}$ is smooth and hence an abelian variety \cite[8.4,
Prop.~2]{BoschLutkebohmertRaynaudNM} (plus our assumption that
morphisms are of finite type to convert formal smoothness into
smoothness) or \cite[9.5.19]{Kleiman05}.

In general $\Pic_{\XX/k}$ need not be reduced and so need not be
smooth over $k$.  If $k$ has characteristic zero or
$H^2(\XX,\OO_\XX)=0$, then $\Pic^0_{\XX/k}$ is again an abelian
variety.  We define
$$\Picvar_{\XX/k}=\left(\Pic^0_{\XX/k}\right)_{red}$$
the {\it Picard variety\/} of $\XX$, which is an abelian variety.
See \cite{Serre58} and \cite{MumfordLCAS} for an analysis of
non-reduced Picard schemes and \cite{Liedtke09} for more in the case
of surfaces.

Since we have assumed that $k$ is large enough that $\XX$ has a
rational point, for all $k$-schemes $\TT$, we have
$$\Pic^0_{\XX/k}(\TT)=\frac{\Pic^0(\XX_\TT)}{\pi_T^*\Pic^0(\TT)}.$$

\subsection{More general bases}
For any $\YY\to\SS$ which is proper, define $\Picf^0_{\YY/\SS}$ to be
the subfunctor of $\Picf_{\YY/\SS}$ consisting of elements whose
restrictions to fibers $\YY_s$, $s\in\SS$, lie in
$\Pic^0_{\YY_s/\kappa(s)}$.  

We need a deeper result to handle $\XX\to\CC$.  Namely, assume that
$\YY$ is regular, $\SS$ is 1-dimensional and regular, $f:\YY\to\SS$ is
flat, and projective of relative dimension 1 such that
$f_*\OO_\SS=\OO_\CC$.  Over the open subset of $\SS$ where $f$ is
smooth, $\Picf_{\YY/\SS}^0$ is represented by an abelian scheme
\cite[9.4 Prop.~4]{BoschLutkebohmertRaynaudNM}.  

Over all of $\SS$, we cannot hope for reasonable representability
results unless we make further hypotheses on $f$.  If we assume that
each fiber of $f$ has the property that the gcd of the geometric
multiplicities of its irreducible components is 1, then
$\Picf_{\YY/\SS}^0$ is represented by a separated $\SS$-scheme
\cite[9.4 Thm.~2]{BoschLutkebohmertRaynaudNM}.

\subsection{Example}
Here is an explicit example where an element of $\Pic_{X/K}(K)$ is not
represented by an element of $\Pic(X)$.

Let $E/K$ be the elliptic curve of
Section~\ref{s:CohomFlatnessExample} and let $X_f$ be the homogeneous
space for $E$ as above, where $f\in K$ will be selected later.  Then
$E$ is the Jacobian of 
%%   dlu 2012-Oct-26
%%   $X$ and so by the results quoted in Subsection~\ref{ss:Pic/field}
$X$, and so by the results quoted in Subsection~\ref{ss:Pic/field},
%%   dlu 2012-Oct-26
$E$ represents the functor
$\Picf_{X/K}^0$.  Let us consider the point $P=(0,0)$ above in
$E(K)=\Pic^0_{X/K}(K)$ and show that for many choices of $f$, this
class is not represented by an invertible sheaf on $X=X_f$.
Equivalently we want to show that $P$ does not go to 0 in $\Br(K)$ (in
the sequence~\eqref{eq:low-degree} above).

The image of $P$ in $\Br(K)$ is given by a pairing studied by
Lichtenbaum \cite{Lichtenbaum69}.  More precisely, consider the image
of $P$ under the coboundary
$$E(K)\to H^1(K,E'[\phi^\vee])\cong H^1(K,\mu_2)\cong
K^\times/K^{\times2}.$$ 
By Kramer's results on 2-descent in characteristic 2
\cite[1.1b]{Kramer77}, the image of $P$ is the class of $t$.  On the
other hand, suppose that $X=X_f$ is the torsor for $E$ corresponding
to $f\in K/\wp(K)$.  Then the local invariant at $v$ of the image of
$P$ in $\Br(K)$ is given by
$$\frac12\res_v\left(f\frac{dt}t\right)\in\frac12\Z/\Z\subset\Br(K_v).$$
Thus, for example, if we take $f=1/(t-1)$, then we get an element of
the Brauer group ramified at 1 and $\infty$ and so the class of $P$
does not come from $\Pic(X_f)$.  

%[possible interesting pathologies over $\CC$, i.e., integrally, later]

\section{The Jacobian}
By definition, the Jacobian $J_X$ of $X$ is the group scheme
$\Pic^0_{X/K}$.  Because $X$ is a smooth, projective curve, $J_X$ is
an abelian variety of dimension $g_X$ where $g_X$ is the genus of $X$,
and it is equipped with a canonical principal polarization given by
the theta divisor.  We refer to \cite{Milne86jv} for more on the basic
properties of $J_X$, including the Albanese property and autoduality.

\section{The $K/k$ trace of $J_X$}
Recall that $(B,\tau)$ denotes the $K/k$ trace of $J_X$, which by
definition is a final object in the category of pairs $(A,\sigma)$
where $A$ is a $k$-abelian variety and $\sigma:A\times_kK\to J_X$ is a
$K$-morphism of abelian varieties.

We refer to \cite{Conrad06} for a discussion in modern language of the
existence and basic properties of $(B,\tau)$.  In particular, Conrad
proves that $(B,\tau)$ exists, it has good base change properties, and
(when $K/k$ is regular, as it is in our case) $\tau$ has finite
connected kernel with connected Cartier dual.  In particular, $\tau$
is purely inseparable.

\section{The Lang-N\'eron theorem}\label{s:LangNeron}
Define the Mordell-Weil group $\MW(J_X)$ as
$$\MW(J_X)=\frac{J_X(K)}{\tau B(k)}$$
where as usual $(B,\tau)$ is the $K/k$ trace of $J_X$.  (In view of
the theorem below, perhaps this would be better called the
Lang-N\'eron group.)

Generalizing the classical Mordell-Weil theorem, we have the following
finiteness result, proven independently by Lang and N\'eron.

\begin{thm}
$\MW(J_X)$ is a finitely generated abelian group.
\end{thm}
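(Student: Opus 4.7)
The plan is to adapt the classical Mordell--Weil descent to the function-field setting, with the quotient by $\tau B(k)$ absorbing precisely the subgroup of $J_X(K)$ that is potentially infinite as a ``constant'' contribution. Fix an integer $m \geq 2$ with $\gcd(m, \ch(k)) = 1$ (permissible since we may enlarge $k$ finitely and doing so only enlarges $\tau B(k)$, so finite generation after extension implies finite generation for $k$ itself). The proof proceeds in three steps: (i) weak Mordell--Weil, (ii) canonical height theory, (iii) descent.

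For step (i), after further enlarging $k$ so that $J_X[m] \subset J_X(K)$ and $B[m] \subset B(k)$, the Kummer sequence provides an injection
\[
J_X(K)/m\, J_X(K) \hookrightarrow H^1(K, J_X[m]) \cong \Hom(G_K, (\Z/m\Z)^{2g_X}).
\]
Spreading-out arguments using the N\'eron model $\JJ_X$ show that classes in the image are unramified outside the finite set of places $v$ of bad reduction, and that the local ramification at bad places is constrained to a finite subgroup. The image therefore lies in a subquotient of the $H^1$ of the \'etale fundamental group of an open subcurve of $\CC$; modulo finite groups, the only infinite contribution to this $H^1$ comes from characters of $G_K$ that factor through $G_k$, and such ``constant'' classes are exactly the image of $B(K)/m B(K)$ under the Kummer map of $B$. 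After quotienting by $\tau B(k)$, the residue is finite, so $\MW(J_X)/m\, \MW(J_X)$ is finite.

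For step (ii), fix a symmetric ample line bundle $\mathcal{L}$ on $J_X$ and define a canonical N\'eron--Tate height $\hat h_{\mathcal{L}}: J_X(K) \to \R$ as the usual limit of naive heights (built from local intersection numbers on $\JJ_X$ at each place of $\CC$). It is a quadratic form. The decisive input is the \emph{kernel identification}: $\hat h_{\mathcal{L}}(P) = 0$ if and only if $P \in \tau B(k) + J_X(K)_{\mathrm{tors}}$. Granting this, $\hat h_{\mathcal{L}}$ descends to a positive-definite quadratic form on $(\MW(J_X)/\mathrm{torsion}) \otimes \R$. For step (iii), combine this with step (i) by standard infinite descent: for $P \in J_X(K)$ write $P = m Q + R_i$ with $R_i$ in a fixed set of coset representatives for $\MW(J_X)/m\,\MW(J_X)$; then $\hat h_{\mathcal{L}}(Q) \leq m^{-2} \hat h_{\mathcal{L}}(P) + O(1)$, and iterating shows that $\MW(J_X)$ is generated by the $R_i$ together with the finitely many classes of canonical height below a fixed bound.

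The main obstacle is the kernel identification in step (ii): proving that $\hat h_{\mathcal{L}}$ vanishes exactly on the sum of the torsion subgroup and the image of the trace. This is the heart of the Lang--N\'eron theorem; it requires the full theory of the $K/k$-trace from \cite{Conrad06} to isolate the ``constant'' part of $J_X$, together with a careful analysis showing that for any $P \in J_X(K) \setminus (\tau B(k) + J_X(K)_{\mathrm{tors}})$ at least one place of $\CC$ contributes a strictly positive local height. By contrast, step (i) is relatively routine (essentially Kummer theory combined with finiteness of unramified cohomology with finite coefficients on an open subcurve of $\CC$, modulo constants), and step (iii) is entirely formal once (i) and (ii) are in hand.
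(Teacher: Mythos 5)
Your outline follows the same route the paper itself indicates: the text gives no proof, deferring to Conrad's article, and describes that proof exactly as your steps (i)--(iii) (finiteness of $\MW/m$ via cohomology, then heights). (The paper also records a second route, via the Shioda--Tate isomorphism together with Kleiman's theorem that $\NS(\XX)$ is finitely generated, which you do not use.) The architecture is therefore right, and you have correctly placed the $K/k$-trace in step (i). But there is a genuine gap in step (iii), which is not ``entirely formal once (i) and (ii) are in hand.'' Positive-definiteness of $\hat h_{\mathcal{L}}$ on $(\MW(J_X)/\mathrm{tors})\otimes\R$ --- i.e., the kernel identification --- does \emph{not} yield the finiteness of the set of classes of canonical height below a fixed bound, and that finiteness is what the descent actually consumes. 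An abelian group of infinite rank can carry a positive definite quadratic form with infinitely many vectors of norm $1$ (take $\bigoplus_{i\ge1}\Z e_i$ with the standard form); finite generation is exactly what is not yet known at that stage, so you cannot invoke it. The statement your descent needs is the Northcott-type assertion that the points of $J_X(K)$ of height $\le C$ fall into finitely many cosets of $\tau B(k)$. In the Lang--N\'eron theory this, not the kernel identification, is the heart of the matter (it is proved via bounded families / Chow coordinates, and the kernel identification is a corollary of it rather than conversely).

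Your formal route can be rescued, but only with an extra input you do not supply: the canonical height is $\frac1N\Z$-valued for a fixed $N$ (visible from its intersection-theoretic description on $\XX$), and then, if there were infinitely many bounded-height classes, pigeonholing into a single coset of $m\,\MW(J_X)$ and repeatedly dividing differences by $m$ would produce nonzero classes of height $<1/N$, a contradiction. Without either that argument or the bounded-family argument, the finiteness you assert in step (iii) is unproved. A smaller point: $J_X[m]\subset J_X(K)$ cannot be arranged by enlarging $k$ alone; one must pass to a finite extension $K'/K$, and the reduction of finite generation from $K'$ to $K$ is itself not free, because the $K'/k'$-trace may be strictly larger than $B\times_k k'$ and one must check that $J_X(K)\cap\tau'B'(k')$ exceeds $\tau B(k)$ only by a finite group.
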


Note that when $k$ is finitely generated, $B(k)$ is finitely generated
as well, and so $J_X(K)$ is itself a finitely generated abelian
group. For large fields $k$, $B(k)$ may not be finitely generated, and
so it really is necessary to quotient by $\tau B(k)$ in order to get a
finitely generated abelian group.

We refer to \cite{Conrad06} for a proof of the theorem in modern
language.  Roughly speaking, the proof there follows the general lines
of the usual proof of the Mordell-Weil theorem for an abelian variety
over a global field:  One shows that $\MW(J_X)/n$ is finite by embedding
it in a suitable cohomology group, and then uses a theory of heights to
deduce finite generation of $\MW(J_X)$.  

Another proof proceeds by relating $\MW(J_X)$ to the N\'eron-Severi
group $\NS(\XX)$ (this is the Shioda-Tate isomorphism discussed in the
next chapter) and then proving that $\NS(\XX)$ is finitely generated.
The latter was proven by Kleiman in \cite[XIII]{SGA6}.

\chapter{Shioda-Tate and heights}\label{ch:ST}

\section{Points and curves}
We write $\divr(\XX)$ for the group of (Cartier or Weil) divisors on
$\XX$ and similarly with $\divr(X)$.  A prime divisor on $\XX$ is
\emph{horizontal} if it is flat over $\CC$ and \emph{vertical} if it
is contained in a fiber of $\pi$.  The group $\divr(\XX)$ is the
direct sum of its subgroups $\divr^{hor}(\XX)$ and $\divr^{vert}(\XX)$
generated respectively by horizontal and vertical prime divisors.

Restriction of divisors to the generic fiber of $\pi$ induces a
homomorphism
$$\divr(\XX)\to\divr(X)$$
whose kernel is $\divr^{vert}(\XX)$ and which induces an isomorphism
$\divr^{hor}(\XX)\cong\divr(X)$.  The inverse of this isomorphism
sends a closed point of $X$ to its scheme-theoretic closure in $\XX$.

We define a filtration of $\divr(\XX)$ by declaring that
$L^1\divr(\XX)$ be the subgroup of $\divr(\XX)$ consisting of divisors
whose restriction to $X$ has degree 0, and by declaring that
$L^2\divr(\XX)=\divr^{vert}(\XX)$.

We define $L^i\Pic(\XX)$ to be the image of $L^i\divr(\XX)$ in
$\Pic(\XX)$.  Also, recall that $\Pic^0(\XX)=\Pic^0_{\XX/k}(k)$ is the
group of invertible sheaves which are algebraically equivalent to zero
(equivalence over $\kbar$ as usual).

Recall that the N\'eron-Severi group $\NS(\XXbar)$ is
$\Pic(\XXbar)/\Pic^0(\XXbar)$ and $\NS(\XX)$ is by definition the
image of $\Pic(\XX)$ in $\NS(\XXbar)$.  We define $L^i\NS(\XX)$ as the
image of $L^i\Pic(\XX)$ in $\NS(\XX)$.

It is obvious that $\NS(\XX)/L^1\NS(\XX)$ is an infinite cyclic group;
as generator we may take the class of a horizontal divisor of total
degree $\delta$ over $\CC$, where $\delta$ is the index of $X$.

Recall that the intersection pairing restricted to the group generated
by the components of a fiber $\XX_v$ is negative semi-definite, with
kernel $(1/m_v)\XX_v$.  From this one deduces that $L^2\NS(\XX)$ is
the group generated by the irreducible components of fibers, with
relations $\XX_v=\XX_{v'}$ for any two closed points $v$ and $v'$.
Note that if for some $v\neq v'$ we have $m=\gcd(m_v,m_{v'})>1$, then
$L^2\NS(\XX)$ has non-trivial torsion: $m((1/m)\XX_v-(1/m)\XX_{v'})=0$
in $\NS(\XX)$.

Summarizing the above, we have that $\NS(\XX)/L^1\NS(\XX)$ is infinite
cyclic, and $L^2\NS(\XX)$ is finitely generated of rank
$1+\sum_v(f_v-1)$, where $f_v$ is the number of irreducible components
of $\XX_v$.  Also, $L^2\NS(\XX)$ is torsion free if and only if
$\gcd(m_v,m_{v'})=1$ for all $v\neq v'$.

The interesting part of $\NS(\XX)$, namely $L^1\NS(\XX)/L^2\NS(\XX)$,
is the subject of the Shioda-Tate theorem.

\section{Shioda-Tate theorem}\label{s:S-T}
Write $\divr^0(X)$ for the group of divisors on $X$ of degree 0.
Restriction to the generic fiber gives a homomorphism
$L^1\divr(\XX)\to\divr^0(X)$ which descends to a homomorphism 
$L^1\Pic(\XX)\to J_X(K)=\Pic^0_{X/K}(K)$.  The Shioda-Tate theorem
uses this map to describe $L^1\NS(\XX)/L^2\NS(\XX)$.

Recall that $(B,\tau)$ is the $K/k$-trace of $J_X$.

\begin{prop}[Shioda-Tate]\label{prop:S-T}
The map above induces a homomorphism
$$\frac{L^1\NS(\XX)}{L^2\NS(\XX)}\to \MW(J_X)=\frac{J_X(K)}{\tau
  B(k)}$$
with finite kernel and cokernel.  In particular the two sides have the
same rank as finitely generated abelian groups.  This homomorphism is
an isomorphism if $X$ has a $K$-rational point and $k$ is either finite
or algebraically closed
\end{prop}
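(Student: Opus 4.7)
My plan is to build the map in three stages---from divisors to line bundle classes, from line bundles to Mordell--Weil classes, and finally modding out by $\Pic^0(\XX)$---computing the kernel and cokernel of each stage.

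Starting from the isomorphism $L^1\divr(\XX)/L^2\divr(\XX) \cong \divr^0(X)$ noted in the text, I first pass to line bundles. If $D\in L^1\divr(\XX)$ restricts to a principal divisor $\dvsr(f)$ on $X$, then since $k(\XX)=K(X)$ the function $f$ has a well-defined divisor $\dvsr_\XX(f)$ on $\XX$, and $D-\dvsr_\XX(f)$ restricts to zero, hence is vertical. Conversely, any degree-zero divisor on $X$ is the restriction of its scheme-theoretic closure on $\XX$, which already lies in $L^1\divr(\XX)$. This gives
\begin{equation*}
0 \to L^2\Pic(\XX) \to L^1\Pic(\XX) \to \Pic^0(X) \to 0.
\end{equation*}
The low-degree sequence~\eqref{eq:low-degree} for $X\to\spec K$ (noting $\Pic(K)=0$) then embeds $\Pic^0(X)\into J_X(K)$ with cokernel $C$ a finite, bounded-exponent subgroup of $\Br(K)$ governed by the period $\delta'$; in particular $C=0$ when $X(K)\ne\emptyset$.

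To pass to N\'eron--Severi, I observe that $\Pic^0(\XX)\subset L^1\Pic(\XX)$, since classes algebraically equivalent to zero restrict to degree-zero classes on the generic fiber. Applying the snake lemma to the inclusion $\Pic^0(\XX)\into L^1\Pic(\XX)$ refined by its intersection with $L^2\Pic(\XX)$ yields
\begin{equation*}
0 \to L^2\NS(\XX) \to L^1\NS(\XX) \to \Pic^0(X)/I_0 \to 0,
\end{equation*}
where $I_0$ is the image of $\Pic^0(\XX)$ in $\Pic^0(X)$ under restriction. Writing $I\subset J_X(K)$ for the image of $I_0$ (which equals $I_0$ under the injection $\Pic^0(X)\into J_X(K)$), we obtain an injection $L^1\NS(\XX)/L^2\NS(\XX)\into J_X(K)/I$ whose cokernel is a quotient of $C$, hence finite.

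The heart of the argument---and the main obstacle---is to identify $I$ with $\tau B(k)$ up to finite index, so that the further surjection $J_X(K)/I\onto\MW(J_X)$ has finite kernel. Restriction to the generic fiber gives a $K$-morphism of abelian varieties $(\Picvar_{\XX/k})_K\to J_X$ (using that $\Pic^0(\XX)=\Picvar_{\XX/k}(k)$, since $k$-points always factor through the reduced subscheme). By the universal property of the $K/k$-trace this factors uniquely as $(\Picvar_{\XX/k})_K\to B_K\labeledto{\tau}J_X$, producing a $k$-morphism $\varphi:\Picvar_{\XX/k}\to B$ of abelian varieties and forcing $I\subset\tau B(k)$. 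The reverse containment up to finite index requires $\varphi$ to be surjective, which is essentially Chow's theorem realizing $B$ as a quotient of the Picard variety of a smooth projective $k$-model of $X$. Over algebraically closed $k$, the surjectivity of $\varphi$ transfers to $k$-points automatically; over finite $k$, Lang's theorem gives surjectivity on $k$-points modulo the finite group $H^1(k,\ker\varphi)$. Combining all the steps yields the finite-kernel/finite-cokernel statement in general, and under $X(K)\ne\emptyset$ together with $k$ finite or algebraically closed both obstructions vanish and the map is an isomorphism; finite generation of $\MW(J_X)$ (Lang--N\'eron) then gives the rank equality.
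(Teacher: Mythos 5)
Your three-stage decomposition is essentially the paper's own proof: the identification $L^1\Pic(\XX)/L^2\Pic(\XX)\cong\Pic^0(X)$, the control of $\Pic^0(X)\to J_X(K)$ via the Brauer obstruction in the sequence \eqref{eq:low-degree}, and the identification of the image of $\Pic^0(\XX)$ in $J_X(K)$ with $\tau B(k)$ up to finite index via a surjection $\varphi:\Picvar_{\XX/k}\to B$ are exactly the steps taken there. (The paper establishes the surjectivity of $\varphi$ by a divisibility argument over $\kbar$, or via the sequence \eqref{eq:H1s}, rather than citing Chow; that difference is cosmetic.)

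There is, however, a gap in how you pass from surjectivity of $\varphi$ to finiteness of the cokernel of $\Picvar_{\XX/k}(k)\to B(k)$. You treat only $k$ algebraically closed and $k$ finite and then assert the finite-kernel/finite-cokernel statement ``in general''; but the first assertion of the proposition is for an arbitrary base field, and for general $k$ the group $H^1(k,\ker\varphi)$ that controls your cokernel need not be finite (when $\ker\varphi$ is an abelian variety this is a Weil--Ch\^atelet-type group, infinite already over number fields). The paper's fix is elementary and field-independent: for any surjection $\phi:A\to A'$ of abelian varieties, choose a complement $A''$ of $\ker\phi$ up to isogeny and compose the isogeny $A''\to A'$ with its dual to see that $\coker(A(k)\to A'(k))$ is killed by an integer. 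A similar caution applies to your group $C$: as a subquotient of $\Br(K)$ it is only shown to be killed by $\delta$ (or $\delta'$), not to be finite. Finiteness of the total cokernel is then deduced not piece by piece but from the fact that a bounded-exponent quotient of the finitely generated group $\MW(J_X)$ (Lang--N\'eron) is finite. Finally, for the isomorphism statement over finite $k$ you need $\ker\varphi$ to be connected before Lang's theorem makes $H^1(k,\ker\varphi)$ vanish rather than merely finite; this is supplied by Proposition~\ref{prop:explicitB}, which identifies the kernel with $\Pic^0_{\CC/k}$.
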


Taking into account what we know about $\NS(\XX)/L^1\NS(\XX)$ and
$L^2\NS(\XX)$, we have a formula relating the ranks of $\NS(\XX)$ and
$\MW(J_X)$.

\begin{cor}\label{cor:STcor}
$$\rk\NS(\XX)=\rk \MW(J_X) +2+\sum_v(f_v-1)$$
\end{cor}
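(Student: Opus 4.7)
The plan is to obtain the rank formula by unwinding the two-step filtration $L^2\NS(\XX)\subset L^1\NS(\XX)\subset\NS(\XX)$ and invoking additivity of rank in short exact sequences of finitely generated abelian groups, using the three rank computations that have already been assembled in the text immediately preceding Proposition~\ref{prop:S-T}.

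First I would record the short exact sequences
\begin{equation*}
0\to L^1\NS(\XX)\to\NS(\XX)\to\NS(\XX)/L^1\NS(\XX)\to 0
\end{equation*}
and
\begin{equation*}
0\to L^2\NS(\XX)\to L^1\NS(\XX)\to L^1\NS(\XX)/L^2\NS(\XX)\to 0,
\end{equation*}
so that $\rk\NS(\XX)=\rk L^2\NS(\XX)+\rk\bigl(L^1\NS(\XX)/L^2\NS(\XX)\bigr)+\rk\bigl(\NS(\XX)/L^1\NS(\XX)\bigr)$.

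Next I would collect the three rank contributions. The quotient $\NS(\XX)/L^1\NS(\XX)$ is infinite cyclic (generated by any horizontal divisor class of total degree $\delta$ over $\CC$), contributing $1$. The group $L^2\NS(\XX)$ was shown to be finitely generated of rank $1+\sum_v(f_v-1)$: one relation among the fiber components comes from the fact that all fibers are linearly equivalent, and the kernel of the intersection pairing on each fiber is one-dimensional (spanned by $(1/m_v)\XX_v$), accounting for the $+1$ outside the sum. Finally, Proposition~\ref{prop:S-T} (Shioda-Tate) gives a homomorphism $L^1\NS(\XX)/L^2\NS(\XX)\to\MW(J_X)$ with finite kernel and cokernel, so these two groups have equal rank, namely $\rk\MW(J_X)$.

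Adding the three contributions yields
\begin{equation*}
\rk\NS(\XX)=\bigl(1+\sum_v(f_v-1)\bigr)+\rk\MW(J_X)+1=\rk\MW(J_X)+2+\sum_v(f_v-1),
\end{equation*}
which is the desired identity. There is no real obstacle here since all the hard work has been done in the statement of Proposition~\ref{prop:S-T} and in the preparatory discussion of $\NS(\XX)/L^1\NS(\XX)$ and $L^2\NS(\XX)$; the only thing to verify is that the possible torsion in $L^2\NS(\XX)$ (arising when some $\gcd(m_v,m_{v'})>1$) and the finite kernel/cokernel from Shioda-Tate do not affect the rank computation, which is immediate from additivity of rank modulo torsion.
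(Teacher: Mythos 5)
Your proof is correct and is exactly the argument the paper intends: the corollary is stated with only the remark that it follows by ``taking into account what we know about $\NS(\XX)/L^1\NS(\XX)$ and $L^2\NS(\XX)$,'' i.e., by summing the ranks $1$, $\rk\MW(J_X)$, and $1+\sum_v(f_v-1)$ through the filtration, using Proposition~\ref{prop:S-T} for the middle graded piece. Your write-up simply makes this bookkeeping explicit.
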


Various versions of Proposition~\ref{prop:S-T} appear in the
literature, notably in \cite{Tate66b}, \cite{Gordon79},
\cite{Shioda99}, and \cite{HindryPacheco05}, and as Shioda notes
\cite[p.~359]{Shioda99}, it was surely known to the ancients.

\begin{proof}[Proof of Proposition~\ref{prop:S-T}]
  Specialized to $X/K$, the exact sequence of low degree terms
  \eqref{eq:low-degree} gives an exact sequence
$$0\to\Pic(X)\to\Pic_{X/K}(K)\to\Br(K).$$
%[this requires cohomological flatness but $X$ is a smooth 
%curve over a field, so ok]
If $X$ has a $K$-rational point, $\Pic(X)\to\Pic_{X/K}(K)$ is an
isomorphism.  In any case, $X$ has a rational point over a finite
extension $K'$ of $K$ of degree $\delta$ (the index of $X$), and over
$K'$ the coboundary in the analogous sequence
$$\Pic_{X/K'}(K')\to\Br(K')$$ 
is zero.  This implies that the cokernel of $\Pic(X)\to\Pic_{X/K}(K)$
maps to the kernel of $\Br(K)\to\Br(K')$ and therefore lies in the
$\delta$-torsion subgroup of $\Br(K)$ \cite[p.~157]{SerreLF}.  The
upshot is that the cokernel of $\Pic(X)\to\Pic_{X/K}(K)$ has exponent
dividing $\delta$.

A simple geometric argument as in Shioda \cite[p.~363]{Shioda99} shows
that the kernel of the homomorphism $L^1\Pic(\XX)\to\Pic^0(X)$ is
exactly $L^2\Pic(\XX)$, so we have an isomorphism
$$\frac{L^1\Pic(\XX)}{L^2\Pic(\XX)}\cong\Pic^0(X).$$

Now consider the composed homomorphism
$$\Pic^0(\XX)=\Pic^0_{\XX/k}(k)\to\Pic^0(X)\to\Pic^0_{X/K}(K).$$
There is an underlying homomorphism of algebraic groups 
%%   dlu 2012-Oct-26
%% $$\Pic^0_{\XX/k}\times_kK\to\Pic^0_{X/K}.$$
$$\Pic^0_{\XX/k}\times_kK\to\Pic^0_{X/K}$$
inducing the homomorphism above on points.
%%   dlu 2012-Oct-26
By the definition of the $K/k$-trace, this morphism must factor
through $B$, i.e., we have a morphism of algebraic groups over $k$:
$$\Pic^0_{\XX}\to B.$$

We are going to argue that this last morphism is surjective.  To do so
first note that a similar discussion applies over $\kbar$ and yields
the following diagram:
\begin{equation*}%\label{eq:}
\xymatrix{
\frac{ L^1\Pic(\XX\times\kbar)}{L^2\Pic(\XX\times\kbar)}\ar[r]&
\Pic^0(X\times K\kbar)\ar[r]&\Pic(X\times K\kbar)\ar[r]&\Pic_{X/K\kbar}(K\kbar)\\
\Pic^0(\XX\times\kbar)\ar[u]\ar[rrr]&&&B(\kbar)\ar[u]}
\end{equation*}
Now the cokernel of the left vertical map is a subquotient of
$\NS(\XX)$, so finitely generated, and the cokernels of the horizontal
maps across the top are trivial, $\Z$, and of finite exponent
respectively.  On the other hand, $B(\kbar)$ is a divisible group.
This implies that the image of $B(\kbar)$ in $\Pic_{X/K\kbar}(K\kbar)$
is equal to the image of $\Pic^0(\XX\times\kbar)$ in that same
group.  Since the kernel of $B(\kbar)\to\Pic_{X/K\kbar}(K\kbar)$ is
finite, this is turn implies that the morphism $\Pic^0(\XX)\to B$ is
surjective.

An alternative proof of the surjectivity is to use the exact sequence
\eqref{eq:H1s}.  The middle term is $V_\ell\Pic^0(\XX\times\kbar)$
whereas the right hand term is $V_\ell B(\kbar)$.  Thus the morphism
$\Pic^0_{\XX}\to B$ is surjective.

Next we argue that the map of $k$-points $\Pic(\XX)=\Pic_{\XX/k}(k)\to
B(k)$ has finite cokernel.  More generally, if $\phi:A\to A'$ is a
surjective morphism of abelian varieties over a field $k$, then we
claim that the map of points $A(k)\to A'(k)$ has finite cokernel.  If
$\phi$ is an isogeny, then considering the dual isogeny $\phi^\vee$
and the composition $\phi\phi^\vee$ shows that the cokernel is killed
by $\deg\phi$, so is finite.  For a general surjection, if $A''\subset
A$ is a complement (up to a finite group) of $\ker\phi$, then by the
above $A''(k)\to A'(k)$ has finite cokernel, and {\it a fortiori\/} so
does $A(k)\to A'(k)$.  (Thanks to Marc Hindry for suggesting this
argument.)

When $k$ is algebraically closed, $\Pic^0_{\XX}(k)\to B(k)$ is
obviously surjective.  It is also surjective when $k$ is finite and
$X$ has a $K$-rational point.  This follows from Lang's theorem
because according to Proposition~\ref{prop:explicitB} below, the
kernel of $\Pic^0_{\XX}\to B$ is an abelian variety.

Summing up, the map
$$\frac{L^1\Pic(\XX)}{L^2\Pic(\XX)}\to J_X(K)$$
has cokernel of finite exponent, and 
$$\Pic^0(\XX)\to B(k)$$
has finite cokernel.  Thus the map
$$\frac{L^1\NS(\XX)}{L^2\NS(\XX)}=
\frac{L^1\Pic(\XX)}{L^2\Pic(\XX)+\Pic^0(\XX)} \to
\MW(J_X)=\frac{J_X(K)}{\tau B(k)}$$ 
has finite kernel and its cokernel has finite exponent.  Since the
target group is finitely generated (Lang-N\'eron), the cokernel must
be finite.  When $X$ has a rational point the first map displayed
above is surjective, and when $k$ is finite or algebraically closed,
the second map is surjective.  Thus under both of these hypotheses,
the third map is an isomorphism.

This completes the proof of the theorem.
\end{proof}

 \begin{rem} 
   It is also possible to deduce the theorem in the case when
   $k$ is finite from the case when $k$ is algebraically closed by
   taking Galois invariants.
 \end{rem}
 
It will be convenient to have an explicit description of $B$.

\begin{prop}\label{prop:explicitB}
  If $X$ has a $K$-rational point and $k$ is perfect, then we have an
  exact sequence
$$0\to\Pic^0_{\CC/k}\to\Picvar_{\XX/k}\to B\to 0.$$
\end{prop}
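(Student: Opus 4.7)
The plan is to construct the two maps, verify injectivity on the left, surjectivity on the right, and vanishing of the composite, and finally to promote the resulting inclusion of the left term into the kernel of the right map to an equality by a dimension count. The left map is the pullback $\pi^*\colon \Pic^0_{\CC/k}\to\Pic^0_{\XX/k}$, which lands in the reduction $\Picvar_{\XX/k}$ since its source is already an abelian variety. The right map is obtained from the restriction-to-generic-fibre morphism $\Picvar_{\XX/k}\times_kK\to J_X=\Pic^0_{X/K}$: by the universal property of the $K/k$-trace $(B,\tau)$, this factors uniquely as $\Picvar_{\XX/k}\times_kK\to B\times_kK\labeledto{\tau}J_X$, yielding the desired $k$-morphism $\Picvar_{\XX/k}\to B$.

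Since $X$ has a $K$-rational point, $\pi$ admits a section and every fibre of $\pi$ has a component of geometric multiplicity $1$; Raynaud's criterion (applicable because $k$ is perfect) then gives that $\pi$ is cohomologically flat, so $\pi_{T*}\OO_{\XX_T}=\OO_{\CC_T}$ for every $k$-scheme $T$. The low-degree terms of the Leray spectral sequence then yield $\Pic(\CC_T)\hookrightarrow\Pic(\XX_T)$ functorially in $T$, hence $\Picf_{\CC/k}\hookrightarrow\Picf_{\XX/k}$ as fppf sheaves, so $\pi^*$ is a closed immersion of abelian varieties. For vanishing of the composite $\Pic^0_{\CC/k}\to B$, I compose further with $\tau$: the resulting morphism sends the class of a line bundle $L$ on $\CC$ to the class of $(\pi^*L)|_X$ on the generic fibre, which is trivial since restriction to the generic point kills $L$. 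Because $\ker\tau$ is finite and local (as $\tau$ is purely inseparable) while $\Pic^0_{\CC/k}\times_kK$ is connected and reduced, the composite $\Pic^0_{\CC/k}\to B$ is itself zero.

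For surjectivity of $\Picvar_{\XX/k}\to B$ and the dimension count needed for exactness in the middle, I reprise the argument from the proof of Proposition~\ref{prop:S-T}: reading \eqref{eq:H1s} in Tate modules, with $V_\ell B=H^0(\CCbar,R^1\pi_*\Ql)$, yields the short exact sequence
\[
0\to V_\ell\Pic^0_{\CC/k}\to V_\ell\Picvar_{\XX/k}\to V_\ell B\to 0,
\]
which simultaneously gives the surjection on $\kbar$-points (hence on abelian varieties) and the identity $\dim\Picvar_{\XX/k}=g_\CC+\dim B$. Thus $\ker(\Picvar_{\XX/k}\to B)$ has exactly the dimension of the closed subvariety $\pi^*\Pic^0_{\CC/k}$ already contained in it. Leray applied to $\pi_\kbar$ identifies $\ker(\Picvar(\XXbar)\to J_X(\Kbar))$ with $\pi^*\Pic^0(\CCbar)$, and since $\ker\tau(\kbar)=0$ the kernel of $\Picvar(\XXbar)\to B(\kbar)$ is the same, so the inclusion $\Pic^0_{\CC/k}\hookrightarrow\ker(\Picvar_{\XX/k}\to B)$ is a bijection on $\kbar$-points. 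The main obstacle is upgrading this geometric-point exactness to exactness as group schemes in positive characteristic; the purely inseparable nature of $\tau$ together with the perfectness of $k$ (which makes $\Picvar_{\XX/k}$ smooth) are precisely what one needs to close the gap and obtain the stated short exact sequence.
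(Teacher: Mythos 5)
Your construction of the two maps, the injectivity of $\pi^*$, the vanishing of the composite, and the surjectivity-plus-dimension count via \eqref{eq:H1s} all match the paper's route and are fine. But the final step is a genuine gap, and you have in fact put your finger on it yourself without closing it: knowing that $\Pic^0_{\CC/k}\hookrightarrow\ker(\Picvar_{\XX/k}\to B)$ is a bijection on $\kbar$-points only tells you that the induced surjection $\Picvar_{\XX/k}/\Pic^0_{\CC/k}\to B$ is a purely inseparable isogeny; it does not tell you that this isogeny has degree $1$. The two facts you invoke to ``close the gap'' do not do so. The relative Frobenius $A\to A^{(p)}$ of an abelian variety over a perfect field is a bijection on $\kbar$-points between smooth group schemes, yet is not an isomorphism, so smoothness of $\Picvar_{\XX/k}$ (from perfectness of $k$) is irrelevant here. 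And the purely inseparable nature of $\tau$ constrains the map $B\times_kK\to J_X$, not the map $\Picvar_{\XX/k}\to B$; it cannot bound the degree of the latter. This failure mode is real: the paper's remark immediately after the proposition notes that without the rational-point hypothesis the morphism $\Picvar_{\XX/k}/\Pic^0_{\CC/k}\to B$ may genuinely be a nontrivial purely inseparable isogeny.

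What is missing is a proof of \emph{separability} of $\Picvar_{\XX/k}/\Pic^0_{\CC/k}\to B$, i.e., injectivity on tangent spaces, and this is where the section of $\pi$ enters a second time. The paper runs the Leray argument once more in coherent cohomology: cohomological flatness (available because $\pi$ has a section) gives the exact sequence
$$0\to H^1(\CC,\OO_\CC)\to H^1(\XX,\OO_\XX)\to H^0(\CC,R^1\pi_*\OO_\XX)\to0,$$
and these three spaces are the tangent spaces at the identity of $\Pic^0_{\CC/k}$, $\Picvar_{\XX/k}$, and $B$ respectively. Exactness of this sequence shows the quotient map is separable; combined with pure inseparability (which your $\kbar$-point argument does establish, or which one gets from the integral sequences in $\Zl(1)$-cohomology for all $\ell$, including $\ell=p$ via flat cohomology), it is an isomorphism. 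You already used cohomological flatness for the injectivity of $\pi^*$, so the ingredient is available to you; you just need to apply it to the tangent spaces rather than gesturing at $\tau$.
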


\begin{proof}[Sketch of proof]
Obviously there are morphisms $\Pic^0_{\CC/k}\to\Picvar_{\XX/k}\to
B$.  The first is injective because $\pi:\XX\to\CC$ has a section. and the
second was seen to be surjective in the proof of the Shioda-Tate
theorem.  It is also clear that the composed map is zero.

Again because $\pi$ has a section, the argument in
Section~\ref{s:Leray} shows that the integral $\ell$-adic Leray
spectral sequence degenerates and we have exact sequences
$$0\to H^1(\CCbar,\Zl(1))\to H^1(\XXbar,\Zl(1))\to \
H^0(\CCbar,R^1\pi_*\Zl(1))\to0$$ for all $\ell$.  These cohomology
groups can be identified with the Tate modules of $\Pic^0_{\CC/k}$,
$\Picvar_{\XX/k}$, and $B$ respectively.  (For $\ell=p$ we should use
flat cohomology.)  This shows that $\Picvar_{\XX/k}/\Pic^0_{\CC/k}\to
B$ is purely inseparable.  But we also have
$$0\to H^1(\CC,\OO_\CC)\to H^1(\XX,\OO_\XX)\to 
H^0(\CC,R^1\pi_*\OO_\XX)\to0$$ 
and (using the existence of a section and thus cohomological flatness)
these three groups can be identified with the tangent spaces of
$\Pic^0_{\CC/k}$, $\Picvar_{\XX/k}$, and $B$ respectively.  This shows
that $\Picvar_{\XX/k}/\Pic^0_{\CC/k}\to B$ is separable.  Thus it must
be an isomorphism.
\end{proof}

\begin{rem}
  If we no longer assume $k$ is perfect, but continue to assume $X$
  has a rational point, then the conclusion of the proposition still
  holds.  Indeed, we have a morphism
  $\Picvar_{\XX/k}/\Pic^0_{\CC/k}\to B$ which becomes an isomorphism
  over $\overline{k}$.  (Here we use that $K/k$ is regular, so that
  formation of $B$ commutes with extension of $k$ \cite[6.8]{Conrad06}
  and similarly for the Picard varieties.)  It must therefore have
  already been an isomorphism over $k$.  In general, the morphism
  $\Picvar_{\XX/k}/\Pic^0_{\CC/k}\to B$ may be purely inseparable.
\end{rem}

\section{Heights}
We use the Shioda-Tate theorem to define a non-degenerate bilinear
pairing on $\MW(J_X)$ which is closely related to the N\'eron-Tate
canonical height when $k$ is finite.

To simplify the notation we write
$$L^1=(L^1\NS(\XX))\tensor\Q\quad\text{and}\quad
L^2=(L^2\NS(\XX))\tensor\Q$$ 
so that $\MW(J_X)\tensor\Q\cong L^1/L^2$. 

It is well known that the intersection form on $\NS(\XX)\tensor\Q$ is
non-degenerate (cf.~\cite[p.~29]{Ulmer11}).  Since $L^1$ has
codimension 1 in $\NS(\XX)\tensor\Q$, the kernel of the intersection
form restricted to $L^1$ has dimension at most 1, and it is easy to
see that it is in fact 1-dimensional, generated by $F$, the class of a
fiber of $\pi$.  Also, our discussion of the structure of the fibers
of $\pi$ shows that the kernel of the intersection form restricted to
$L^2$ is also 1-dimensional, generated by $F$.

It follows that for each class in $D\in L^1/L^2$, there is a
representative $\tilde D\in L^1$ such that $\tilde D$ is orthogonal to
all of $L^2$; moreover, $\tilde D$ is determined by $D$ up to the
addition of a multiple of $F$.  We thus have a homomorphism
$\phi:L^1/L^2\to L^1/\Q F$ defined by $\phi(D)=\tilde D$.  We define a
bilinear pairing on $\MW(J_X)\tensor\Q=L^1/L^2$ by the rule
$$(D,D')=-\phi(D).\phi(D')$$
where the dot signifies the intersection pairing on $\XX$, extended by
$\Q$-linearity.  The right hand side is well defined because $F\in L^2$. 

\begin{prop}
  The formula above defines a symmetric, $\Q$-valued
  bilinear form on $\MW(J_X)\tensor\Q$ which is positive definite on
  $\MW(J_X)\tensor\R$.  If $k$ is a finite field of cardinality $q$,
  then $(\cdot,\cdot)\log q$ is the N\'eron-Tate canonical height on
  $\MW(J_X) \cong J_X(K)\tensor\Q$
\end{prop}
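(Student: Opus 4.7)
The plan is to verify four assertions in sequence: well-definedness of $\phi$, symmetric bilinearity of $(\cdot,\cdot)$, positive definiteness, and agreement with the N\'eron-Tate height. The first three reduce to linear algebra plus the Hodge index theorem; the fourth is the substantive content.

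First, observe that $L^1 = F^\perp$ in $\NS(\XX)\otimes\Q$, since the degree of the restriction $D|_X$ of a divisor $D$ on $\XX$ equals $D.F$. Given $D\in L^1/L^2$ with a lift $D_0\in L^1$, I seek $E\in L^2$ such that $(D_0+E).C=0$ for every $C\in L^2$. By Section~\ref{s:fibers} the intersection form on $L^2$ is negative semi-definite with kernel $\Q F$, hence negative definite on $L^2/\Q F$; since $D_0.F = 0$, the linear functional $C\mapsto -D_0.C$ factors through $L^2/\Q F$, so $E$ exists and is unique modulo $\Q F$. This produces $\phi(D) = \tilde D \in L^1/\Q F$, well defined.

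Bilinearity and symmetry of $(\cdot,\cdot)$ are inherited from the intersection pairing; independence of the choice of lift $\tilde D$ reduces to $(\tilde D + cF).\phi(D') = \tilde D.\phi(D')$, which holds because $F\in L^2$ and $\phi(D')\perp L^2$ by construction. For positive definiteness I would invoke the Hodge index theorem: the intersection form on $\NS(\XX)\otimes\R$ has signature $(1,\rho-1)$ where $\rho = \rk \NS(\XX)$, and for the nonzero isotropic class $F$ the induced form on $F^\perp/\Q F = L^1/\Q F$ is negative definite (a standard fact for quadratic forms of signature $(1,n)$: the orthogonal complement of a nonzero isotropic vector is negative semi-definite with kernel exactly that vector). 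The induced map $\phi:L^1/L^2\to L^1/\Q F$ is injective, since $\phi(D)\in\Q F$ would force $\tilde D\in L^2$ and hence $D=0$ in $L^1/L^2$. Therefore $(D,D) = -\phi(D).\phi(D) > 0$ for $D\neq 0$, and this persists after scalar extension from $\Q$ to $\R$ since the form is $\Q$-valued on a finite-dimensional $\Q$-space.

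The main obstacle is comparing $(\cdot,\cdot)\log q$ with the N\'eron-Tate height $\hat h$ when $k$ is finite. The strategy is to decompose $\hat h$ as a sum of local N\'eron symbols $\langle\cdot,\cdot\rangle_v$ over the places $v$ of $K$, and to recognize each local symbol as a local intersection multiplicity on $\XX$ above $v$. Given $P\in J_X(K)$, represent $P$ by a divisor $D_P$ on $X$ of degree $0$, take its schematic closure $\overline{D_P}\subset\XX$ (an element of $L^1$ lifting $P$ under Shioda-Tate), and adjust by fiber components to produce $\phi(P)$. This fiber adjustment is exactly the one that converts local intersection multiplicities into N\'eron symbols; summing over $v$ yields $\hat h(P,Q) = -\phi(P).\phi(Q)\log q$, with the factor $\log q$ coming from the fact that residue fields at closed points of $\CC$ have cardinality a power of $q$ and there are no archimedean places. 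The subtle point is compatibility of local N\'eron symbols with intersection theory at places of non-smooth reduction, which rests on the fiber analysis of Section~\ref{s:fibers}; the elliptic case goes back to Tate and the higher-genus extension is carried out in the references cited with Proposition~\ref{prop:S-T}.
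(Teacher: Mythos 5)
Your proposal is correct and follows essentially the same route as the paper: linear algebra on $L^1/L^2$ and $L^2$ for well-definedness and bilinearity, the Hodge index theorem for positive definiteness, and a decomposition of the N\'eron--Tate height into local contributions identified with intersection multiplicities on $\XX$ (the paper cites Gross's construction of local heights on the relative curve for this last step). The only cosmetic difference is in the Hodge index step: you use the standard fact that for the isotropic class $F$ the induced form on $F^\perp/\Q F=L^1/\Q F$ is negative definite, whereas the paper reaches the same conclusion via an auxiliary multisection $P$ and the positivity of $(P+mF)^2$; both are valid.
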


\begin{proof}[Sketch of proof]
It is clear that the pairing is bilinear, symmetric, and $\Q$-valued.  

To see that it is positive definite on $\MW(J_X)\tensor\Q$, we use the
Hodge index theorem.  Given $D\in L^1/L^2$, choose an irreducible
multisection $P$ and a representative $\tilde\phi(D)\in L^1$ for
$\phi(D)$.  The intersection number $P.D$ is in $\Q$ and replacing $D$
with $nD$ for suitable $n$ we can assume it is in $\Z$.  Then adding a
multiple of $F$ to $\tilde\phi(D)$ and calling the result again
$\tilde\phi(D)$, we get a new representative of $\phi(D)$ such that
$P.\tilde\phi(D)=0$.  On the other hand $(P+mF)^2>0$ if $m$ is large
enough and $(P+mF).\tilde\phi(D)=0$.  The Hodge index theorem (e.g.,
\cite[2.4]{BadescuAS}) then implies that $\tilde\phi(D)^2<0$ and so
$(D,D)>0$.

Since the pairing is positive definite and $\Q$-valued on
$\MW(J_X)\tensor\Q$, it is also positive definite on
$\MW(J_X)\tensor\R$.

The connection with N\'eron-Tate canonical height can be seen by using
the local canonical heights.  These were constructed purely on the
relative curve $\XX\to\CC$ (as opposed to on the N\'eron model
$\JJ_X\to\CC$) by Gross in \cite{Gross86}.  An inspection of his
construction shows that summing the local heights over all places of
$K$ gives exactly our definition above times $\log q$.
\end{proof}

\begin{exer}
  When $X$ is an elliptic curve, give a global proof of the connection
  between the Shioda-Tate height and the canonical height as follows:
  Check from the definition that $\log q$ times the Shioda-Tate height
  differs from the naive height associated to the origin of $X$ (as a
  divisor on $X$) by a bounded amount.  Since the Shioda-Tate height
  is already bilinear, it must thus be the canonical height as
  constructed by Tate.
\end{exer}

\chapter{Cohomology and cycles}\label{ch:cohom}

\section{Tate's conjecture for $\XX$}
Throughout this section, $k$ is a finite field 
%%   dlu 2012-Oct-26
%%   
of characteristic $p$
%%   dlu 2012-Oct-26
and $\XX$ is a surface
smooth and projective over $k$.  We write $\XXbar$ for
$\XX\times_k\kbar$.  Our goal is to discuss Tate's first conjecture on
cycles and cohomology classes for $\XX$.  Since this was already
discussed in \cite[Lecture~2]{Ulmer11}, we will be brief.

\subsection{Basic exact sequences}\label{ss:seqs}
Let $\Br(\XX)=H^2(\XX,\OO_X^\times)=H^2(\XX,\G_m)$ be the
(cohomological) Brauer group of $\XX$.  Here we use the \'etale or
flat topologies.  It is known that $\Br(\XX)$ is a torsion group
and it is conjectured to be finite in our situation.

For any positive integer $n$, consider the Kummer sequence on
$\XXbar$:
$$0\to\mu_{n}\to\G_m\labeledto{n}\G_m\to0.$$
Here if $p|n$ the sequence is exact in the flat topology, but not in the
\'etale topology.

%%   dlu 2012-Oct-26
%%   
Let $\ell$ be a prime (with $\ell=p$ allowed).
%%   dlu 2012-Oct-26
Taking cohomology and noting that
$\Pic(\XXbar)/\ell^n=\NS(\XXbar)/\ell^n$ (because $\Pic^0(\XXbar)$ is
divisible), we find an exact sequence
$$0\to\NS(\XXbar)/\ell^n\to
H^2(\XXbar,\mu_{\ell^n})\to\Br(\XXbar)[\ell^n]\to0.$$

Taking the inverse limit over $n$ yields an exact sequence
$$0\to\NS(\XXbar)\tensor\Zl\to H^2(\XXbar,\Zl(1))\to T_\ell\Br(\XXbar)\to0.$$

Now $H^1(G_k,\NS(\XXbar)\tensor\Zl)$ is a finite group.  On the
other hand, if $H$ is a group with finite $\ell$-torsion, $T_\ell H$ is
finitely generated and torsion-free over $\Zl$.  This applies in
particular to $T_\ell\Br(\XXbar)$.  Thus taking
$G_k$ invariants, we get an exact sequence
$$0\to\left(\NS(\XXbar)\tensor\Zl\right)^{G_k}\to H^2(\XXbar,\Zl(1))^{G_k}\to
\left(T_\ell\Br(\XXbar)\right)^{G_k}\to 0.$$

Next note that since $k$ is finite, $H^1(G_k,\Pic^0(\XXbar))=0$ and so 
$$\left(\NS(\XXbar)\tensor\Zl\right)^{G_k}=\NS(\XX)\tensor\Zl$$
since by definition $\NS(\XX)$ is the image of $\Pic(\XX)$ in
$\NS(\XXbar)$.

Now using the Hochschild-Serre spectral sequence, we have a
homomorphism $H^2(\XX,\Zl(1))\to H^2(\XXbar,\Zl(1))^{G_k}$ which is
surjective with finite kernel.  Since $T_\ell\Br(\XX)$ is
torsion-free, the commutative diagram
$$\xymatrix{
&\NS(\XX)\tensor\Zl\ar[d]\ar@{=}[r]&
\left(\NS(\XXbar)\tensor\Zl\right)^{G_k}\ar[d]&\\
(\textrm{finite})\ar[r]& H^2(\XX,\Zl(1))\ar[r]\ar[d]& 
H^2(\XXbar,\Zl(1))^{G_k}\ar[d]\ar[r]&0\\
&T_\ell\Br(\XX)\ar[d]\ar[r]&T_\ell\Br(\XXbar)^{G_k}\ar[d]&\\
&0&0&}$$ 
induces an isomorphism $T_\ell\Br(\XX)\isoto T_\ell\Br(\XXbar)^{G_k}$.

Putting everything together, we get an exact sequence
\begin{equation}\label{eq:cycleclass}
0\to\NS(\XX)\tensor\Zl\to H^2(\XXbar,\Zl(1))^{G_k}\to
T_\ell\Br(\XX)\to 0
\end{equation}

\subsection{First Tate conjecture}
Tate's first conjecture for a prime $\ell$, which we denote
$T_1(\XX,\ell)$, says that the cycle class map induces an isomorphism
$$\NS(\XX)\tensor\Ql\cong H^2(\XXbar,\Ql(1))^{G_k}.$$

The exact sequence \eqref{eq:cycleclass} at the end of the previous
subsection leads immediately to several equivalent forms of this
conjecture.

\begin{prop}\label{prop:T1<=>Br}
The following are equivalent:
\begin{enumerate}
\item $T_1(\XX,\ell)$
\item The cycle class map induces an isomorphism
$$\NS(\XX)\tensor\Zl\cong H^2(\XXbar,\Zl(1))^{G_k}$$
\item $T_\ell \Br(\XX)=0$
\item The $\ell$ primary component of $\Br(\XX)$ is finite.
\end{enumerate}
\end{prop}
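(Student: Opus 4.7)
The whole argument hinges on the exact sequence~\eqref{eq:cycleclass} derived just above, together with standard structural facts about torsion $\Zl$-modules. My plan is to deduce the equivalence $(1)\Leftrightarrow(2)\Leftrightarrow(3)$ directly from~\eqref{eq:cycleclass} and a short finiteness argument, and then handle $(3)\Leftrightarrow(4)$ via the structure theorem for cofinitely generated $\Zl$-modules.

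The implication $(2)\Leftrightarrow(3)$ is immediate: in~\eqref{eq:cycleclass} the first map is already known to be injective, and its cokernel is $T_\ell\Br(\XX)$, so it is an isomorphism precisely when $T_\ell\Br(\XX)=0$. For $(1)\Leftrightarrow(3)$, I would tensor~\eqref{eq:cycleclass} with $\Ql$ over $\Zl$ to obtain an exact sequence
$$0\to \NS(\XX)\otimes\Ql\to H^2(\XXbar,\Ql(1))^{G_k}\to V_\ell\Br(\XX)\to 0,$$
so $(1)$ holds iff $V_\ell\Br(\XX):=T_\ell\Br(\XX)\otimes_{\Zl}\Ql$ vanishes. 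The cokernel $T_\ell\Br(\XX)$ is a finitely generated $\Zl$-module, being a quotient of the finitely generated module $H^2(\XXbar,\Zl(1))^{G_k}$, and it is $\Zl$-torsion-free since the Tate module of any abelian group is torsion-free (an easy check from the defining inverse system). A finitely generated torsion-free $\Zl$-module vanishes iff it does after $\otimes\Ql$, so $(1)\Leftrightarrow(3)$ follows.

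For $(3)\Leftrightarrow(4)$ I would appeal to the Kummer sequence on $\XX$ itself (flat topology when $\ell=p$, \'etale otherwise), which yields
$$0\to \Pic(\XX)/\ell^n\to H^2(\XX,\mu_{\ell^n})\to \Br(\XX)[\ell^n]\to 0.$$
The middle group is finite for $\XX$ smooth and proper over a finite field, so $\Br(\XX)[\ell^n]$ is finite for every $n$. Consequently the $\ell$-primary part $\Br(\XX)\{\ell\}$ is a cofinitely generated $\Zl$-module and therefore decomposes as $(\Ql/\Zl)^r\oplus F$ with $F$ a finite $\ell$-group. A direct computation of the inverse limit gives $T_\ell\Br(\XX)\cong\Zl^r$, which vanishes iff $r=0$ iff $\Br(\XX)\{\ell\}$ is finite. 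The only non-formal ingredient in the whole argument is the finiteness of $H^2(\XX,\mu_{\ell^n})$ in the flat topology when $\ell=p$; this is the main technical input one must cite (rather than reprove), while everything else follows mechanically from~\eqref{eq:cycleclass} and linear algebra over $\Zl$.
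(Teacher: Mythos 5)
Your proposal is correct and follows essentially the same route as the paper: the paper also deduces $(1)\Leftrightarrow(2)\Leftrightarrow(3)$ by tensoring the sequence~\eqref{eq:cycleclass} with $\Ql$ and using that $T_\ell\Br(\XX)$ is finitely generated and torsion-free over $\Zl$, and gets $(3)\Leftrightarrow(4)$ from the finiteness of $\Br(\XX)[\ell]$ exactly as you do. The only difference is that you spell out the source of that finiteness (the Kummer sequence on $\XX$, with the flat-cohomology input at $\ell=p$), which the paper takes for granted from the preceding subsection.
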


\begin{proof}
Tensoring Equation~\eqref{eq:cycleclass} with $\Ql$ shows that
$T_1(\XX,\ell)$ is equivalent to
$V_\ell\Br(\XX):=T_\ell\Br(\XX)\tensor\Ql=0$ and therefore to
$T_\ell\Br(\XX)=0$ which is in turn equivalent to the injection
$$\NS(\XX)\tensor\Zl\to H^2(\XXbar,\Zl(1))$$
being an isomorphism.  This establishes the equivalence of 1-3.  Since
$\Br(\XX)$ is a torsion group and $\Br(\XX)[\ell]$ is finite, the
$\ell$-primary component of $\Br(\XX)$ is finite if and only if
$T_\ell Br(\XX)$ is finite.  This establishes the equivalence of 3 and
4.
\end{proof}

We will see below that $T_1(\XX,\ell)$ is equivalent to $T_1(\XX,\ell')$
for any two primes $\ell$ and $\ell'$.

We note also that the flat cohomology group 
%%   dlu 2012-Oct-26
%%   $H^2(\XXbar,\Qp)$ 
$H^2(\XXbar,\Qp(1))$ 
%%   dlu 2012-Oct-26
is isomorphic to the crystalline cohomology group
$(H^2(\XXbar/W(\kbar))\tensor_{\Zp}\Qp)^{\Fr=p}$ where the exponent
indicates the subspace upon which the absolute Frobenius $\Fr$ acts by
multiplication by $p$.  One can thus reformulate the $\ell=p$ part
of the Tate conjecture in terms of crystalline cohomology.

\section{Selmer group conjecture for $J_X$}
We assume throughout this section that the ground field $k$ is finite.

By the Lang-N\'eron theorem discussed in Section~\ref{s:LangNeron}
above, the Mordell-Weil group $J(K)/\tau B(k)$ is finitely generated,
and since $B(k)$ is obviously finite, we have that $J_X(K)$ is also
finitely generated.  In this section we quickly review the standard
mechanism to bound (and conjecturally capture) the rank of this group
via descent.

\subsection{Selmer and Tate-Shafarevich groups}\label{ss:Sel-Sha}
For each positive integer $n$, consider the exact sequence of sheaves
for the flat topology on $\spec K$:
$$0\to J_X[n]\to J_X\labeledto{n} J_X\to0$$
where $J_X[n]$ denotes the kernel of
multiplication by $n$ on $J_X$.  Taking cohomology yields
$$0\to J_X(K)/nJ_X(K)\to H^1(K,J_X[n])\to H^1(K,J_X)[n]\to0$$

Similarly, for every completion $K_v$ of $K$ we have a similar
sequence and restriction maps.
\begin{equation*}%\label{eq:}
\xymatrix{
0\ar[r]& J_X(K)/nJ_X(K)\ar[r]\ar[d]& H^1(K,J_X[n])\ar[r]\ar[d]&
H^1(K,J_X)[n]\ar[r]\ar[d]&0\\
0\ar[r]& J_X(K_v)/nJ_X(K_v)\ar[r]& H^1(K_v,J_X[n])\ar[r]&
H^1(K_v,J_X)[n]\ar[r]&0}
\end{equation*}

We define $\Sel(J_X,n)$ to be
$$\ker\left(H^1(K,J_X[n])\to\prod_v H^1(K_v,J_X)\right)$$
where the product is over the places of $K$.  Also, for any prime
$\ell$, we define $\Sel(J_X,\Zl)=\varprojlim_n \Sel(J_X,\ell^n)$.
We define the Tate-Shafarevich group by
$$\sha(J_X)=\ker\left(H^1(K,J_X)\to \prod_v H^1(K_v,J_X)\right).$$

We have exact sequences
$$0\to J_X(K)/nJ_X(K)\to\Sel(J_X,n)\to\sha(J_X)[n]\to0$$
for each $n$.  All the groups appearing here are finite.  (For $n$
prime to $p$, the classical proof of finiteness of the Selmer
group---reducing it to the finiteness of the class group and finite
generation of the unit group of Dedekind domains with fraction field
$K$---works in our context.  The $p$ part was first proven by Milne
\cite{Milne70}.)  So taking the inverse limit over powers of a prime
$\ell$ yields an exact sequence
\begin{equation}\label{eq:descent}
0\to J_X(K)\tensor\Zl\to\Sel(J_X,\Zl)\to T_\ell\sha(J_X)\to 0
\end{equation}

\subsection{Tate-Shafarevich conjecture}
Tate and Shafarevich conjectured (independently) that $\sha(J_X)$ is
a finite group.  We write $TS(J_X)$ for this conjecture, and
$TS(J_X,\ell)$ for the {\it a priori\/} weaker conjecture that the
$\ell$-primary part of $\sha(J_X)$ is finite.

For each prime $\ell$, we refer to the statement ``the homomorphism
$$J_X(K)\tensor\Zl\to\Sel(J_X/K,\Zl)$$
is an isomorphism'' as the {\it Selmer group conjecture\/}
$S(J_X,\ell)$.  (This is perhaps non-standard and is meant to suggest
that the Selmer group captures rational points.)

The following is obvious from the exact sequence \eqref{eq:descent} above:

\begin{prop}\label{prop:Sel<=>TS}
 For each prime $\ell$, the Selmer group conjecture
  $S(J_X,\ell)$ holds if and only if the Tate-Shafarevich conjecture
  $TS(J_X,\ell)$ holds.
\end{prop}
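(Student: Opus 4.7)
The plan is to read off both implications directly from the exact sequence \eqref{eq:descent}
$$0\to J_X(K)\tensor\Zl\to\Sel(J_X,\Zl)\to T_\ell\sha(J_X)\to 0,$$
together with a mild structural fact about the $\ell$-primary part of $\sha(J_X)$. Observe first that by construction the inclusion $J_X(K)\tensor\Zl\into\Sel(J_X,\Zl)$ in this sequence is precisely the map whose bijectivity is $S(J_X,\ell)$. Hence $S(J_X,\ell)$ holds if and only if $T_\ell\sha(J_X)=0$. So the task reduces to identifying when $T_\ell\sha(J_X)$ vanishes with when $\sha(J_X)[\ell^\infty]$ is finite.

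For the implication $TS(J_X,\ell)\Rightarrow S(J_X,\ell)$: if $\sha(J_X)[\ell^\infty]$ is finite of exponent $\ell^N$, then for every $n\ge N$ the transition map $\sha(J_X)[\ell^{n+1}]\to\sha(J_X)[\ell^n]$ in the definition of $T_\ell$ is multiplication by $\ell$ on a group killed by $\ell^N$; iterating $N$ times gives the zero map, so the inverse limit $T_\ell\sha(J_X)$ vanishes.

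For the reverse implication we need the structural input. From Subsection~\ref{ss:Sel-Sha} the group $\Sel(J_X,\ell)$ is finite, so $\sha(J_X)[\ell]$ is finite. Any $\ell$-primary torsion abelian group $M$ with $M[\ell]$ finite is cofinitely generated over $\Zl$: its maximal divisible subgroup is isomorphic to $(\Ql/\Zl)^r$ for some $r<\infty$ (bounded by $\dim_{\F_\ell}M[\ell]$), and the quotient by this divisible subgroup is reduced with finite $\ell$-torsion, hence itself finite. Applying this to $M=\sha(J_X)[\ell^\infty]$ yields
$$\sha(J_X)[\ell^\infty]\cong(\Ql/\Zl)^r\oplus F,\qquad F\text{ finite},$$
and therefore $T_\ell\sha(J_X)\cong\Zl^r$. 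Consequently $T_\ell\sha(J_X)=0$ forces $r=0$, which makes $\sha(J_X)[\ell^\infty]=F$ finite, i.e. $TS(J_X,\ell)$.

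The only step that goes beyond a direct reading of the exact sequence is this structural decomposition of an $\ell$-primary torsion group with finite $\ell$-torsion; this is the place where the finiteness of $\Sel(J_X,\ell)$ (recorded in Subsection~\ref{ss:Sel-Sha}) is really used, and it is the only potential obstacle in an otherwise formal argument. Everything else is immediate from \eqref{eq:descent}, which is presumably why the statement is called obvious in the text.
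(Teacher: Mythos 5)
Your argument is correct and is exactly the intended reading of the exact sequence \eqref{eq:descent}, which the paper simply declares the proposition ``obvious'' from. You also rightly flag the one genuinely non-formal input --- that finiteness of $\Sel(J_X,\ell)$ gives finiteness of $\sha(J_X)[\ell]$, hence the decomposition $\sha(J_X)[\ell^\infty]\cong(\Q_\ell/\Z_\ell)^r\oplus(\text{finite})$ with $T_\ell\sha(J_X)\cong\Z_\ell^r$ --- without which the implication $S(J_X,\ell)\Rightarrow TS(J_X,\ell)$ would not follow.
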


Note that if the $\ell$ primary component of $\sha(J_X)$ is finite,
then knowing $\Sel(J_X,\ell^{n})$ for sufficiently large $n$
determines the rank of $J_X(K)$.  This observation and some input from
$L$-functions leads (conjecturally) to an effective algorithm for
computing generators of $J_X(K)$, see \cite{Manin71}.

\section{Comparison of cohomology groups}
There is an obvious parallel between the conjectures $T_1(\XX,\ell)$
and finiteness of $\Br(\XX)[\ell^\infty]$ on the one hand, and
$S(J_X,\ell)$ and $TS(J_X,\ell)$ on the other.  The Shioda-Tate
isomorphism gives a precise connection between the groups of cycles
and points involved.  In this section, we give 
connections between the cohomology groups involved.  We restrict to
the simplest cases here---they are already complicated enough.  In
Section~\ref{s:analytic-comparison} we will give a comparison for all
$\ell$ and the most general hypotheses on $\XX$ and $X$ using analytic
methods.

Thus for the rest of this section, the following hypotheses are in
force (in addition to the standing hypotheses): $k$ is finite of
characteristic $p$, $\ell$ is a prime $\neq p$, and $X$ has a
$K$-rational point.  This implies that $\XX\to\CC$ has a section, and
so the multiplicities $m_v$ of the fibers of $\pi$ are
all equal to 1 and $\pi$ is cohomologically flat.

\subsection{Comparison of $\Br(\XX)$ and $\sha(J_X)$}
These groups are closely related---as we will see later, under the
standing hypotheses and assuming $k$ is finite, they differ by a
finite group.  Assuming also that $X$ has a $K$-rational point, even
more is true.

\begin{prop}[Grothendieck]
  In addition to the standing hypotheses, assume that $k$ is finite
  and $X$ has a $K$-rational point.  Then we
  have a canonical isomorphism $\Br(\XX)\cong\sha(J_X)$.
\end{prop}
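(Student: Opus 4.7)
The plan is to compute $\Br(\XX)=H^2(\XX,\G_m)$ using the Leray spectral sequence
$$E_2^{pq}=H^p(\CC,R^q\pi_*\G_m)\Longrightarrow H^{p+q}(\XX,\G_m)$$
in the fppf topology, and to match the result with $\sha(J_X)$ by recognizing the relative Picard sheaf as the N\'eron model of $J_X$. Since $\pi_*\OO_\XX=\OO_\CC$ universally (the cohomological flatness is guaranteed by the existence of a section), we have $\pi_*\G_m=\G_m$; and by its very definition $R^1\pi_*\G_m$ is the relative Picard functor $\Picf_{\XX/\CC}$.

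First I would extract useful vanishings. Because $\CC$ is a smooth projective curve over a finite field $k$, class field theory gives $H^q(\CC,\G_m)=0$ for all $q\ge2$, and in particular $\Br(\CC)=0$. Next I would invoke a theorem of Artin: since $\XX$ is regular two-dimensional and $\pi$ is proper with one-dimensional fibers, $R^2\pi_*\G_m=0$. Concretely, the stalks are Brauer groups of strictly local two-dimensional regular rings, which by Auslander--Goldman inject into the Brauer group of their fraction fields, and these vanish using Tsen applied to the residue curves over $\overline{k}_v$. With these inputs, the Leray spectral sequence degenerates in the relevant range and collapses to an isomorphism
$$\Br(\XX)\;\cong\;H^1(\CC,\Picf_{\XX/\CC}).$$

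It remains to identify this group with $\sha(J_X)$. Because $X$ has a $K$-rational point, $\pi$ admits a section, so all fiber multiplicities $m_v$ equal $1$, and by Raynaud's representability theorem the identity component $\Picf^0_{\XX/\CC}$ is canonically isomorphic to the N\'eron model $\JJ_X$ of $J_X$ over $\CC$. The quotient $\Picf_{\XX/\CC}/\Picf^0_{\XX/\CC}$ is a skyscraper sheaf of component groups supported at bad fibers; the existence of a section trivializes its contribution to $H^1(\CC,-)$, yielding $H^1(\CC,\Picf_{\XX/\CC})=H^1(\CC,\JJ_X)$. Finally, a standard local-global computation uses the N\'eron mapping property together with the vanishing $H^1(\spec\OO_v^{sh},\JJ_X)=0$ at each closed point $v$ to identify $H^1(\CC,\JJ_X)$ with the kernel of $H^1(K,J_X)\to\prod_v H^1(K_v,J_X)$, which by definition is $\sha(J_X)$.

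The principal obstacle is the vanishing $R^2\pi_*\G_m=0$: this rests on Artin's analysis of Brauer groups of two-dimensional regular local rings and is the substantive geometric input that separates this theorem from a purely formal spectral-sequence manipulation. A secondary technical point is Raynaud's identification $\Picf^0_{\XX/\CC}\cong\JJ_X$, which uses the section (and hence the triviality of the fiber multiplicities) in an essential way. Once these two inputs are in hand, the theorem follows from a formal diagram chase combining the Leray five-term sequence for $\G_m$ with the local-global sequence defining $\sha(J_X)$.
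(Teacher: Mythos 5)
Your first step---Leray for $\G_m$, the identity $\pi_*\G_m=\G_m$ from cohomological flatness, Artin's vanishing $R^2\pi_*\G_m=0$, and $\Br(\CC)=0$---is exactly how the paper begins, but one input is wrong: for a smooth complete curve over a finite field one has $H^3(\CC,\G_m)\cong\Q/\Z$, not $0$ (class field theory gives vanishing only in degree $2$ and in degrees $\ge4$). This is repairable: since $\pi$ has a section, the edge map $H^3(\CC,\G_m)\to H^3(\XX,\G_m)$ is injective, which is what the paper uses to kill the differential out of $H^1(\CC,R^1\pi_*\G_m)$ and obtain $\Br(\XX)\cong H^1(\CC,R^1\pi_*\G_m)$.

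The second half is where the genuine gaps are, and they sit precisely at the steps you describe as ``standard.'' First, Raynaud's theorem identifies $\Picf^0_{\XX/\CC}$ with the \emph{identity component} $\JJ_X^0$ of the N\'eron model, not with $\JJ_X$ itself; and the quotient $\Picf_{\XX/\CC}/\Picf^0_{\XX/\CC}$ is not a skyscraper---it surjects onto the constant sheaf $\Z$ via the total degree, and at bad fibers its stalks also record the partial degrees on the several components. Second, even granting an identification of $H^1(\CC,\Picf_{\XX/\CC})$ with $H^1(\CC,\JJ_X)$, the latter group is \emph{not} $\sha(J_X)$ in general: by the N\'eron mapping property $\JJ_X$ is $j_*$ of its generic fiber on the smooth site, so $H^1(\CC,\JJ_X)$ is the subgroup of $H^1(K,J_X)$ of classes trivialized over the fraction fields of the strict henselizations $\OO_v^{sh}$; this contains $\sha(J_X)$, but the quotient maps to $\prod_v H^1(k_v,\Phi_v)$ with $\Phi_v$ the component groups, and that contribution does not obviously vanish. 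Showing that these component-group discrepancies all cancel is exactly the ``assez long et technique'' content of Grothendieck's argument, so your proposal defers the hard part to steps that are false as stated. The paper's route avoids N\'eron models altogether: writing $\FF=R^1\pi_*\G_m$ and $\GG=j_*j^*\FF$, one gets $0\to\KK\to\FF\to\GG\to0$ with $\KK$ a genuine skyscraper supported at the reducible fibers; the presence of a multiplicity-one component in every fiber forces $H^1(\CC,\KK)=0$, Lang's theorem gives $H^1(\CC_x,\FF)=0$ for the henselizations $\CC_x$, and these two facts identify $\Br(\XX)=H^1(\CC,\FF)$ with the everywhere-locally-trivial part of $H^1(\CC,\GG)$, which the Leray sequence for $j$ identifies with $\sha(J_X)$. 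You should either adopt that route or supply the component-group bookkeeping your route requires.
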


This is proven in detail in \cite[\S4]{Grothendieck68} by an argument
``assez long et technique.''  We sketch the main points in the rest of
this subsection.

First of all, since $\pi$ is cohomologically flat, we have
$\pi_*\G_m=\G_m$.  Also, we have a vanishing/cohomological dimension
result of Artin: $R^q\pi_*\G_m=0$ for $q>1$.  Thus the Leray spectral
sequence for $\pi$ and $\G_m$ gives a long exact sequence
$$H^2(\CC,\G_m)\to H^2(\XX,\G_m)\to H^1(\CC,R^1\pi_*\G_m)\to
H^3(\CC,\G_m)\to H^3(\XX,\G_m).$$
Now $H^2(\CC,\G_m)=\Br(\CC)=0$ since $\CC$ is a smooth complete curve
over a finite field.  Also, since $\pi$ has a section,
$H^3(\CC,\G_m)\to H^3(\XX,\G_m)$ is injective.  Thus we have an isomorphism
$$\Br(\XX)=H^2(\XX,\G_m)\cong H^1(\CC,R^1\pi_*\G_m).$$

Now write $\FF=R^1\pi_*\G_m$ and consider the inclusion $\eta=\spec
K\into\CC$.  Let $\GG=j_*j^*\FF$ so that we have a canonical morphism
$\FF\to\GG$.  If $\xbar$ is a geometric point over a closed point of $\CC$, we write
$\tilde\XX_\xbar$, $\tilde\eta_\xbar$, and $\tilde\CC_\xbar$ for the
strict henselizations at $\xbar$.  Then the stalk
of $\FF\to\GG$ at $\xbar$ is
$$\Pic(\tilde\XX_\xbar/\tilde\CC_\xbar)
\to\Pic(\tilde \XX_{\tilde\eta_\xbar}/\tilde\eta_{\xbar}).$$
Since the Brauer group of $\tilde\eta_\xbar$ vanishes, the displayed
map is surjective (cf.~the exact sequence~\eqref{eq:low-degree}).  The
kernel is zero at all $\xbar$ where the fiber of $\pi$ is reduced and
irreducible.  Thus we have an exact sequence
$$0\to\KK\to\FF\to\GG\to0$$ 
where  $\KK$ is a skyscraper sheaf supported on the points of $\CC$
where the fibers of $\pi$ are reducible.  (See
\cite[pp. 115--118]{Grothendieck68} for a more complete description of
this sheaf and its cohomology.)  Under our hypotheses ($k$ finite and
$X$ with a $K$-rational point), every fiber of $\pi$ has a component
of multiplicity 1, and this is enough to ensure that
$H^1(\CC,\KK)=0$.  

Now write $\XX_x$ and $\CC_x$ for the ordinary henselizations of $\XX$
and $\CC$ at a closed point $x$ of $\CC$.  Then we have a diagram
\begin{equation*}
\xymatrix{
0\ar[r]&H^1(\CC,\FF)\ar[r]\ar[d]&H^1(\CC,\GG)\ar[r]\ar[d]
&\prod_xH^2(x,\KK_x)\ar@{=}[d]\\
0\ar[r]&\prod_xH^1(\CC_x,\FF)\ar[r]&\prod_xH^1(\CC_x,\GG)\ar[r]
&\prod_xH^2(x,\KK_x).}
\end{equation*}
Lang's theorem implies that $H^1(\CC_x,\FF)=0$ for all $x$.  Thus
$\Br(\XX)=H^1(\CC,\FF)$ is identified with the subgroup of
$H^1(\CC,\GG)$ consisting of elements which go to zero in
$H^1(\CC_x,\GG)$ for all $x$.  Call this group $\sha(\CC,\GG)$.

Finally, the Leray spectral sequence for $\eta\into\CC$ and the
definition of $\GG$ leads to an exact sequence
$$0\to\sha(\CC,\GG)\to
H^1(\eta,j^*\FF)\to\coprod_xH^1(\kappa(\CC_x),j^*\FF)$$
where $\kappa(\CC_x)$ is the field of fractions of the henselization of
$\CC$ at $x$.  This identifies $\sha(\CC,\GG)$ with $\sha(J_X)$ as
defined in Subsection~\ref{ss:Sel-Sha}.

This completes our sketch of the Grothendieck's theorem. The reader is
encouraged to consult \cite{Grothendieck68} for a much more general
discussion delivered in the inimitable style of the master.

One consequence of the proposition is that (under the hypotheses
there) the conjectures on the finiteness of $\Br(\XX)$ and $\sha(J_X)$
are equivalent, and thus so are $T_1(\XX,\ell)$ and $S(J_X,\ell)$ for all
$\ell$.  We will see below that this holds even without the supplementary
hypothesis on $X$.

\subsection{Comparison of the Selmer group and
  $H^1(\CCbar,R^1\pi_*\Zl(1))^{G_k}$}\label{s:Sel-H1R1} 
The arguments in Section~\ref{s:Leray} show that under our hypotheses
the Leray spectral sequence degenerates at the integral level as well
and so $H^2(\XXbar,\Zl(1))$ has a filtration whose graded pieces are
the groups $H^i(\CCbar,R^j\pi_*\Zl(1))$ where $i+j=2$.

As we will see below, from the point of view of the Tate conjecture,
the interesting part of $H^2(\XXbar,\Zl(1))$ is
$H^1(\CCbar,R^1\pi_*\Zl(1))$.  It turns out that its $G_k$-invariant
part is closely related to the $\ell$-adic Selmer group of
$J_X$---they differ by a finite group.  

Before making this more precise, we make the groups
$H^i(\CCbar,\Zl(1))$ more explicit.  Since the fibers of $\pi$ are
connected, $\pi_*\Zl(1)=\Zl(1)$ and so $H^2(\CCbar,\pi_*\Zl(1))=\Zl$.
It is easy to see that under the map $H^2(\XXbar,\Zl(1))\to
H^2(\CCbar,\pi_*\Zl(1))$, the cycle class of a section maps to a
generator of $H^2(\CCbar,\pi_*\Zl(1))$.

The stalk of $R^2\pi_*\Zl(1)$ at a geometric point $\xbar$ over a
closed point $x$ of $\CC$ is $\Zl^{f_{\xbar}}$ where $f_{\xbar}$ is
the number of irreducible components in the geometric fiber.  The
action of $G_k$ permutes the factors as it permutes the components.
If we write the fiber as $\sum_i m_{\xbar,i}\XX_{\xbar,i}$ as in
Section~\ref{s:fibers}, then the specialization map
$$\Zl^{f_{\xbar}}=(R^2\pi_*\Zl(1))_\xbar\to
(R^2\pi_*\Zl(1))_{\etabar}=\Zl$$ 
is $(c_i)\mapsto \sum_i m_{\xbar,i}c_i$.  It follows that
$H^0(\CCbar,R^2\pi_*\Zl(1))^{G_k}$ has rank $1+\sum_v(f_v-1)$
where $f_v$ is the number of irreducible components of the fiber
$\XX_v$ (as a scheme over the residue field $k_v$).  Also, the cycle
classes of components of fibers lie in
$$H^0(\CCbar,R^2\pi_*\Zl(1))^{G_k}\subset H^2(\XXbar,\Zl(1))^{G_k}$$
and span this subgroup.

%We also remark that the actions of $G_k$ on $H^2(\CCbar,\pi_*\Ql(1))$
%and on $H^0(\CCbar,R^2\pi_*\Ql(1))$ are semisimple.  Thus the action
%on $H^2(\XXbar,\Ql(1))$ is semisimple if and only if the action on 
%$H^1(\CCbar,R^1\pi_*\Ql(1))$ is.

Thus we see that the cycle class map induces a well defined
homomorphism 
$$(L^1\NS(\XX)/L^2\NS(\XX))\tensor\Zl
\to H^1(\CCbar,R^1\pi_*\Zl(1))^{G_k}$$
which is surjective if and only if $T_1(\XX,\ell)$ holds.  Since the
source of this map is closely related to $J_X(K)$---it is exactly
$(J_X(K)/\tau B(k))\tensor\Zl$ by the Shioda-Tate theorem and $B(k)$ is
finite---we must have a close connection between the groups
$H^1(\CCbar,R^1\pi_*\Zl(1))^{G_k}$ and $\Sel(J_X,\Zl)$.  We
will prove directly that they differ by a finite group.

We should say sketch a proof, since a complete treatment requires more
details on bad reduction and N\'eron models than we have at our
disposal, but the main ideas should be clear.
 
To state the result, let $K'=K\kbar$ and define another Selmer group
as follows:
$$\Sel(J_x/K',\Zl)=
\varprojlim_n\ker\left(H^1(K',J_X[\ell^n])\to\prod_v H^1(K'_v,J_X)\right)$$
where the product is over the places of $K'$.  

\begin{prop}\label{prop:Sel-H1R1}
  We have homomorphisms $\Sel(J_X,\Zl)\to\Sel(J_X/K',\Zl)^{G_k}$ and
  $H^1(\CCbar,R^1\pi_*\Zl(1))^{G_k}\to\Sel(J_X/K',\Zl)^{G_k}$ with
    finite kernels and cokernels.
\end{prop}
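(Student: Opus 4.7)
The plan is to construct both maps and bound their kernels and cokernels by finite groups, using two ingredients: (i) $G_k = \gal(\kbar/k) \cong \Zhat$ has cohomological dimension $1$ and acts on the relevant Galois modules with bounded fixed parts, and (ii) the local discrepancy between the \emph{everywhere unramified} condition and the \emph{Selmer} condition is concentrated at the finitely many closed points of $\CC$ where $\pi$ fails to be smooth.

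For the first map, I would exploit that for a place $w$ of $K' = K\kbar$ above $v$, the completion $K'_w$ is the maximal unramified extension of $K_v$ (its residue field is $\kbar$), so the restrictions on cohomology are compatible with localization. At each finite level $\ell^n$, inflation-restriction for $\gal(K'/K) = G_k$ reads
$$0 \to H^1(G_k, J_X(K')[\ell^n]) \to H^1(K, J_X[\ell^n]) \to H^1(K', J_X[\ell^n])^{G_k} \to 0,$$
with surjectivity from $\mathrm{cd}(G_k) = 1$. The kernel is finite, of order bounded by $|J_X(K')[\ell^n]^{G_k}| = |J_X(K)[\ell^n]|$ (for a finite module under a pro-cyclic group, $H^1$ and $H^0$ have the same order), hence bounded uniformly in $n$ by the finite torsion of the finitely generated Mordell-Weil group. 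A parallel analysis at each place, combined with the finiteness of the set of places of $K'$ above each $v$, assembles to a map $\Sel(J_X,\Zl) \to \Sel(J_X/K',\Zl)^{G_k}$ with finite kernel and cokernel after passage to the inverse limit.

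For the second map, I would invoke Corollary~\ref{cor:R1H1-cohom} to replace $R^1\pi_*\Zl(1)$ by $j_*j^*R^1\pi_*\Zl(1)$ (losing only a finite kernel on $H^1$), where $j\colon\eta\into\CC$ is the generic point. Since $\kbar$ contains the $\ell$-power roots of unity, $j^*R^1\pi_*\Zl(1) \cong T_\ell J_X$ as a $G_{K'}$-module, and the Leray spectral sequence for $j$ identifies $H^1(\CCbar, j_*T_\ell J_X)$ with the subgroup of $H^1(K', T_\ell J_X)$ consisting of classes whose restriction to every $H^1(K'_w, T_\ell J_X)$ vanishes (the residue field of $K'_w$ being $\kbar$, the stalk of $R^1j_*$ at $w$ is all of $H^1(K'_w, T_\ell J_X)$). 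At level $\ell^n$ this is strictly stronger than the Selmer condition, which asks only that the image in $H^1(K'_w, J_X)$ vanish; the discrepancy at $w$ is exactly the image of $J_X(K'_w)/\ell^n$ in $H^1(K'_w, J_X[\ell^n])$. For $\ell\ne\ch(k)$ and a place of good reduction this image is zero (the formal group is pro-$p$ and $J_X(\kbar)$ is $\ell$-divisible), so the discrepancy is supported on the finite set of bad places, each contributing a finite group uniformly bounded in $n$ by the $\ell$-part of the N\'eron component group. Composing with the map of Corollary~\ref{cor:R1H1-cohom} and taking $G_k$-invariants yields the second map, with the invariants introducing only finite errors of the form $H^i(G_k,\textrm{finite})$, harmless since $\mathrm{cd}(G_k)=1$.

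The main obstacle will be the local analysis at bad reduction: one must show that $J_X(K'_w)/\ell^n$ is finite and uniformly bounded in $n$, which rests on more N\'eron-model theory than is developed in these notes. The bookkeeping across the composition of comparison maps is tedious but mechanical, since every finite error at the $K'$-level produces only a further finite error after passing to $G_k$-invariants.
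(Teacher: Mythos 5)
Your approach matches the paper's overall structure, and the second map is handled correctly and essentially as in the notes: pass to the middle-extension sheaf via Corollary~\ref{cor:R1H1-cohom}, use the Leray sequence for $j\colon\eta\into\CCbar$ to identify $H^1(\CCbar, j_*T_\ell J_X)$ with the everywhere-locally-trivial classes in $H^1(K', T_\ell J_X)$, and observe that the gap between that condition and the Selmer condition at a place $w$ is the image of $J_X(K'_w)\hat\tensor\Zl$, which vanishes at good places and is controlled by the N\'eron component group at bad ones. Your argument for the \emph{kernel} of the first map is also correct and slightly cleaner than the notes': you use that $|H^1(\Zhat, M)| = |H^0(\Zhat, M)|$ for finite $M$ to bound the kernel by $|J_X(K)_{tor}|$, whereas the notes analyze $J_X[\ell^n](K')$ via the $K/k$-trace $B$ and Lang's theorem.

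There is, however, a real gap in your treatment of the \emph{cokernel} of the first map. You invoke ``a parallel analysis at each place, combined with the finiteness of the set of places of $K'$ above each $v$''; but there are infinitely many $v$, so finiteness of each fiber of $w\mapsto v$ is not what matters. Surjectivity of $H^1(K, J_X[\ell^n]) \to H^1(K', J_X[\ell^n])^{G_k}$ lets you lift any class from $\Sel(J_X/K',\ell^n)^{G_k}$, but the failure of the lift to lie in $\Sel(J_X,\ell^n)$ is measured by an element of $\prod_v H^1(G_{k_v}, J_X(\kbar K_v))$, and one must know this product is \emph{finite} --- i.e.\ that the local group vanishes for all but finitely many $v$ and is finite at the remaining ones. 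The notes take exactly this from \cite[I.3.8]{MilneADT}; it is a genuine arithmetic input (resting on Lang's theorem applied to the N\'eron special fiber together with finiteness of component groups), not mechanical bookkeeping. Your closing paragraph flags the analogous local issue for the second map but does not notice that the first map's cokernel has the same, arguably more delicate, dependence on N\'eron-model theory.
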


\begin{proof}[Sketch of proof of Proposition~\ref{prop:Sel-H1R1}]
Inflation-restriction for $G_{K'}\subset G_K$ gives sequences 
\begin{multline*}
0\to H^1(G_k,J_X[\ell^n](K'))\to H^1(G_K,J_X[\ell^n])\to
H^1(G_{K'},J[\ell^n])\\
\to H^2(G_k,J_X[\ell^n](K'))
\end{multline*}
Now $J[\ell^n](K')$ is an extension of (finite) by $B[\ell^n](\kbar)$
where (finite) is bounded independently of $n$, and $B$ is the
$K/k$-trace of $J_X$.  Using Lang's theorem, we have that the kernel
and cokernel of $H^1(G_K,T_\ell J)\to H^1(G_{K'},T_\ell J)$ are finite
and therefore so is the kernel of
$\Sel(J_X,\Zl)\to\Sel(J_X/K',\Zl)^{G_k}$.

To control the cokernel, consider the commutative diagram with exact
rows and columns.
\begin{equation*}
\xymatrix{
&0\ar[d]&0\ar[d]\\
&\Sel(J_X,\ell^n)\ar[r]\ar[d]&\Sel(J_X/K',\ell^n)^{G_k}\ar[d]\\
&H^1(G_K,J_X[\ell^n])\ar[r]\ar[d]&
H^1(G_{K'},J[\ell^n])^{G_k}\ar[d]\\
\prod_v H^1(G_{k_v},J_X(\kbar K_v))\ar[r]
&\prod_v H^1(G_{K_v},J)\ar[r]
&\prod_v\left(\prod_{\vbar|v} H^1(G_{K'_\vbar},J)\right)^{G_k}
}
\end{equation*}

The columns define the Selmer groups and the bottom row is a product
of inflation-restriction sequences.  By \cite[I.3.8]{MilneADT}, the
group on the left is zero for all but finitely many $v$ and is finite
at all $v$.  A diagram chase then shows that
$\Sel(J_X,\Zl)\to\Sel(J_X/K',\Zl)^{G_k}$ has finite cokernel.

Now let $\FF=j_*j^*\Zl(1)$ where $j:\eta\into\CC$ is the inclusion of
the generic point.  By Corollary~\ref{cor:R1H1-cohom}, we may replace
$H^1(\CCbar,R^1\pi_*\Zl(1))$ with $H^1(\CCbar,\FF)$.  The fact
that $\FF$ is a middle extension sheaf gives a $\sha$-like description
of $H^1(\CCbar,\FF)$:  We have an exact sequence
$$0\to H^1(\CCbar,\FF)\to H^1(K',T_\ell J_X)\to
\prod_{\vbar} H^1(K'_{\vbar},T_\ell J_X).$$

On the other hand, $\Sel(J_X/K',\Zl)$ is defined by a similar
sequence, except that instead of vanishing in $H^1(K'_{\vbar},T_\ell J_X)$, a
class in the Selmer group should land in the image of
$J_X(K'_{\vbar})\hat\tensor\Zl\to H^1(K'_{\vbar},T_\ell J_X)$.   But $J(K'_{\vbar})$ is an
extension of a finite group by an $\ell$-divisible group, and for all
but finitely many places $\vbar$ the finite group is trivial.  (This
follows from the structure of the N\'eron model of $J_X$.)  Thus we
get a inclusion $H^1(\CCbar,\FF)\subset \Sel(J_X/K',\Zl)$ with finite
cokernel.  Taking $G_k$-invariants finishes the proof.
\end{proof}

\chapter{Zeta and $L$-functions}\label{ch:zetas}

\section{Zeta functions and $T_2$ for $\XX$}
We quickly review the zeta function and the second Tate conjecture for
$\XX$.  Most of this material was covered in \cite{Ulmer11} and so
we will be very brief.

Throughout this section, $k=\Fq$ is the finite field with $q$
elements, $p$ is its characteristic, and $\XX$ is a surface satisfying
our usual hypotheses (as in Section~\ref{s:XX-intro}). 

\subsection{Zetas}
Let $N_n$ be the number of points on $\XX$ rational over $\F_{q^n}$
and form the $Z$-function
\begin{align*}
Z(\XX,T)&=\prod_{x\in\XX^0}(1-T^{\deg x})^{-1}\\
&=\exp\left(\sum_{n\ge1}N_n\frac{T^n}{n}\right)
\end{align*}
and the zeta function $\zeta(\XX,s)=Z(\XX,q^{-s})$.  

We choose an auxiliary prime $\ell\neq p$.  Then the
Grothendieck-Lefschetz trace formula
$$N_n=\sum_{i=0}^4(-1)^i\tr\left(\Fr^n_q|H^i(\XXbar,\Ql)\right)$$
leads to an expression for $Z(\XX,T)$ as a rational function of $T$:
$$Z(\XX,T)=\prod_{i=0}^4P_i(\XX,T)^{(-1)^{i+1}}$$
where
$$P_i(\XX,T)=\det\left(1-T\,\Fr_q|H^i(\XXbar,\Ql)\right)
=\prod_j(1-\alpha_{ij}T).$$

By Poincar\'e duality, there is a functional equation relating
$\zeta(\XX,1-s)$ and $\zeta(\XX,s)$, and therefore relating
$P_i(\XX,T)$ with $P_{4-i}(\XX,q/T)$.

By Deligne's theorem, the eigenvalues of Frobenius $\alpha_{ij}$ are
Weil numbers of size $q^{i/2}$.  It follows that the zeroes and poles
of $\zeta(\XX,s)$ have real parts in the sets $\{1/2, 3/2\}$ and
$\{0,1,2\}$ respectively, and that the order of pole of $\zeta(\XX,s)$
at $s=1$ is equal to the multiplicity of $q$ as an eigenvalue of
$\Fr_q$ on $H^2(\XXbar,\Ql)$.

\subsection{Tate's second conjecture}
The second main conjecture in Tate's article \cite{Tate65} relates the
$\zeta$ function to the N\'eron-Severi group.

\begin{conj}[$T_2(\XX)$]  We have
$$\rk\NS(\XX)=-\ord_{s=1}\zeta(\XX,s)$$.
\end{conj}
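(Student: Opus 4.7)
The plan is to compare three integers (fixing an auxiliary prime $\ell\neq p$): the rank $\rk\NS(\XX)$, the dimension of $H^2(\XXbar,\Ql(1))^{G_k}$, and $-\ord_{s=1}\zeta(\XX,s)$. The latter two will turn out to be respectively the geometric and algebraic multiplicities of $q$ as an eigenvalue of $\Fr_q$ on $H^2(\XXbar,\Ql)$.

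First I would identify $-\ord_{s=1}\zeta(\XX,s)$ with the algebraic multiplicity. From the factorization $Z(\XX,T)=\prod_{i=0}^4 P_i(\XX,T)^{(-1)^{i+1}}$ and Deligne's bound $|\alpha_{ij}|=q^{i/2}$, the only factor whose roots can contribute a pole at $s=1$ is $P_2(\XX,T)$: indeed the reciprocal roots of $P_0,P_4$ are $1,q^2$ and those of $P_1,P_3$ have absolute values $q^{1/2},q^{3/2}$, so none of them can match $q$. The order of pole is therefore exactly the multiplicity of $q$ as a root of $\det(T-\Fr_q\mid H^2(\XXbar,\Ql))$, i.e.\ the dimension of the generalized $1$-eigenspace of $\Fr_q$ on $H^2(\XXbar,\Ql(1))$.

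Next I would invoke the exact sequence~\eqref{eq:cycleclass} from Subsection~\ref{ss:seqs} and tensor with $\Ql$ to obtain an injection $\NS(\XX)\tensor\Ql\into H^2(\XXbar,\Ql(1))^{G_k}$. Combined with the tautological inequality between geometric and algebraic multiplicity of $1$ as a $\Fr_q$-eigenvalue on $H^2(\XXbar,\Ql(1))$, this yields the unconditional Tate inequality
$$\rk\NS(\XX)\;\le\;\dim_\Ql H^2(\XXbar,\Ql(1))^{G_k}\;\le\;-\ord_{s=1}\zeta(\XX,s).$$

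The hard part, and the reason $T_2(\XX)$ remains open in general, is upgrading both $\le$'s to equalities. Equality on the left is exactly $T_1(\XX,\ell)$ by Proposition~\ref{prop:T1<=>Br}, equivalently the finiteness of $\Br(\XX)[\ell^\infty]$; equality on the right is the semisimplicity of $\Fr_q$ on the generalized $q$-eigenspace of $H^2(\XXbar,\Ql)$. Both statements are themselves deep open conjectures, so the realistic outcome of the plan is the reduction
$$T_2(\XX)\iff T_1(\XX,\ell)+\text{(semisimplicity of }\Fr_q\text{ at eigenvalue }q\text{)},$$
after which one appeals case by case to known instances of both ingredients (K3 surfaces, abelian surfaces, products of curves, surfaces dominated by such products, etc., due to Tate, Artin, Milne, Nygaard--Ogus, and subsequent authors) to deduce $T_2(\XX)$ where possible. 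A secondary benefit of the same reduction is the independence of $T_1(\XX,\ell)$ from $\ell$, since the right-hand side $-\ord_{s=1}\zeta(\XX,s)$ is manifestly independent of $\ell$.
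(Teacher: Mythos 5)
The statement you were asked to prove is a \emph{conjecture} --- Tate's second conjecture $T_2(\XX)$ --- and the paper offers no proof of it; it is open in general. You correctly recognized this, and your outline essentially reproduces the paper's surrounding discussion: identifying $-\ord_{s=1}\zeta(\XX,s)$ with the multiplicity of $q$ as a Frobenius eigenvalue on $H^2(\XXbar,\Ql)$ via Deligne's bounds, injecting $\NS(\XX)\tensor\Ql$ into $H^2(\XXbar,\Ql(1))^{G_k}$ via the sequence \eqref{eq:cycleclass}, and concluding the unconditional inequality $\rk\NS(\XX)\le-\ord_{s=1}\zeta(\XX,s)$ are exactly parts (1) and (2) of Theorem~\ref{thm:T1<>T2}.

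Two caveats on the reduction you propose. First, you present semisimplicity of $\Fr_q$ on the generalized $q$-eigenspace as an independent second ingredient to be verified alongside $T_1(\XX,\ell)$. The content of part (3) of Theorem~\ref{thm:T1<>T2} (Artin--Tate, Milne) is stronger: $T_1(\XX,\ell)$ for a single $\ell$ already \emph{implies} that semisimplicity --- in the paper's big commutative diagram, $T_1$ forces $h$ and $g^*$ to be quasi-isomorphisms, hence $f$ as well, and bijectivity of $f$ is the semisimplicity statement. So in the known cases one only ever checks $T_1$, not a separate semisimplicity hypothesis. Second, your claimed ``secondary benefit'' that $T_1(\XX,\ell)$ is independent of $\ell$ does not follow from your tautological chain of inequalities alone: to pass from $T_1(\XX,\ell)$ to $T_1(\XX,\ell')$ you must first deduce $T_2(\XX)$ from $T_1(\XX,\ell)$, which is precisely the hard implication your reduction defers. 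The $\ell$-independence is thus a consequence of the Artin--Tate--Milne theorem, not of the inequality argument by itself.
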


Recall (Equation~\eqref{eq:cycleclass}) that the cycle class map
induces an injection
$$\NS(\XX)\tensor\Zl\to H^2(\XXbar,\Zl(1))^{G_k}$$
and conjecture $T_1(\XX,\ell)$ was the statement that this map is an
isomorphism.  When $\ell\neq p$, $H^2(\XXbar,\Zl(1))^{G_k}\cong
H^2(\XXbar,\Zl)^{\Fr_q=q}$, where the latter is the subspace where the
$q$-power Frobenius acts by multiplication by $q$.  When $\ell=p$,
$H^2(\XXbar,\Zl(1))^{G_k}$ is isomorphic to
$H^2_{cris}(\XX/W(k))^{\Fr=p}$, the subgroup of crystalline cohomology
where the absolute Frobenius acts by multiplication by $p$.  Thus, in
either case the $\Z_\ell$-rank of the target of the cycle class map
(an eigenspace) is bounded above by the order of vanishing of the
$\zeta$ function (which is the multiplicity of the corresponding
eigenvalue).  This proves the first two parts of the following.

\begin{thm}[Artin-Tate, Milne]\label{thm:T1<>T2}  We have:
\begin{enumerate}
\item $\rk\NS(\XX)\le-\ord_{s=1}\zeta(\XX,s)$
\item $T_2(\XX)$ \textup{(}i.e., equality in (1)\textup{)} implies $T_1(\XX,\ell)$
  \textup{(}and thus finiteness of $\Br(\XX)[\ell^\infty]$\textup{)}
  for all $\ell$.
\item $T_1(\XX,\ell)$ for any one $\ell$ \textup{(}$\ell=p$ 
  allowed\textup{)} implies $T_2(\XX)$.
\end{enumerate}
\end{thm}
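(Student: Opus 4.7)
The proof works by translating everything to eigenvalues of $\Fr_q$ on $H^2(\XXbar,\Ql)$. Write $m_{\mathrm{geom}}$ for the dimension of the $q$-eigenspace of $\Fr_q$ and $m_{\mathrm{alg}}$ for its algebraic multiplicity as an eigenvalue; the factorisation of $Z(\XX,T)$ through $P_2$ gives $m_{\mathrm{alg}} = -\ord_{s=1}\zeta(\XX,s)$, while linear algebra gives $m_{\mathrm{geom}} \le m_{\mathrm{alg}}$. Part (1) is then immediate: tensoring \eqref{eq:cycleclass} with $\Ql$ yields $\rk\NS(\XX) \le m_{\mathrm{geom}} \le m_{\mathrm{alg}}$. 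Part (2) is equally direct: $T_2(\XX)$ says $\rk\NS(\XX) = m_{\mathrm{alg}}$, which by the chain above forces all three quantities to coincide, so the cycle class map becomes an isomorphism over $\Ql$ (i.e.\ $T_1(\XX,\ell)$), and from \eqref{eq:cycleclass} we read $T_\ell\Br(\XX) = 0$, whence $\Br(\XX)[\ell^\infty]$ is finite by Proposition~\ref{prop:T1<=>Br}.

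Part (3) is the heart of the matter. Assume $T_1(\XX,\ell_0)$ for some prime $\ell_0$. By Proposition~\ref{prop:T1<=>Br}, $T_{\ell_0}\Br(\XX) = 0$, and \eqref{eq:cycleclass} tensored with $\Ql$ yields an isomorphism $\NS(\XX)\otimes\Ql \cong H^2(\XXbar,\Ql(1))^{G_k}$ (with $H^2_{\mathrm{cris}}(\XX/W(k))^{\Fr=p}\otimes\Qp$ replacing the $\ell$-adic cohomology when $\ell_0 = p$, via the identification recalled just above the theorem statement). Hence $\rk\NS(\XX) = m_{\mathrm{geom}}$, and to reach $T_2(\XX)$ it remains to show $m_{\mathrm{geom}} = m_{\mathrm{alg}}$, i.e.\ that $\Fr_q$ acts semisimply at eigenvalue $q$ on $H^2(\XXbar,\Ql)$. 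Establishing this semisimplicity from $T_1$ is the main obstacle.

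The plan is to use the Poincar\'e duality cup-product $\langle\cdot,\cdot\rangle$ on $V := H^2(\XXbar,\Ql(1))$, which is perfect, symmetric, and Frobenius-invariant ($\langle\Fr_q a,\Fr_q b\rangle = \langle a,b\rangle$). A standard induction on Jordan depth shows that distinct generalised eigenspaces of $\Fr_q$ on $V$ are orthogonal under $\langle\cdot,\cdot\rangle$ whenever the two eigenvalues are not reciprocal; in particular the generalised $1$-eigenspace $V''$ pairs non-degenerately with itself. Via the cycle class isomorphism from $T_1$, the restriction of the pairing to the $1$-eigenspace $V_1 := V^{\Fr_q=1}$ coincides with the intersection form on $\NS(\XX)\otimes\Q$, which is non-degenerate by the Hodge index theorem. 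Suppose for contradiction that $V_1 \subsetneq V''$; Jordan form provides $w \in V''$ at level exactly two, so $v := (\Fr_q - 1)w$ is a non-zero element of $V_1$. For any $a \in V_1$, Frobenius-invariance gives
\[
\langle a, w\rangle \;=\; \langle \Fr_q a, \Fr_q w\rangle \;=\; \langle a, w + v\rangle \;=\; \langle a, w\rangle + \langle a, v\rangle,
\]
hence $\langle a, v\rangle = 0$ for every $a \in V_1$; non-degeneracy of the pairing on $V_1$ then forces $v = 0$, a contradiction. Thus $V_1 = V''$, giving $m_{\mathrm{geom}} = m_{\mathrm{alg}}$ and hence $T_2(\XX)$. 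For $\ell_0 = p$ the identical argument runs in crystalline cohomology with the Berthelot--Illusie cup product, the relevant Frobenius-compatibility becoming $\langle \Fr a, \Fr b\rangle = p^2\langle a, b\rangle$ on $H^2_{\mathrm{cris}}(\XX/W(k))$, so that the $p$-eigenspace of the absolute Frobenius pairs with itself and the same Jordan-block contradiction applies.
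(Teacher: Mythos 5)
Your proof is correct. Parts (1) and (2) are handled exactly as in the paper: the target of the cycle class map is the $q$-eigenspace of $\Fr_q$ on $H^2(\XXbar,\Ql)$ (or the $\Fr=p$ part of crystalline cohomology), whose dimension is bounded by the algebraic multiplicity of $q$, i.e.\ by $-\ord_{s=1}\zeta(\XX,s)$. For part (3) you take a genuinely more direct route than the paper. The paper (following Tate and Milne) sets up an integral commutative diagram relating $\NS(\XX)\tensor\Zhat$, $H^2(\XXbar,T\mu)^{G_k}$ and its coinvariants, and the Pontryagin dual of $H^2(\XX,\mu(\infty))^{G_k}$; it deduces from $T_1(\XX,\ell)$ that the maps $h$ and $g^*$ are quasi-isomorphisms, and since $e$ (intersection form) and $j$ (Poincar\'e duality plus Hochschild--Serre) always are, the invariants-to-coinvariants map $f$ must be one too, which is precisely semisimplicity of $\Fr_q$ at the eigenvalue $1$ on $H^2(\XXbar,\Ql(1))$. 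Your Jordan-block computation with the cup product is an explicit rational-coefficient unwinding of exactly this step: the same ingredients appear (Frobenius-equivariance of the Poincar\'e pairing, orthogonality of non-reciprocal generalized eigenspaces, and---crucially---non-degeneracy of the intersection form on $\NS(\XX)\tensor\Q$ transported to the honest eigenspace via the cycle class isomorphism supplied by $T_1$), and your identification of $v=(\Fr_q-1)w$ as an element of the radical of the pairing on $V_1$ is precisely why the diagram forces $f$ to be a quasi-isomorphism. What your version buys is a self-contained, elementary argument for the order-of-vanishing statement alone; what the paper's integral diagram buys is the bookkeeping of the finite kernels and cokernels (the quantities $z(\phi)$) that is needed later to extract the refined leading-coefficient formula in Theorem~\ref{thm:T2=>AT}. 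Your remark on the $\ell=p$ case is slightly loose (the absolute Frobenius is $\sigma$-semilinear, so one should linearize by passing to $F^f$ and the eigenvalue $q$, with generalized eigenspaces for $\alpha,\beta$ pairing to zero unless $\alpha\beta=q^2$), but this matches the level of detail of the paper's own sketch, which defers the genuine $p$-adic difficulties to the de\thinspace Rham--Witt discussion.
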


We sketch the proof of the last part in the next subsection.  The
prime-to-$p$ was discussed/sketched in \cite[Lecture 2, 10.2]{Ulmer11}
and the general case is similar, although it uses more sophisticated
cohomology.

\subsection{Artin-Tate conjecture}
We define two more invariants which enter into the Artin-Tate
conjecture below.

Recall that there is a symmetric, integral intersection pairing on the
N\'eron-Severi group which gives $\NS(\XX)/tor$ the structure of a
lattice.  We define the regulator $R(\XX)$ to be the discriminant of
this lattice.  More precisely
$$R=R(\XX)=\left|\det\left(D_i.D_j\right)_{i,j=1,\dots,\rho}\right|$$
where $D_i$ ($i=1,\dots,\rho$) is a basis of $\NS(\XX)/tor$ and the
dot denotes the intersection product.

We also define a Tamagawa-like factor $q^{-\alpha(\XX)}$ where
$$\alpha=\alpha(\XX)=\chi(\XX,\OO_\XX)-1+\dim\Picvar(\XX).$$ 

The following was inspired by the $BSD$ conjecture for the Jacobian of
$X$.

\begin{conj}[$AT(\XX)$] $T_2(\XX)$ holds, $\Br(\XX)$ is finite, and
  we have the asymptotic
$$\zeta(\XX,s)\sim
\frac{R\,|\!\Br(\XX)|\,q^{-\alpha}}{|\!\NS(\XX)_{tor}|^2}
(1-q^{1-s})^{\rho(\XX)}$$
as $s\to1$.
\end{conj}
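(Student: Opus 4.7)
The plan is to derive $AT(\XX)$ from $T_1(\XX,\ell)$ for some prime $\ell$; by Theorem~\ref{thm:T1<>T2} this input already yields $T_2(\XX)$, and by Proposition~\ref{prop:T1<=>Br} it yields the finiteness of $\Br(\XX)[\ell^\infty]$. Combined with an $\ell$-independence argument (coming from comparing ranks of $H^2(\XXbar,\Zl(1))^{G_k}$ via $T_2$), this gives the finiteness of $\Br(\XX)$ as a whole. The unresolved content is therefore purely the asymptotic leading coefficient of $\zeta(\XX,s)$ at $s=1$.

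First I would factor $\zeta(\XX,s) = \prod_{i=0}^4 P_i(\XX,q^{-s})^{(-1)^{i+1}}$. By Deligne's Weil bound, the factors coming from $H^0, H^1, H^3, H^4$ contribute nonzero finite values at $s=1$, so the full singular behavior is concentrated in $P_2(\XX,q^{-s})$. Write $P_2(\XX,T) = (1-qT)^{\rho(\XX)} Q(T)$ with $Q(1/q)\neq 0$; under $T_1(\XX,\ell)$ the multiplicity of the eigenvalue $q$ of $\Fr_q$ on $H^2(\XXbar,\Ql(1))^{G_k}$ is exactly $\rho(\XX)=\rk\NS(\XX)$. The problem then reduces to computing $Q(1/q)$, i.e.\ the ``derived determinant'' of $1-q^{-1}\Fr_q$ on the complement of the fixed eigenspace, and matching it against $R\,|\!\Br(\XX)|\,q^{-\alpha}/|\!\NS(\XX)_{tor}|^2$.

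The core of the proof is a linear-algebra computation rooted in Poincar\'e duality. The cup product gives a perfect Galois-equivariant pairing $H^2(\XXbar,\Zl(1))\times H^2(\XXbar,\Zl(1)) \to \Zl$, whose restriction to the cycle class image of $\NS(\XX)\tensor\Zl$ is the intersection form, and thus has discriminant $R(\XX)$ on $\NS(\XX)/tor$. Combining the exact sequence \eqref{eq:cycleclass} with its dual (obtained from coinvariants) gives a commutative square relating $\NS(\XX)\tensor\Zl$ to its $\Zl$-linear dual by two routes: the intersection pairing on one side, and the cup product precomposed/postcomposed with the cycle class maps on the other. Comparing the two routes via the snake lemma, the cokernel of the cycle class map (controlled by $T_\ell\Br(\XX)$, which vanishes under $T_1$) and its Pontryagin dual together contribute $|\!\Br(\XX)|$; the torsion in $\NS(\XX)$ contributes $|\!\NS(\XX)_{tor}|^2$, squared because it appears in both source and dual; and $R(\XX)$ appears as the discriminant of the pairing. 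Running this for all $\ell\neq p$ pins down the formula up to a $p$-part.

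The hardest step, and the principal obstacle, is matching the $p$-part and extracting the Tamagawa-like factor $q^{-\alpha(\XX)}$. Here $\ell$-adic cohomology must be replaced by the crystalline cohomology $H^2_{cris}(\XX/W(k))$, equipped with its Frobenius, as in Milne's work; one reruns the argument above with the crystalline cycle class map and the crystalline cup product. The discrepancy between the $\Zl$-lattice $H^2(\XXbar,\Zl(1))^{G_k}$ for $\ell\neq p$ and the integral $W(k)$-lattice on the slope-one part of $H^2_{cris}$ is measured, via the Hodge--Newton decomposition together with the Hodge exact sequence $0\to H^1(\XX,\OO_\XX)\to\cdots$, precisely by $\chi(\XX,\OO_\XX)-1+\dim\Picvar(\XX)=\alpha(\XX)$. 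Establishing the crystalline cycle-class exact sequence, controlling $p$-torsion in the relevant flat and de~Rham--Witt cohomology (which is where cohomological flatness of $\pi$ would enter through $R^1\pi_*\OO_\XX$), and identifying $q^{-\alpha}$ as the exact correction between the integral crystalline leading coefficient and the $\ell$-adic one are the delicate parts and the place where the argument ceases to be formal.
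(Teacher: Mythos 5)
The statement you are trying to prove is a \emph{conjecture}, not a theorem, and the paper does not prove it: what it records is the implication ``$T_2(\XX)\implies AT(\XX)$'' (Theorem~\ref{thm:T2=>AT}, due to Artin--Tate and Milne), whose proof is only sketched with a reference to \cite{Milne75}. Your proposal begins by \emph{assuming} $T_1(\XX,\ell)$ for some prime $\ell$, which is exactly the open part of the problem. For a general smooth projective surface over a finite field, $T_1(\XX,\ell)$ is not known --- it is established only for special classes (products of curves and varieties dominated by them as in Corollary~\ref{cor:dpc}, certain $K3$ surfaces \cite{ArtinSwinnertonDyer73}, \cite{NygaardOgus85}, etc.). So even if every subsequent step were carried out in full, you would have proved the known conditional implication $T_1(\XX,\ell)\implies AT(\XX)$, not the conjecture $AT(\XX)$ itself. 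This is the genuine gap: there is no unconditional argument available, and your opening sentence quietly converts the problem into a different (and already settled) one.

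Granting the hypothesis $T_1(\XX,\ell)$, your sketch of the implication is broadly faithful to the Artin--Tate--Milne strategy that the paper outlines for Theorem~\ref{thm:T2=>AT}: the paper's version organizes the duality/linear-algebra comparison into a single commutative diagram with maps $e$, $h$, $f$, $j$, $g^*$ and exploits the multiplicativity $z(e)=z(h)z(f)z(j)z(g^*)$ of the kernel/cokernel invariant to extract $R$, $|\!\Br(\XX)|$, $|\!\NS(\XX)_{tor}|^2$, and the power $q^{-\alpha}$; your ``two routes from $\NS(\XX)\tensor\Zl$ to its dual'' is the same bookkeeping in different clothing. Two further cautions. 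First, deducing finiteness of all of $\Br(\XX)$ (not just its $\ell$-primary part for each $\ell$) requires showing $\Br(\XX)[\ell]=0$ for all but finitely many $\ell$; this comes out of the identification of $z(g^*)$ as a rational number rather than merely a supernatural number, and cannot be dismissed as ``an $\ell$-independence argument on ranks.'' Second, you correctly identify the $p$-part as the hard point, but note that the restriction $p\neq 2$ in \cite{Milne75} was only removed later via de\thinspace Rham--Witt cohomology; any complete write-up must route the $\ell=p$ comparison through that machinery rather than through a naive crystalline analogue of the Kummer sequence.
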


The analysis showing that $T_1(\XX,\ell)\implies T_2(\XX)$ can be
pushed further to show that $T_2(\XX)\implies AT(\XX)$.  Indeed, we
have the following spectacular result:

\begin{thm}[Artin-Tate, Milne]\label{thm:T2=>AT}
If $T_2(\XX)$ holds, then so does $AT(\XX)$.
\end{thm}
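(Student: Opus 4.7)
First, I would observe that by Theorem~\ref{thm:T1<>T2} the hypothesis $T_2(\XX)$ forces $T_1(\XX,\ell)$ for every prime $\ell$; combined with~\eqref{eq:cycleclass} and the torsion-freeness of $T_\ell\Br(\XX)$, this already gives $T_\ell\Br(\XX)=0$ for all $\ell$, hence finiteness of each $\ell$-primary part of $\Br(\XX)$. Finiteness of the full Brauer group then follows from the standard observation that outside a finite set of primes (those dividing $|\NS(\XX)_{tor}|$ or the discriminant of the intersection form on $\NS(\XX)$, and those $\ell=\ch(k)$) the integral cycle class map $\NS(\XX)\tensor\Zl\to H^2(\XXbar,\Zl(1))^{G_k}$ is already an isomorphism, so that $\Br(\XX)[\ell]=0$ for almost all $\ell$.

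The main task is the leading-coefficient formula. Factor $P_2(\XX,T)=(1-qT)^\rho Q(T)$ with $Q(1/q)\neq 0$; since $P_0,P_1,P_3,P_4$ are non-zero at $T=1/q$, knowing $Q(1/q)$ determines the leading coefficient of $\zeta(\XX,s)$ at $s=1$ up to an explicit rational factor in $q$. For each $\ell\neq p$ I would work in $V_\ell=H^2(\XXbar,\Ql(1))$ with $F=q^{-1}\Fr$ and express
$$Q(1/q)=\pm\det\bigl(1-F^{-1}\,\big|\,V_\ell/V_\ell^{F=1}\bigr);$$
Poincar\'e duality supplies a perfect, Frobenius-compatible pairing on the integral lattice $\Lambda_\ell:=H^2(\XXbar,\Zl(1))/\text{(torsion)}$. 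A now-standard discriminant computation (the outline is in Tate's Bourbaki expos\'e) expresses $Q(1/q)$, up to an $\ell$-adic unit, as the discriminant of the induced pairing on the Frobenius-invariants $\Lambda_\ell^{G_k}$. Feeding in the cycle class sequence~\eqref{eq:cycleclass} and the fact that the cycle class map intertwines the cup product with the intersection form on $\NS(\XX)$, this discriminant becomes
$$R\cdot|\Br(\XX)[\ell^\infty]|\cdot|\NS(\XX)_{tor}[\ell^\infty]|^{-2}$$
up to $\ell$-units. Taking the product over $\ell\neq p$ recovers the claimed asymptotic away from $p$.

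The hard part is the $p$-adic factor $q^{-\alpha}$; this is Milne's contribution and is the main obstacle. One runs the argument either in flat cohomology of $\mu_{p^n}$ or in crystalline cohomology $H^2_{cris}(\XX/W(k))$ equipped with its Frobenius $F$. The essential difficulty is that the integral crystalline lattice no longer carries a Frobenius-equivariant perfect self-pairing over $W(k)$, because the Newton slopes of $F$ on $H^2$ are not all integers, so the pairing is only perfect up to a $p$-power correction dictated by the slope filtration. One controls this discrepancy using the degeneration of the Hodge (or de Rham-Witt) spectral sequence, and the net multiplicative correction to the discriminant calculation turns out to be precisely $q^\alpha$ with $\alpha=\chi(\XX,\OO_\XX)-1+\dim\Picvar_{\XX/k}$. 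Splicing this $p$-adic correction into the $\ell\neq p$ calculation yields $AT(\XX)$ in full.
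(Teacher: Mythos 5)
Your outline correctly identifies the ingredients (Poincar\'e duality, the cycle class exact sequence~\eqref{eq:cycleclass}, a discriminant computation, the $p$-adic correction), but there is a genuine gap in the finiteness step, and it is exactly the point that forces Artin--Tate--Milne into the diagram bookkeeping that the paper sketches.

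From $T_1(\XX,\ell)$ you correctly get $T_\ell\Br(\XX)=0$, hence $\Br(\XX)[\ell^\infty]$ is finite for each $\ell$ separately (Proposition~\ref{prop:T1<=>Br}). But finiteness of the full group requires $\Br(\XX)[\ell]=0$ for almost all $\ell$, and the ``standard observation'' you invoke does not deliver this. The integral cycle class map $\NS(\XX)\tensor\Zl\to H^2(\XXbar,\Zl(1))^{G_k}$ being an isomorphism for every $\ell$ is an immediate consequence of $T_1$ and already does not say $\Br(\XX)[\ell]=0$: $\Br(\XX)[\ell]$ is computed from the Kummer sequence on $\XX$ itself and, via Hochschild--Serre, picks up a contribution from $H^1(G_k,H^1(\XXbar,\mu_\ell))$ in addition to the $H^0(G_k,H^2)$ piece. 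Your proposed list of bad primes --- those dividing $|\NS(\XX)_{tor}|$, the discriminant of the intersection form, and $p$ --- omits the primes where $H^*(\XXbar,\Zl)$ has torsion and the primes dividing $\det\bigl(1-\Fr\,|\,H^1(\XXbar,\Zl(1))\bigr)$; without excluding the latter, the coinvariants term need not vanish. So the claim is neither standard nor correctly formulated as stated, and since your leading-coefficient paragraph also relies on ``taking the product over $\ell\neq p$'' with almost all factors equal to $1$, the same gap contaminates that step.

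The paper gets around this by an indirect argument: it builds the commutative diagram with maps $e,h,f,j,g^*$, assigns to each quasi-isomorphism $\phi$ the ratio $z(\phi)=\#\ker(\phi)/\#\coker(\phi)$, and uses multiplicativity $z(e)=z(h)z(f)z(j)z(g^*)$. After showing $z(e),z(h),z(f),z(j)$ are genuine rational numbers and identifying $z(g^*)$ with $|\Br(\XX)|$ as a supernatural number, finiteness of $\Br(\XX)$ falls out of the rationality of $z(g^*)$, with no need to enumerate bad primes. The same identity then yields the leading-coefficient formula, once $z(f)$ is computed from the zeta function and the $p$-adic cohomology gives $q^{-\alpha}$. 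Your discriminant computation is morally the content of $z(e)$ and part of $z(f)$, but without the $z(g^*)$-rationality trick you still need an independent (and currently missing) argument for the almost-everywhere vanishing of $\Br(\XX)[\ell]$.
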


To prove this, we must show that for all but finitely many $\ell$,
$\Br(\XX)[\ell]$ is trivial, and also relate the order of $\Br(\XX)$
and other invariants to the leading term of the zeta function.

The proofs of the third part of Theorem~\ref{thm:T1<>T2} and
Theorem~\ref{thm:T2=>AT} proceed via a careful consideration of the
following big commutative diagram:
\begin{equation*}
\xymatrix@C-15pt{
\NS(\XX)\tensor\Zhat\ar[d]_h\ar[r]^>>>>{e}&
\Hom(\NS(\XX)\tensor\Zhat,\Zl)\ar@{=}[r]&
\Hom(\NS(\XX)\tensor\Q/\Z,\Q/\Z)\\
H^2(\XXbar,T\mu)^{G_k}\ar[r]^f&H^2(\XXbar,T\mu)_{G_k}\ar[r]^>>>>{j}&
\Hom(H^2(\XX,\mu(\infty))^{G_k},\Q/\Z)\ar[u]_{g^*}}
\end{equation*}

Here $H^2(\XXbar,T\mu)$ means the inverse limit over $n$ of the flat
cohomology groups $H^2(\XXbar,\mu_n)$ and $H^2(\XX,\mu(\infty))$ is
the direct limit over $n$ of the flat cohomology groups
$H^2(\XX,\mu_n)$.  The map $e$ is induced by the intersection form,
$h$ is the cycle class map, $f$ is induced by the identity map of
$H^2(\XXbar,T\mu)$, $g^*$ is the transpose of a map
$$g:\NS(\XX)\tensor\Q/\Z\to H^2(\XX,\mu(\infty))$$
obtained by taking the direct limit over positive integers $n$ of the 
Kummer map $\NS(\XX)\tensor\Z/n\Z\to H^2(\XX,\mu_n))$, and $j$ is
induced by the Hochschild-Serre spectral sequence and Poincar\'e duality.

We say that a homomorphism $\phi:A\to B$ of abelian groups is a {\it
  quasi-isomorphism\/} if it has a finite kernel and cokernel.  In this
case, we define 
$$z(\phi)=\frac{\#\ker(\phi)}{\#\coker(\phi)}.$$  
It is easy to check that if $\phi_3=\phi_2\phi_1$ (composition) and if
two of the maps $\phi_1$, $\phi_2$, $\phi_3$ are quasi-isomorphisms,
then so is the third and we have $z(\phi_3)=z(\phi_2)z(\phi_1)$.

In the diagram above, the map $e$ is a quasi-isomorphism and $z(e)$ is
the order of the torsion subgroup of $\NS(\XX)$ divided by the
discriminant of the intersection form.  The map $j$ is also a
quasi-isomorphism and $z(j)$ is the order of the torsion subgroup of
N\'eron-Severi times a power of $q$ determined by a certain $p$-adic
cohomology group.

Now assume $T_1(\XX,\ell)$ for one $\ell$ and consider the
$\ell$-primary part of the diagram above.  The assumption
$T_1(\XX,\ell)$ implies that $\ell$ part of the maps $h$ and $g^*$ are
quasi-isomorphisms.  Thus the $\ell$ part of $f$ must also be a
quasi-isomorphism.  This means that $\Fr_q$ acts semisimply on the
generalized eigenspace of $H^2(\XXbar,\Ql(1))$ corresponding to the
eigenvalue 1 and this implies $T_2(\XX)$.

Now assume $T_2(\XX)$, so we have that $T_1(\XX,\ell)$ holds for all
$\ell$ and that $h$ is an isomorphism.  A fairly intricate analysis
leads to a calculation of $z(f)$ in terms of the zeta function of
$\XX$ and shows that $z(g^*)$ (as a ``supernatural number'') is the
order of the Brauer group.  Since all the other $z$'s in the diagram
are rational numbers, so is $z(g^*)$ and we get finiteness of
$\Br(\XX)$.  The product formula $z(e)=z(h)z(f)z(j)z(g^*)$ leads,
after some delicate work in $p$-adic cohomology, to the leading
coefficient formula in the conjecture $AT(\XX)$.  We refer to Milne's
paper \cite{Milne75} for the details.

\subsection{The case $p=2$ and 
de\thinspace Rham-Witt cohomology}
In Milne's paper \cite{Milne75}, the case $\ell=p=2$ had to be
excluded due to the state of $p$-adic cohomology at the time.  More
complete results are available, and so this restriction is no longer
needed.  This is pointed out by Milne on his web site and was implicit
in the more general results in \cite{Milne86a}.

In slightly more detail, what is needed is a bridge between
crystalline cohomology (which calculates the zeta function and
receives cycle classes) and flat cohomology (which is closely related
to the Brauer group).  This bridge is provided by the cohomology of
the de\thinspace Rham-Witt complex, generalizing Serre's Witt vector
cohomology.  Such a theory was initiated by Bloch, whose approach
required $p>2$, and this is the source of the original restriction in
\cite{Milne75}.  Soon afterward, Illusie developed a different
approach without restriction on the characteristic, and important
further developments were given by Illusie-Raynaud, Ekedahl, and
Milne.

Briefly, the theory provides groups $H^j(\XX,W\Omega^i)$ with
operators $F$ and $V$ and a spectral sequence
$$E_1^{ij}=H^j(\XX,W\Omega^i)\implies H^{i+j}_{cris}(\XX/W)$$
which degenerates at $E_1$ modulo torsion.  There are also variants
involving sheaves of cycles $ZW\Omega^i$, boundaries $BW\Omega^i$, and
logarithmic differentials $W\Omega^i_{log}$.  The various cohomology
groups (modulo torsion) are related by the spectral sequence to pieces
of crystalline cohomology defined by ``slope'' conditions, i.e., by
the valuations of eigenvalues of Frobenius.  The cohomology of the
logarithmic differentials for $i=1$ is closely related to the flat
cohomology
$$H^j(\XX,\Zp(1))=\varprojlim_n H^j(\XX,\mu_{p^n}).$$

The torsion in de\thinspace Rham-Witt cohomology can be ``large'' and
it provides further interesting invariants.

I recommend \cite{Illusie79a} and \cite{Illusie83} for a thorough
overview of the theory and \cite{Milne86a} for further important
developments.

\section{$L$-functions and the $BSD$ conjecture for $J_X$}
In this section we again assume that $k=\Fq$ and we consider $X$ and
$J_X$ satisfying hypotheses as usual.  The discussion will be parallel
to that of the preceding section.

\subsection{$L$-functions}
Choose an auxiliary prime $\ell\neq p$ and consider the $\ell$-adic
Tate module
$$V_\ell J_X=\left(\varprojlim_{n}
  J_X[\ell^n](\Kbar)\right)\tensor_\Zl\Ql$$ 
equipped with the natural action of $G_K$.  There is are canonical
isomorphisms 
$$V_\ell J_X\cong H^1(\overline{J}_X,\Ql)^*\cong
H^1(\Xbar,\Ql)^*=H^1(\Xbar,\Ql(1))$$ 
where the $*$ indicates the dual vector space.  

We define
$$L(J_X,T)=\prod_{v}\det(1-T\,\Fr_v|V_\ell^{I_v})$$
where the product runs over the places of $K$, $I_v$ denotes an
inertia group at $v$, and $\Fr_v$ denotes the {\it arithmetic\/}
Frobenius at $v$.  (Alternatively, we could use $H^1(\Xbar,\Ql)$ and
the geometric Frobenius.)  Also, we write $L(J_X,s)$ for the
$L$-function above with $T=q^{-s}$.  The product defining $L(J_X,s)$
converges in a half plane and the cohomological description below
shows that it is a rational function of $q^{-s}$ and so extends
meromorphically to all $s$.

Let $j:U\into\CC$ be a non-empty open subset over which $X$ has good
reduction.  Then the representation $G_K\to\aut(H^1(\Xbar,\Ql))$ is
unramified at all places $v\in U$ and so defines a lisse sheaf $\FF_U$
over $U$.  We set $\FF=j_*\FF_U$.  The resulting constructible sheaf
on $\CC$ is independent of the choice of $U$.  Its stalk $\FF_\xbar$
at a geometric point $\xbar$ over closed point $x\in \CC$ is the group
of inertial invariants $H^1(\Xbar,\Ql)^{I_x}$.

The Grothendieck-Lefschetz trace formula for $\FF$ reads
$$\sum_{x\in\CC(\F_{q^n})}\tr(\Fr_x|\FF_{\xbar})=
\sum_i(-1)^i\tr\left(\Fr_{q_n}|H^i(\CCbar,\FF))\right)$$
and this leads to a cohomological expression for the $L$-function as a
rational function in $T$:
\begin{equation}\label{eq:L-cohom}
L(J_X,T)=\prod_{i=0}^2Q_i(T)^{(-1)^{i+1}}
\end{equation}
where
$$Q_i(J_X,T)=\det\left(1-T\,\Fr_q|H^i(\CCbar,\FF)\right)
=\prod_{j}(1-\beta_{ij}T).$$

By Deligne's theorem, 
%[needs some justification that the situation is not ``mixed''], 
the $\beta_{ij}$ are Weil integers of size $q^{(i+1)/2}$.

It is convenient to have a criterion for $L(J_X,T)$ to be a polynomial.

\begin{lemma}
  $L(J_X,T)$ is a polynomial in $T$ if and only if
  $H^0(\CCbar,\FF)=H^2(\CCbar,\FF)=0$ if and only if the $K/k$-trace
  $B=\tr_{K/k}J_X$ is zero.
\end{lemma}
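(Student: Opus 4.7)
The cohomological formula \eqref{eq:L-cohom} reads $L(J_X,T) = Q_1(T)/(Q_0(T)Q_2(T))$. By Deligne's theorem the reciprocal roots of $Q_i$ are $q$-Weil numbers of absolute value $q^{(i+1)/2}$, so $Q_0 Q_2$ and $Q_1$ are coprime in $\Ql[T]$; hence $L(J_X,T)$ is a polynomial if and only if $Q_0 = Q_2 = 1$, equivalently $H^0(\CCbar,\FF) = H^2(\CCbar,\FF) = 0$. This disposes of the first equivalence.

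To link the two vanishings, I would invoke Poincar\'e duality on the smooth projective curve $\CCbar$. The principal polarization on $J_X$ together with the Weil pairing gives a self-duality $\FF_U^\vee \cong \FF_U(1)$ on any good-reduction open $U \subset \CC$, and this extends canonically to the middle extension $\FF = j_*\FF_U$. Poincar\'e duality then forces $\dim_\Ql H^0(\CCbar,\FF) = \dim_\Ql H^2(\CCbar,\FF)$, so one vanishes if and only if the other does.

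The substantive step is to show $H^0(\CCbar,\FF) = 0 \iff B = 0$. Since $\FF = j_*\FF_U$ with $\FF_U$ lisse on $\overline U$, and since the monodromy action on $\FF_U$ factors through the quotient $\pi_1(\overline U, \etabar)$ of $G_{K\kbar}$, we have
$$H^0(\CCbar,\FF) = H^1(\Xbar,\Ql)^{\pi_1(\overline U)} = H^1(\Xbar,\Ql)^{G_{K\kbar}},$$
which vanishes iff $(V_\ell J_X)^{G_{K\kbar}} = 0$. One direction is easy: the trace $\tau : B_K \to J_X$ has finite kernel and induces an injection $V_\ell B \hookrightarrow V_\ell J_X$ whose image lies in the $G_{K\kbar}$-invariants (the $G_K$-action on $V_\ell B_K = V_\ell B$ factors through $G_k$), so $B \neq 0 \Rightarrow H^0(\CCbar,\FF) \neq 0$. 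For the converse I would invoke the function-field analogs of Tate's conjecture and semisimplicity for abelian varieties (Zarhin, Moret-Bailly): under these, a nonzero $G_{K\kbar}$-invariant subspace of $V_\ell J_X$ corresponds, up to isogeny, to a $K$-abelian subvariety $A \subset J_X$ whose base change to $K\kbar$ is isogenous to a constant abelian variety, and such an $A$ contributes nontrivially to $B$ by the defining universal property of the trace.

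The main obstacle is precisely this last implication, which rests on the function-field analogs of Faltings's semisimplicity and isogeny theorems, i.e., that $V_\ell J_X$ is semisimple as a $G_K$-module and that abelian subvarieties of $J_X$ correspond (up to isogeny) to $G_K$-stable $\Ql$-subspaces of $V_\ell J_X$. Granting these, $(V_\ell J_X)^{G_{K\kbar}}$ is identified, up to finite index, with the Tate module of the image of $\tau$, which completes the argument.
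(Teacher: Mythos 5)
Your handling of the first equivalence and of the link between $H^0$ and $H^2$ matches the paper: $L=Q_1/(Q_0Q_2)$, the weight separation forces $Q_0=Q_2=1$ whenever $L$ is a polynomial, and autoduality of $\FF$ up to a Tate twist gives $\dim H^0(\CCbar,\FF)=\dim H^2(\CCbar,\FF)$. The implication $B\neq0\Rightarrow H^0(\CCbar,\FF)\neq0$ via $V_\ell B\subset (V_\ell J_X)^{G_{K\kbar}}$ is also exactly the paper's argument.

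For the remaining implication the paper does something much lighter than what you propose. Since $H^0(\CCbar,\FF)=\FF_{\etabar}^{\pi_1(\overline{U})}$ is (up to a Tate twist which is invisible over $\kbar K$) just $\varprojlim_n J_X[\ell^n](\kbar K)$ tensored with $\Ql$, it suffices to observe that if $B=0$ then the Lang--N\'eron theorem, already quoted in Section~\ref{s:LangNeron}, says $J_X(\kbar K)$ is finitely generated; its torsion subgroup is then finite and the inverse limit vanishes. No input beyond Lang--N\'eron is needed. Your route through Zarhin/Mori is logically viable but, as written, has a gap precisely at the step you flag: Tate's conjecture for homomorphisms plus semisimplicity produce an abelian subvariety $A\subseteq J_X$ with $V_\ell A=(V_\ell J_X)^{G_{K\kbar}}$, i.e., with trivial geometric monodromy, but the further assertion that $A\times_KK\kbar$ is isogenous to a constant abelian variety is \emph{not} a formal consequence of those theorems. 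It is true, yet proving it requires its own argument (good reduction everywhere by N\'eron--Ogg--Shafarevich, comparison of Frobenius eigenvalues at all closed points, Tate's isogeny theorem over the finite residue fields, and then Zarhin again over $K$ to produce a nonzero homomorphism from a constant abelian variety), and that is essentially the content of the statement being proved. I recommend replacing this step with the Lang--N\'eron argument, which also keeps the lemma independent of the deep isogeny theorems.
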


\begin{proof}
  The first equivalence is immediate from the cohomological
  formula~\eqref{eq:L-cohom}.  For the second equivalence, we use the
  Lang-N\'eron theorem which says that if $B$ is zero, then $J_X(\kbar
  K)$ is finitely generated.  In this case, its torsion subgroup is
  finite and so
$$H^0(\CCbar,\FF)=\varprojlim_n \left(J_X[\ell^n](\kbar K)\right)=0.$$
Conversely, if $B\neq0$, then $V_\ell B\subset V_\ell J_X$ and we see
that $H^0(\CCbar,\FF)$ contains a subspace of dimension $2\dim B>0$.
Thus $B=0$ is equivalent to $H^0(\CCbar,\FF)=0$.  That these
statements are equivalent to $H^2(\CCbar,\FF)=0$ follows from
duality (using autoduality up to a Tate twist of $\FF$).
\end{proof}

Whether or not $L(J_X,s)$ is a polynomial, its zeroes lie on the line
$\RP s=1$, and the order of zero at $s=1$ is equal to the multiplicity
of $q$ as an eigenvalue of $\Fr_q$ on $H^1(\CCbar,\FF)$.

\subsection{The basic $BSD$ conjecture}
\begin{conj}[$BSD$] We have
$$\rk J_X(K)=\ord_{s=1} L(J_X,s).$$
\end{conj}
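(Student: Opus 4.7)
The plan is to reduce the $BSD$ conjecture for $J_X$ to Tate's second conjecture $T_2(\XX)$ for the associated minimal regular model $\pi: \XX \to \CC$, by matching the rank and the order of vanishing separately. For the ranks, since $k=\Fq$ is finite, $B(k)$ is finite, so $\rk J_X(K) = \rk \MW(J_X)$, and Corollary~\ref{cor:STcor} gives
$$\rk \NS(\XX) \;=\; \rk J_X(K) \,+\, 2 \,+\, \sum_v (f_v - 1). \qquad (\star)$$

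For the analytic side, I would use the degeneration of the Leray spectral sequence for $\pi$ established in Section~\ref{s:Leray}. It yields a factorization of the degree-2 factor of the zeta function:
$$P_2(\XX, T) \;=\; \det(1 - T\Fr_q \mid H^2(\CCbar, \Ql)) \cdot \det(1 - T\Fr_q \mid H^1(\CCbar, R^1\pi_*\Ql)) \cdot \det(1 - T\Fr_q \mid H^0(\CCbar, R^2\pi_*\Ql)).$$
Proposition~\ref{prop:middle-extension} identifies $R^1\pi_*\Ql$ with the middle-extension sheaf $\FF$ used in the cohomological description of $L(J_X, s)$, so the middle factor agrees with $Q_1(J_X, T)$ and contributes exactly $\ord_{s=1} L(J_X, s)$ to the pole order of $\zeta(\XX, s)$ at $s = 1$. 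The outer factor $H^2(\CCbar, \Ql) \cong \Ql(-1)$ contributes a simple pole, while the analysis of $H^0(\CCbar, R^2\pi_*\Ql)^{G_k}$ recalled in Subsection~\ref{s:Sel-H1R1} shows this Galois-invariant subspace has rank $1 + \sum_v(f_v - 1)$ with $\Fr_q$ acting as multiplication by $q$, contributing a pole of that order. By Deligne's weight theorem the remaining $P_i(\XX, T)$ with $i \neq 2$ have no zero or pole at $s = 1$. Collecting these contributions:
$$-\ord_{s=1} \zeta(\XX, s) \;=\; \ord_{s=1} L(J_X, s) \,+\, 2 \,+\, \sum_v (f_v - 1). \qquad (\star\star)$$

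Granting $T_2(\XX)$, which asserts $\rk \NS(\XX) = -\ord_{s=1} \zeta(\XX, s)$, the left-hand sides of $(\star)$ and $(\star\star)$ coincide, and cancelling the common trivial contribution $2 + \sum_v(f_v - 1)$ yields $\rk J_X(K) = \ord_{s=1} L(J_X, s)$, which is $BSD$. The main obstacle is that $T_2(\XX)$ is itself open in general: the reduction above is reversible, so $BSD$ for $J_X$ is in fact \emph{equivalent} to $T_2(\XX)$ for its minimal regular model, and any unconditional proof at this level of generality would simultaneously establish Tate's conjecture for a vast class of surfaces. Consequently the plan succeeds unconditionally only when $\XX$ falls within one of the classes in which $T_1(\XX, \ell)$ is known---products of curves, dominated-by-product surfaces, certain K3's, and the like reviewed in Chapter~\ref{ch:cases}---in which case a further application of Theorem~\ref{thm:T2=>AT} upgrades the rank equality to the refined leading-coefficient formula of $AT$-type.
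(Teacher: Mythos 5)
You have correctly recognized that the statement is a conjecture with no unconditional proof, and your reduction --- matching $\rk\NS(\XX)$ against $\rk J_X(K)$ via Shioda--Tate and matching $-\ord_{s=1}\zeta(\XX,s)$ against $\ord_{s=1}L(J_X,s)$ via the degenerate Leray spectral sequence, so that $BSD$ becomes equivalent to $T_2(\XX)$ --- is precisely the argument the paper gives in the proposition immediately following the conjecture. This is essentially the same approach as the paper's, including the observation that one gets unconditional results exactly when $T_1(\XX,\ell)$ is known (e.g.\ for surfaces dominated by products of curves).
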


Parallel to the Tate conjecture case, we have an inequality 
and implications among the conjectures.

\begin{prop} We have
\begin{enumerate}
\item $\rk J_X(K)\le\ord_{s=1} L(J_X,s)$
\item $BSD$ \textup{(}i.e., equality in (1)\textup{)} implies the
  Tate-Shafarevich conjecture $TS(J_X,\ell)$ \textup{(}and thus also
  the Selmer group conjecture $S(J_X,\ell)$\textup{)} for all $\ell$
\item $TS(J_X,\ell)$ for any one $\ell$ implies $BSD$.
\end{enumerate}
\end{prop}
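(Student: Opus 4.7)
The plan is to run the analog of Theorem~\ref{thm:T1<>T2} for the Jacobian of $X$, substituting the descent and Selmer/$\sha$ machinery for the cycle class / N\'eron--Severi / Brauer machinery, and using Proposition~\ref{prop:Sel-H1R1} in place of the direct comparison between cycle classes and cohomology. The three ingredients driving everything will be the cohomological formula~\eqref{eq:L-cohom} together with Deligne's weight bound, the descent exact sequence~\eqref{eq:descent}, and the commensurability of $\Sel(J_X,\Zl)$ with $H^1(\CCbar, R^1\pi_*\Zl(1))^{G_k}$ afforded by Proposition~\ref{prop:Sel-H1R1}.

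For (1), I first observe that since the eigenvalues of $\Fr_q$ on $H^i(\CCbar,\FF)$ are Weil numbers of weight $i+1$, the factors $Q_0$ and $Q_2$ in~\eqref{eq:L-cohom} contribute no zero or pole at $s=1$, so $\ord_{s=1}L(J_X,s)$ equals the algebraic multiplicity of $q$ as an eigenvalue of $\Fr_q$ on $H^1(\CCbar,\FF)$, and in particular dominates the dimension of the proper eigenspace $H^1(\CCbar,\FF)^{\Fr_q=q}$. After a Tate twist, this proper eigenspace equals $H^1(\CCbar, R^1\pi_*\Ql(1))^{G_k}$, whose $\Ql$-dimension is the $\Zl$-rank of $H^1(\CCbar, R^1\pi_*\Zl(1))^{G_k}$; by Proposition~\ref{prop:Sel-H1R1} this matches $\rk_{\Zl}\Sel(J_X,\Zl)$, and~\eqref{eq:descent} embeds $J_X(K)\otimes\Zl$ into $\Sel(J_X,\Zl)$. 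Chaining these inequalities proves (1).

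Part (2) follows by reading the chain
$$\rk J_X(K)\;\le\;\rk_{\Zl}\Sel(J_X,\Zl)\;=\;\dim_{\Ql}H^1(\CCbar,\FF)^{\Fr_q=q}\;\le\;\ord_{s=1}L(J_X,s)$$
backwards under the assumption of $BSD$: equality throughout forces $\rk_{\Zl}T_\ell\sha(J_X)=0$, i.e., the maximal divisible subgroup of $\sha(J_X)[\ell^\infty]$ vanishes. Since $\sha(J_X)[\ell]$ sits inside the finite group $\Sel(J_X,\ell)$, a torsion $\ell$-primary abelian group with finite socle and no divisible part is necessarily finite, so $\sha(J_X)[\ell^\infty]$ is finite; this is $TS(J_X,\ell)$, and the Selmer group conjecture follows from Proposition~\ref{prop:Sel<=>TS}.

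For (3), assuming $TS(J_X,\ell)$, the same chain collapses its first inequality to an equality, yielding $\rk J_X(K)=\dim_{\Ql}H^1(\CCbar,\FF)^{\Fr_q=q}$. The remaining gap, and the crux of the proposition, is to replace the proper $\Fr_q=q$ eigenspace by the generalized one; equivalently, one must establish that $\Fr_q$ acts semisimply on the $q$-generalized eigenspace of $H^1(\CCbar,\FF)$. I expect this semisimplicity to be the main obstacle. The argument should run parallel to Milne's in Theorem~\ref{thm:T1<>T2}(3): one builds a diagram with a cycle-class-like map $h$ (here the composition $J_X(K)\otimes\Zl\to\Sel(J_X,\Zl)\to H^1(\CCbar,R^1\pi_*\Zl(1))^{G_k}$ produced by descent and Proposition~\ref{prop:Sel-H1R1}), a transpose map $g^*$ whose quasi-isomorphism at $\ell$ encodes the vanishing of the divisible part of $\sha(J_X)[\ell^\infty]$ via a Cassels--Tate style duality, and the central map $f$ from $\Fr_q$-invariants to $\Fr_q$-coinvariants on $H^1(\CCbar,\FF)$ whose quasi-isomorphism is equivalent to the required semisimplicity. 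Under $TS(J_X,\ell)$ both $h$ and $g^*$ become quasi-isomorphisms at $\ell$, forcing $f$ to be one as well, which delivers the semisimplicity and hence $BSD$.
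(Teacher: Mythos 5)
Your parts (1) and (2) are essentially sound for $\ell\neq p$, and they run close in spirit to the paper's computation (the paper packages the same count via the Shioda--Tate theorem and the zeta function of $\XX$, but the ingredients --- Leray, Deligne's weights, Proposition~\ref{prop:Sel-H1R1}, and the descent sequence~\eqref{eq:descent} --- are the same). There are, however, two genuine gaps. The main one is in part (3): you correctly isolate the crux, namely semisimplicity of $\Fr_q$ on the generalized $q$-eigenspace of $H^1(\CCbar,\FF)$, but you do not prove it; you only assert that a diagram ``parallel to Milne's'' with maps $h$, $f$, $g^*$ should exist. Constructing that diagram on the curve side --- identifying the analogue of the intersection form (the height pairing), of the Kummer map $g$, and of the duality pairing, and verifying the needed quasi-isomorphisms --- is essentially the content of Schneider's and Kato--Trihan's theorems, not a routine transcription. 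The paper avoids this entirely by \emph{transferring the problem to the surface}: $TS(J_X,\ell)$ gives finiteness of $\sha(J_X)[\ell^\infty]$, hence of $\Br(\XX)[\ell^\infty]$ (Grothendieck's comparison of $\Br(\XX)$ and $\sha(J_X)$ up to finite groups), hence $T_1(\XX,\ell)$ by Proposition~\ref{prop:T1<=>Br}, hence $T_2(\XX)$ by the already-established Theorem~\ref{thm:T1<>T2}(3); the numerical identity coming from Shioda--Tate and the Leray spectral sequence then converts $T_2(\XX)$ back into $BSD$. The missing idea in your proposal is precisely this use of the $\Br(\XX)\leftrightarrow\sha(J_X)$ dictionary, which lets one borrow the hard semisimplicity statement from the surface instead of reproving it for $H^1(\CCbar,\FF)$.

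The second gap is the case $\ell=p$. Your entire chain lives in $\ell$-adic \'etale cohomology with $\ell\neq p$, so as written part (2) only yields $TS(J_X,\ell)$ for $\ell\neq p$, whereas the statement demands all $\ell$; and part (3) says nothing if the one prime for which $TS$ is assumed happens to be $p$. The paper's detour through the surface also repairs this: Theorem~\ref{thm:T1<>T2} and Proposition~\ref{prop:T1<=>Br} are available for all $\ell$ including $\ell=p$ (via crystalline and flat cohomology), so once one reaches $T_2(\XX)$ one obtains finiteness of $\Br(\XX)[\ell^\infty]$, and hence of $\sha(J_X)[\ell^\infty]$, for every prime $\ell$.
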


\begin{proof}
The discussion in Section~\ref{s:Sel-H1R1} shows that
\begin{align*}
&-\ord_{s=1}\zeta(\XX,s)-\ord_{s=1}L(J_X,s)\\
=&\dim H^2(\XXbar,\Ql(1))^{G_k}-\dim H^1(\CCbar,\R^1\pi_*\Ql(1))^{G_k}\\
=&2+\sum_{v}(f_v-1).
\end{align*}
By the Shioda-Tate theorem,
$$\rk\NS(\XX)-\rk J_X(K)=2+\sum_{v}(f_v-1).$$
Thus
$$\rk J_X(K)-\ord_{s=1}L(J_X,s)=\rk\NS(\XX)+\ord_{s=1}\zeta(\XX,s)$$
and so $\rk J_X(K)\le \ord_{s=1}L(J_X(K),s)$ with equality if and only
if $T_2(\XX)$ holds.  But $T_2(\XX)$ implies the finiteness of the
$\ell$ primary part of $\Br(\XX)$ for all $\ell$ and thus finiteness
of the $\ell$ primary part of $\sha(J_X)$ for all $\ell$.  By
Proposition~\ref{prop:Sel<=>TS} his proves that $BSD$ implies the
Selmer group conjecture $S(J_X,\ell)$ for all $\ell$

Conversely, if $S(J_X,\ell)$ holds for some $\ell$, then the
$\ell$-primary part of $\sha(J_X)$ is finite and so is the $\ell$
primary part of $\Br(\XX)$.  By Proposition~\ref{prop:T1<=>Br} this
implies $T_1(\XX,\ell)$ and therefore $T_2(\XX)$. By the previous
paragraph, this implies $BSD$ as well.
\end{proof}

\subsection{The refined $BSD$ conjecture}
We need two more invariants.

Recall that we have the canonical height pairing
\begin{align*}
J_X(K)\times J_X(K)&\to\R\\
(P,Q)&\mapsto\langle P,Q\rangle
\end{align*}
which is bilinear and symmetric, and which takes values in $\Q\cdot\log
q$.  It makes $J_X(K)/tor$ into a Euclidean lattice.  We define the
regulator of $J_X$ to be the discriminant of the
height pairing:
$$R=R(J_X)=\left|\det\left(\langle P_i,P_j\rangle\right)_{i,j=1,\dots,r}\right|$$
where $P_1,\dots,P_r$ is a basis of $J_X(K)/tor$.
The regulator is $(\log q)^r$ times  a rational number (with an {\it a
  priori\/} bounded denominator). 

We also define the Tamagawa number of $J_X$ as follows: Choose an
invariant differential $\omega$ on $J_X$ over $K$.  At each place $v$,
let $a_v$ be the integer such that $\pi_v^{a_v}\omega$ is a N\'eron
differential at $v$. (Here $\pi_v$ is a generator of the maximal ideal
$\m_v$ at $v$.)  Also, let $c_v$ be the number of connected components
in the special fiber of the N\'eron model at $v$.  Then we set
$$\tau=\tau(J_X)=q^{g_X(1-g_\CC)}\prod_vq^{g_Xa_v}c_v.$$
(See \cite{Tate66b} for a less {\it ad hoc\/} version of this definition.)

The refined $BSD$ conjecture ($rBSD$ for short) relates the leading
coefficient of the $L$-function at $s=1$ to the other invariants
attached to $J_X$.

\begin{conj}[$rBSD$] $BSD$ holds, $\sha(J_X)$ is finite, and we have
  the asymptotic
$$L(J_X,s)\sim\frac{R|\sha(J_X)|\tau}{|J_X(K)_{tor}|^2}(s-1)^r$$
as $s\to1$.  
\end{conj}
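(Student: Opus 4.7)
The plan is to deduce $rBSD$ from the Artin--Tate conjecture $AT(\XX)$ applied to the minimal regular model $\pi:\XX\to\CC$, using the web of correspondences built up in the preceding chapters as a translation mechanism between invariants of $\XX$ and invariants of $J_X$. I would first assume $X$ has a $K$-rational point (so $\pi$ has a section, all $m_v=1$, and Grothendieck's theorem yields a genuine isomorphism $\Br(\XX)\cong\sha(J_X)$), and reduce the general case afterward by controlling the index/period discrepancy.

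The first step is the factorization coming from the Leray spectral sequence for $\pi$. The degeneration proved in Section~\ref{s:Leray}, combined with Proposition~\ref{prop:middle-extension} identifying $R^1\pi_*\Ql$ with $j_*j^*R^1\pi_*\Ql$, lets me write
\[
\zeta(\XX,s) \;=\; \frac{\zeta(\CC,s)\,\zeta(\CC,s-1)\,E(s)}{L(J_X,s)},
\]
where $E(s)$ is an explicit product over the bad places $v$ of characteristic polynomials of $\Fr_v$ on the span of components of $\XX_v$. Comparing orders of vanishing at $s=1$ and invoking the Shioda--Tate Corollary~\ref{cor:STcor} gives the identity $\rk\NS(\XX)-(-\ord_{s=1}\zeta(\XX,s))=\rk J_X(K)-\ord_{s=1}L(J_X,s)$, so $AT(\XX)$ immediately forces the rank equality of $BSD$, and the finiteness of $\sha(J_X)$ follows from the finiteness of $\Br(\XX)$.

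The heart of the argument is the leading-coefficient comparison, which I would carry out factor by factor: (i) $|\Br(\XX)|=|\sha(J_X)|$ via Grothendieck; (ii) the torsion ratio $|\NS(\XX)_{tor}|^2 / |J_X(K)_{tor}|^2$ is read off the Shioda--Tate short exact sequence together with the explicit description of $B(k)$ from Proposition~\ref{prop:explicitB}; (iii) the regulator ratio $R(\XX)/R(J_X)$ reduces to the discriminant of the ``trivial'' sublattice of $\NS(\XX)$ spanned by a section, a fiber, and the fiber components, using that the Shioda--Tate height on $L^1\NS(\XX)/L^2\NS(\XX)$ equals $\log q$ times the N\'eron--Tate height on $\MW(J_X)$; and (iv) matching the Artin--Tate factor $q^{-\alpha(\XX)}$ with $\tau(J_X)$ times the contributions of $\zeta(\CC,s)\zeta(\CC,s-1)$ at $s=1$, using the Noether formula $12\chi(\XX,\OO_\XX)=c_1(\XX)^2+c_2(\XX)$, the identification $\dim\Picvar_{\XX/k}=g_\CC+\dim B$ from Proposition~\ref{prop:explicitB}, and Raynaud's identification of $\JJ_X^0$ with the smooth part of $\Picf^0_{\XX/\CC}$, which converts N\'eron differentials into the relative dualizing sheaf of $\pi$ and component groups $c_v$ into combinatorial invariants of $\XX_v$.

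The hard part will be step (iv), the local matching at each bad place: expressing $q^{g_X a_v}c_v$ in terms of the Euler-characteristic contribution of $\XX_v$ requires a careful use of the Artin conductor, an Ogg-type formula for higher-genus fibers (in the form due to Saito), and a fine analysis of the N\'eron model of $J_X$ via Bosch--L\"utkebohmert--Raynaud. Once these local matchings are in place, the product formula $z(e)=z(h)z(f)z(j)z(g^*)$ for quasi-isomorphisms, already invoked in the Artin--Tate--Milne framework summarized after Theorem~\ref{thm:T2=>AT}, transports the asymptotic of $\zeta(\XX,s)$ to the asymptotic of $L(J_X,s)$. To drop the hypothesis that $X$ has a $K$-rational point I would work with the modified Grothendieck map $\Br(\XX)\to\sha(J_X)$ (finite kernel and cokernel of orders expressible via $\delta$ and $\delta'$) and verify that the extra factors cancel against the index-related corrections appearing in the regulator and torsion terms, or alternatively extend $k$ to trivialize $\delta$ and descend the resulting asymptotic using the good base-change behavior of all the invariants involved.
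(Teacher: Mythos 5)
The statement you are trying to prove is a \emph{conjecture}, not a theorem: the paper states $rBSD$ as Conjecture and offers no proof of it, nor could it --- unconditionally it is open. What your outline actually sketches is the implication $AT(\XX)\Rightarrow rBSD(J_X)$, which, via the equivalences $T_2(\XX)\Leftrightarrow BSD(J_X)$ and $T_2(\XX)\Rightarrow AT(\XX)$ established earlier in the paper, is the content of the Kato--Trihan theorem stated immediately after the conjecture. The paper explicitly describes that theorem as ``well beyond what can be covered in these lectures,'' so even as a conditional argument your proposal is being measured against a result the paper only cites. You should present your argument as a proof of the implication, with $AT(\XX)$ (equivalently, finiteness of $\Br(\XX)[\ell^\infty]$ for one $\ell$) as an explicit hypothesis, not as a proof of the conjecture itself.

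Even read that way, there is a genuine gap at the $p$-part of your step (iv). The prime-to-$p$ leading-coefficient comparison between $AT(\XX)$ and $rBSD(J_X)$ is essentially Gordon's thesis and is carried by the tools you list (Leray degeneration, Shioda--Tate, Grothendieck's $\Br\cong\sha$, local analysis of N\'eron models and fiber components). But none of those tools touches the $p$-primary parts of $\Br(\XX)$ and $\sha(J_X)$, the $p$-adic valuation of the Tamagawa factor $q^{-\alpha(\XX)}$ versus $\tau(J_X)$, or the $p$-torsion in the various cohomology groups; this is exactly where Bauer could only handle everywhere good reduction and where Kato--Trihan needed log-syntomic cohomology and a descent through a semistable extension. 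Your product formula $z(e)=z(h)z(f)z(j)z(g^*)$ only yields the full order of $\Br(\XX)$ after the ``delicate work in $p$-adic cohomology'' the paper alludes to, and your outline supplies no substitute for it. A second, smaller but real, gap is the reduction to the case of a $K$-rational point: by the Liu--Lorenzini--Raynaud formula $|\sha(J_X)|\prod_v\delta_v\delta'_v=|\Br(\XX)|\delta^2$, the groups $\Br(\XX)$ and $\sha(J_X)$ genuinely differ in general, and ``extending $k$'' does not trivialize the index $\delta$ (which is a phenomenon over $K=k(\CC)$, not over $k$), so the promised cancellation of index/period corrections against regulator and torsion terms must actually be carried out rather than asserted.
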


As with the Tate and Artin-Tate conjectures, the basic conjecture
implies the refined conjecture.

\begin{thm}[Kato-Trihan]
  If $J_X$ satisfies the $BSD$ conjecture, then it also satisfies the
  $rBSD$ conjecture.
\end{thm}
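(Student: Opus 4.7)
The plan is to reduce $rBSD(J_X)$ to the refined Artin--Tate conjecture $AT(\XX)$ via the detailed Shioda--Tate dictionary, thereby inheriting the leading-coefficient formula from the surface. By the preceding proposition, $BSD(J_X)$ is equivalent to $T_2(\XX)$; more precisely, the identity
\begin{equation*}
\rk J_X(K) - \ord_{s=1} L(J_X,s) = \rk \NS(\XX) + \ord_{s=1}\zeta(\XX,s)
\end{equation*}
derived from Shioda--Tate and the Leray spectral sequence forces the order-equality for $J_X$ at $s=1$ to coincide with the order-equality for $\XX$. Hence $BSD(J_X)$ implies $T_2(\XX)$, and Theorem~\ref{thm:T2=>AT} (with the Bloch/Illusie/Milne extension in characteristic 2) gives $AT(\XX)$ in its full strength, including finiteness of $\Br(\XX)$.

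Next I would translate each factor appearing in $AT(\XX)$ into the corresponding factor of $rBSD(J_X)$. Grothendieck's theorem $\Br(\XX)\cong\sha(J_X)$ (possibly after passing to a $k$-extension providing a section, and tracking the resulting finite kernels/cokernels) converts $|\Br(\XX)|$ into $|\sha(J_X)|$, giving in particular finiteness of $\sha(J_X)$. The Shioda--Tate proposition (Proposition~\ref{prop:S-T}) and the identification of the Shioda--Tate bilinear form with $(\log q)^{-1}$ times the N\'eron--Tate height (stated earlier) convert the regulator $R(\XX)$, computed on $\NS(\XX)/\mathrm{tor}$, into $R(J_X)$ multiplied by the discriminant contribution of the rank-$(2+\sum_v(f_v-1))$ ``trivial'' sublattice generated by a section, a fiber, and the non-identity components. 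This latter discriminant, together with $|\NS(\XX)_{\mathrm{tor}}|$, matches $|J_X(K)_{\mathrm{tor}}|$ and $\prod_v c_v$ on the nose: the key local computation is that the intersection matrix of the components of $\XX_v$ (modulo a fiber) has determinant $\pm c_v m_v$, which under our assumptions reduces to $c_v$. Finally the factorization
\begin{equation*}
\zeta(\XX,s) \;=\; \frac{\zeta(\CC,s)\,\zeta(\CC,s-1)}{L(J_X,s)}\cdot (\text{local correction at reducible fibers})
\end{equation*}
coming from \eqref{eq:L-cohom} together with the Leray decomposition of $H^2(\XXbar,\Ql)$ lets me extract the leading term of $L(J_X,s)$ at $s=1$ from that of $\zeta(\XX,s)$, with the Tamagawa factor $\tau(J_X)=q^{g_X(1-g_\CC)}\prod_v q^{g_X a_v}c_v$ matching $q^{-\alpha(\XX)}$ up to the terms absorbed by $\zeta(\CC,s)\zeta(\CC,s-1)$ and the discriminants above; this is essentially Gordon's/Milne's bookkeeping.

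The main obstacle is the $p$-part of this matching, where $p=\ch(k)$. For $\ell\neq p$ the comparison is controlled by \'etale cohomology and the argument is essentially Milne's. For $\ell=p$, however, one has to replace $\ell$-adic sheaves by flat cohomology of $\mu_{p^n}$, relate the $p$-part of $\sha(J_X)$ to that of $\Br(\XX)$ (where the finiteness of $\sha[p^\infty]$ is far more delicate than for $\ell\neq p$), and identify the $p$-adic Tamagawa contributions using de\thinspace Rham--Witt cohomology of $W\Omega^\bullet_\XX$ and the logarithmic subsheaves $W\Omega^i_{\log}$. The technical heart of Kato--Trihan is the construction and analysis of a two-variable $p$-adic cohomology theory (syntomic/log-crystalline) on $\XX\to\CC$ that simultaneously computes the $p$-part of $\Br(\XX)$, the local differential factors $q^{g_X a_v}$, and the leading coefficient of $L(J_X,s)$ at $s=1$ via a slope spectral sequence; granting this machinery, the product formula $z(e)=z(h)z(f)z(j)z(g^*)$ from the diagram preceding Theorem~\ref{thm:T2=>AT} extends to all $\ell$ including $p$, and the refined $BSD$ formula drops out.
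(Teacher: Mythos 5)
The paper does not actually prove this theorem; it cites Kato and Trihan \cite{KatoTrihan03} and sketches their strategy, which works directly on the abelian variety: pass to a finite extension of $K$ over which $A$ acquires semistable reduction, compare log-syntomic/log-crystalline cohomology with flat cohomology there, and descend to $K$ by a Galois argument. Your proposal takes a genuinely different route, namely the classical Gordon--Milne approach through the surface: deduce $AT(\XX)$ from $T_2(\XX)$ (correctly identified as equivalent to $BSD(J_X)$), and then translate the leading-term formula of $AT(\XX)$ into that of $rBSD(J_X)$ term by term via Shioda--Tate, Grothendieck's $\Br(\XX)\cong\sha(J_X)$, the height--intersection-pairing comparison, and local determinant computations for the Tamagawa factors.

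The gap is the step ``possibly after passing to a $k$-extension providing a section.'' A finite extension of the constant field $k$ does \emph{not} in general make the index $\delta$ of $X$ equal to $1$ --- Tsen's theorem handles genus $0$, not $g_X\ge1$ --- so one cannot arrange a section of $\pi$ by enlarging $k$, and the theorem is stated with no rational-point hypothesis (the paper explicitly does not assume $X$ has a $K$-point). Without a section, Grothendieck's isomorphism fails, and $|\Br(\XX)|$ genuinely differs from $|\sha(J_X)|$: the Liu--Lorenzini--Raynaud formula quoted in the paper reads
$$|\sha(J_X)|\prod_v\delta_v\delta'_v=|\Br(\XX)|\,\delta^2,$$
so the ``track the finite kernels/cokernels'' step conceals a numerical correction that changes the refined formula, not merely a finiteness statement. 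Your local determinant identification $\pm c_v m_v$ also silently assumes $m_v=1$, i.e., again a section. Likewise cohomological flatness may fail, and the corrections at $\ell=p$ are exactly where the comparison becomes hard. Carrying out this bookkeeping in the generality stated --- arbitrary index, arbitrary reduction, $p$-part included --- is precisely the content the paper attributes to Kato--Trihan and their predecessors (Schneider, Bauer) rather than proving. Your sketch is a reasonable account of the section/cohomologically-flat case, but it does not establish the theorem as stated, and the proposed fix does not close the gap.
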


(Kato and Trihan \cite{KatoTrihan03} actually treat the general case
of abelian varieties, not just Jacobians.)

This is a difficult theorem which completes a long line of research by
many authors, including Artin, Tate, Milne, Gordon, Schneider, and
Bauer.  Its proof is well beyond what can be covered in these
lectures, but we sketch a few of the main ideas.

In \cite{Schneider82}, Schneider generalized the results of Artin-Tate
to abelian varieties: $|\sha(A)[\ell^\infty]|<\infty$ for any one
$\ell\neq p\implies$ the prime-to-$p$ part of $rBSD$.  In
\cite{Bauer92}, Bauer was able to handle the $p$-part for abelian
varieties with everywhere good reduction.  In all of the above, the
main thing is to compare a ``geometric cohomology'' (such as
$H^i(\XXbar,\Zl)$ or crystalline cohomology) that computes the
$L$-function to an ``arithmetic cohomology'' (such as the flat
cohomology of $\mu_n$) that relates to $A(K)$ and $\sha(A)$.
Ultimately, on the geometric side one needs an integral theory which
is supple enough to handle degenerating coefficients.  Log crystalline
cohomology does the job in the context of semistable reduction.  The
Kato-Trihan paper \cite{KatoTrihan03} handles the general case by an
elaborate argument involving passing to an extension of $K$ where $A$
achieves semistable reduction, using log-syntomic cohomology upstairs
to make a comparison with flat cohomology, and showing that the
results can be brought back down to $K$ by a Galois argument.  It is a
{\it tour de force\/} of $p$-adic cohomology theory.

\section{Summary of implications}\label{s:analytic-comparison}

The results of this and the preceding two chapters show that many
statements related to the BSD and Tate conjecture are equivalent.  In
fact, there are many redundancies, so the reader is invited to choose
his or her favorite way to organize the implications.  The net result
is the following.

\begin{thm}\label{thm:main-comparison}
Let $\XX$ be a smooth, proper, geometrically irreducible surface over
a finite field $k$, equipped with a generically smooth morphism
$\XX\to\CC$ to a smooth, proper, geometrically irreducible curve
$\CC$.  Let $K$ be the function field $k(\CC)$, let $X\to\spec K$ be
the generic fiber of $\pi$, and let $J_X$ be the Jacobian of $X$.  We
have
\begin{equation}\label{eq:ineq}
\ord_{s=1}L(J_X,s)-\rk J_X(K)
=-\ord_{s=1}\zeta(\XX,s)-\rk\NS(\XX)
\ge0
\end{equation}
and the following are equivalent:
\begin{itemize}
\item Equality holds in \eqref{eq:ineq}.
\item $\sha(J_X)[\ell^\infty]$ is finite for any one prime number $\ell$.
\item $\Br(\XX)[\ell^\infty]$ is finite for any one prime number $\ell$.
\item $\sha(J_X)$ is finite.
\item $\Br(\XX)$ is finite. 
\end{itemize}
If these conditions hold, then the refined $BSD$ conjecture
$rBSD(J_X)$ and the Artin-Tate conjecture $AT(\XX)$ hold as well.
\end{thm}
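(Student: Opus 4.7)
My plan is to (i) establish the displayed equality in \eqref{eq:ineq} by combining Shioda--Tate with the Leray decomposition of the zeta function; (ii) get the inequality $\ge 0$ for free from the Tate bound on $\NS(\XX)$; (iii) arrange the five equivalent conditions into a ring of implications all reducing to $T_1(\XX,\ell)$ for one $\ell$; and (iv) upgrade to $AT(\XX)$ and $rBSD(J_X)$ by quoting Theorem~\ref{thm:T2=>AT} and Kato--Trihan.

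For (i), Corollary~\ref{cor:STcor} gives $\rk\NS(\XX)-\rk J_X(K)=2+\sum_v(f_v-1)$. On the analytic side, the Leray spectral sequence degenerates (Section~\ref{s:Leray}), so $-\ord_{s=1}\zeta(\XX,s)=\dim H^2(\XXbar,\Ql(1))^{G_k}$ splits as the sum of the $\Fr_q=q$ eigenspaces in $H^i(\CCbar,R^{2-i}\pi_*\Ql(1))$ for $i=0,1,2$. Using $R^0\pi_*\Ql(1)=\Ql(1)$ the $i=0$ piece contributes $1$; by the stalk analysis in Section~\ref{s:Sel-H1R1} the $i=2$ piece contributes $1+\sum_v(f_v-1)$; and by Proposition~\ref{prop:middle-extension} together with the cohomological formula for $L(J_X,s)$ the $i=1$ piece is exactly $\ord_{s=1}L(J_X,s)$. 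Subtracting yields the equality. The inequality $\ge 0$ is then the Tate bound $\rk\NS(\XX)\le-\ord_{s=1}\zeta(\XX,s)$ of Theorem~\ref{thm:T1<>T2}(1).

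For (iii), I chain: equality in \eqref{eq:ineq} $\iff T_2(\XX) \iff T_1(\XX,\ell)$ for one $\ell$ (Theorem~\ref{thm:T1<>T2}) $\iff \Br(\XX)[\ell^\infty]$ finite (Proposition~\ref{prop:T1<=>Br}), and $T_2(\XX)\Rightarrow AT(\XX) \Rightarrow \Br(\XX)$ finite (Theorem~\ref{thm:T2=>AT}). To fold in $\sha(J_X)$, I use Proposition~\ref{prop:Sel<=>TS} to convert finiteness of $\sha(J_X)[\ell^\infty]$ into the Selmer conjecture $S(J_X,\ell)$, and Proposition~\ref{prop:Sel-H1R1} together with Corollary~\ref{cor:R1H1-cohom} to compare the Selmer group with $H^1(\CCbar,R^1\pi_*\Zl(1))^{G_k}$ modulo finite groups. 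The latter group, together with the $H^0$ and $H^2$ contributions already accounted for, represents the cokernel of the injection $\NS(\XX)\tensor\Zl\hookrightarrow H^2(\XXbar,\Zl(1))^{G_k}$ of \eqref{eq:cycleclass} modulo a finite discrepancy; its torsion-freeness-mod-finiteness is therefore precisely $T_1(\XX,\ell)$. This closes the ring. Part (iv) is then immediate: Theorem~\ref{thm:T2=>AT} gives $AT(\XX)$ and Kato--Trihan promotes $BSD(J_X)$ to $rBSD(J_X)$.

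The hard part is the integral-level comparison of $\Br(\XX)$ with $\sha(J_X)$, and of $\Sel(J_X,\Zl)$ with $H^1(\CCbar,R^1\pi_*\Zl(1))^{G_k}$, \emph{without} the hypothesis that $X$ has a $K$-rational point---that hypothesis was in force throughout the ``comparison of cohomology groups'' section of the excerpt because it ensures $m_v=1$, cohomological flatness of $\pi$, and triviality of the skyscraper component $\KK$ in $R^1\pi_*\G_m$. My approach would be to pass to a finite extension $K'/K$ of degree dividing $\delta$ (the index of $X$) over which $X$ acquires a rational point, apply the comparison results there, and descend via $G_K/G_{K'}$. All of the discrepancies introduced are $\delta$-torsion and involve only finite kernels and cokernels, so after $\tensor\Ql$ they disappear; tracking them integrally, and handling $\ell=p$ uniformly via de\thinspace Rham--Witt cohomology as in \cite{Milne86a}, is where the main technical labor lies.
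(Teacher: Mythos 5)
Your proposal follows essentially the same route as the paper's: the rank identity from Shioda--Tate (Corollary~\ref{cor:STcor}) combined with the degenerate Leray spectral sequence, the inequality from Theorem~\ref{thm:T1<>T2}(1), the ring of equivalences by chaining Proposition~\ref{prop:T1<=>Br}, Proposition~\ref{prop:Sel<=>TS} and Proposition~\ref{prop:Sel-H1R1}, and the refined statements from Theorem~\ref{thm:T2=>AT} and Kato--Trihan. The step you rightly single out as the technical crux --- carrying out the $\Br(\XX)$/$\sha(J_X)$ and Selmer-group comparisons integrally without assuming $X(K)\neq\emptyset$ --- is precisely the step the paper itself does not prove in the text but defers to the analytic comparison and to the literature (Kato--Trihan, Liu--Lorenzini--Raynaud), so your descent-to-$K'$ sketch sits at the same level of completeness as the source.
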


%[???add invariance under isogeny and restriction of scalars]

To end the chapter, we quote one more precise result
\cite{LiuLorenziniRaynaud05} on the connection between $\sha(J_X)$ and
$\Br(\XX)$:

\begin{prop}[Liu-Lorenzini-Raynaud]
  Assume that the equivalent conditions of
  Theorem~\ref{thm:main-comparison} hold.  Then the order of $\Br(\XX)$ is
  a square and we have
$$|\sha(J_X)|\prod_v\delta_v\delta'_v=|\Br(\XX)|\delta^2.$$
Here $\delta$ is the index of $X$, and $\delta_v$ and $\delta'_v$ are
the index and period of $X\times_KK_v$.
\end{prop}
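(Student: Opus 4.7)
The plan is to revisit Grothendieck's comparison from the earlier subsection (which gave $\Br(\XX)\cong\sha(J_X)$ under the assumption that $X$ has a $K$-rational point) and carefully measure the defect when no such point exists. The defect will break into a local contribution at each place $v$ (the factor $\delta_v\delta'_v$) and a global contribution (the factor $\delta^2$).

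The first step is to set up Grothendieck's machinery without the rational-point hypothesis. Writing $\FF=R^1\pi_*\G_m$ and $\GG=j_*j^*\FF$, the Leray spectral sequence together with Artin's vanishing $R^2\pi_*\G_m=0$ still yields $\Br(\XX)\cong H^1(\CC,\FF)$ (using $\Br(\CC)=0$, but now without a section one must also verify that $H^3(\CC,\G_m)\to H^3(\XX,\G_m)$ has only a controlled kernel, contributing to $\delta$). The sheaf $\KK=\ker(\FF\to\GG)$ is a skyscraper sheaf whose stalk at $v$ records the cokernel of $\Pic(\tilde\XX_v)\to\Pic(\tilde X_{\tilde\eta_v})$; its order at $v$ is exactly $\delta_v\delta'_v/(\text{unit})$ by a local analysis of multiplicities (this is the Lichtenbaum-type local index-period calculation recalled in the introduction for $\CC$). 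Taking cohomology of $0\to\KK\to\FF\to\GG\to0$ and using Lang's theorem ($H^1(\CC_x,\FF)=0$) yields an exact sequence
\begin{equation*}
0\to H^0(\CC,\GG)/\mathrm{image}\to H^1(\CC,\KK)\to\Br(\XX)\to\sha(\CC,\GG)\to0,
\end{equation*}
where $\sha(\CC,\GG)$ is identified with $\sha(J_X)$ as in the sketch of Grothendieck's theorem.

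The second step is to convert this exact sequence into a numerical identity. The order of $H^1(\CC,\KK)$ is $\prod_v\delta_v\delta'_v$ up to the globally imposed compatibility, while the image/kernel discrepancy from the edge map $\Pic(\XX)\to\Pic^0_{X/K}(K)\to\Br(K)$ contributes the global factor $\delta$. Squaring appears because the same obstruction enters both as a kernel and as a cokernel: on one side one pairs the $\delta$-torsion coming from $H^3(\CC,\G_m)$ with the $\delta$-torsion of a Cassels-Tate-like pairing on $\sha(J_X)$; on the other side one uses the Lichtenbaum relation between $\delta$ and $\delta'$ to convert the asymmetric contribution into a symmetric $\delta^2$.

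The final step is the squareness of $|\Br(\XX)|$. Under the equivalent conditions of Theorem \ref{thm:main-comparison}, $\Br(\XX)$ is finite, and for a smooth proper surface over $\Fq$ Poincar\'e duality combined with the Weil pairing equips $\Br(\XX)$ with a canonical non-degenerate alternating pairing (Milne's duality for surfaces over finite fields). Non-degenerate alternating implies the order is a square; alternatively, one can deduce squareness directly from the formula $|\Br(\XX)|\delta^2=|\sha(J_X)|\prod_v\delta_v\delta'_v$ once one knows (via the Flach/Poonen-Stoll refinement of Cassels-Tate for Jacobians) that $|\sha(J_X)|$ is a square times $\delta^2\prod_v(\delta_v\delta'_v)^{-1}$ up to unit squares. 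The main obstacle is the careful bookkeeping of torsion in step two: precisely matching the local contributions to $H^1(\CC,\KK)$ with the local index-period invariants $\delta_v,\delta'_v$, and showing that the global index enters only through the square $\delta^2$ and not $\delta$ itself. This requires either a direct duality-theoretic identification of the Cassels-Tate pairing on $\sha(J_X)$ with the intersection-theoretic pairing on $\Br(\XX)$, or a delicate argument passing through an extension $K'/K$ of degree $\delta$ over which $X$ acquires a rational point and then descending.
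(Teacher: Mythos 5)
First, note that the paper itself gives no proof of this proposition: it is quoted directly from \cite{LiuLorenziniRaynaud05}, so your sketch must stand on its own. Your overall strategy --- rerun Grothendieck's comparison of $\Br(\XX)$ with $\sha(J_X)$ without the section hypothesis and account for the defect through local and global period/index invariants --- is the right general shape and is broadly what Liu--Lorenzini--Raynaud do. But there is a genuine error in your squareness argument. The claim that Poincar\'e duality and the Weil pairing equip $\Br(\XX)$ with a canonical non-degenerate \emph{alternating} pairing is precisely the classical misconception that the paper alludes to in the sentence immediately following the proposition. The canonical pairing is skew-symmetric, but on the $2$-primary part skew-symmetric does not imply alternating, and it is known (Urabe, and the analogous Poonen--Stoll phenomenon for $\sha$ of Jacobians without a rational divisor class of degree $1$) that alternating-ness cannot simply be asserted. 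The whole point of \cite{LiuLorenziniRaynaud05} is that $|\Br(\XX)|$ is a square \emph{nevertheless}; their proof deduces squareness from the displayed formula combined with the Poonen--Stoll analysis of exactly how far $|\sha(J_X)|$ is from a square in terms of $\delta$, $\delta'$, $\delta_v$, $\delta'_v$. So the logical order is the reverse of your primary argument, and your fallback ("deduce squareness from the formula via Poonen--Stoll") is the correct route but is left entirely uncarried-out.

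The second gap is the numerical bookkeeping, which you yourself flag as ``the main obstacle.'' The assertion that the stalk of $\KK$ at $v$ has order $\delta_v\delta'_v$ up to units is not justified and is not literally true: that stalk is governed by the component group and multiplicities of the fiber $\XX_v$, and converting this into the local period and index requires the local duality results of \cite{MilneADT}; moreover $H^1(\CC,\KK)$ for a skyscraper sheaf over a finite field is the product of the $H^1(G_{k_v},\KK_{\vbar})$, not of the orders of the stalks. Similarly, ``the same obstruction enters both as a kernel and as a cokernel'' is not an argument for the exponent $2$ on $\delta$, and one must also track the failure of $\Br(\XX)\to H^1(\CC,\FF)$ to be an isomorphism when $\pi$ has no section (the kernel of $H^3(\CC,\G_m)\to H^3(\XX,\G_m)$) and the failure of cohomological flatness, neither of which you pin down. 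Until each kernel and cokernel in the chain of exact sequences linking $\Br(\XX)$, $H^1(\CC,\GG)$, $\sha(J_X)$, and the local terms is computed, the formula is not established; that computation is the actual content of \cite{LiuLorenziniRaynaud05} and is genuinely delicate.
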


We refer to \cite{LiuLorenziniRaynaud05} for the interesting history
of misconceptions about when the order of the Brauer group or
Tate-Shafarevich group is a square.

\chapter{Complements}\label{ch:complements}

\section{Abelian varieties over $K$}
Assume that $k$ is finite and let $A$ be an abelian variety over $K$.
Then the $BSD$ and refined $BSD$ conjectures make sense for $A$.  (See
\cite{KatoTrihan03} for the statement---it involves also the dual
abelian variety.)  Kato-Trihan and predecessors proved in this case
too that $\rk A(K)\le\ord_{s=1}L(A,s)$ with equality if and only if
$\sha(A)[\ell^\infty]$ is finite for one $\ell$ if and only if
$\sha(A)$ is finite, and that when these conditions hold, the refined
$BSD$ conjecture does too.

We add one simple observation to this story:

\begin{lemma}
If the $BSD$ conjecture holds for Jacobians over $K$, then it also holds
for abelian varieties over $K$.
\end{lemma}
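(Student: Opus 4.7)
The plan is to reduce from a general abelian variety $A/K$ to a Jacobian by exhibiting $A$ as an isogeny factor of some $J_Y$, and then to exploit the additivity of both sides of $BSD$ under isogeny together with the general inequality $\rk A(K)\le\ord_{s=1}L(A,s)$ (which, as the paper notes, is available for abelian varieties by work of Kato--Trihan and predecessors).

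First I would construct a suitable curve. The field $K=k(\CC)$ is infinite, so one may apply Bertini over $K$: choose a projective embedding $A\hookrightarrow\P^N_K$ coming from an ample divisor, and intersect iteratively with $K$-rational hyperplanes passing through the identity $0\in A(K)$, to produce a smooth projective geometrically irreducible curve $Y\subset A$ defined over $K$ with $0\in Y(K)$. Using $0$ as base point, the inclusion $Y\hookrightarrow A$ factors through a $K$-homomorphism $\phi:J_Y\to A$ by the Albanese property of $J_Y$. For a sufficiently general sequence of hyperplane sections, $Y$ is not contained in any proper abelian subvariety of $A$, so the image $\phi(J_Y)$ equals $A$, i.e., $\phi$ is surjective.

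Next I would invoke Poincar\'e's complete reducibility theorem, which is valid over any field in the isogeny category of abelian varieties: setting $A'=(\ker\phi)^0_{\mathrm{red}}$, there is a $K$-isogeny
\begin{equation*}
J_Y\;\sim\;A\times A'.
\end{equation*}
Both the Mordell--Weil rank and the order of vanishing of the $L$-function at $s=1$ are additive under products and invariant under $K$-isogeny (the $L$-function is isogeny-invariant because isogenous abelian varieties have isomorphic rational Tate modules at every place, and rank is invariant by the obvious argument using the isogeny and its dual). Hence
\begin{align*}
\rk J_Y(K)&=\rk A(K)+\rk A'(K),\\
\ord_{s=1}L(J_Y,s)&=\ord_{s=1}L(A,s)+\ord_{s=1}L(A',s).
\end{align*}

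Assuming $BSD$ for the Jacobian $J_Y$, the left-hand sides are equal. Combined with the general inequalities $\rk A(K)\le\ord_{s=1}L(A,s)$ and $\rk A'(K)\le\ord_{s=1}L(A',s)$, equality in the sum forces equality in each term. In particular $BSD$ holds for $A$, as desired.

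The main obstacle is the first step: producing the curve $Y$ over $K$ (not merely over $\overline{K}$) so that $J_Y\twoheadrightarrow A$ is defined over $K$ and the kernel admits a $K$-rational complement in the isogeny category. The Bertini argument requires infinitude of $K$ (immediate here) and care to arrange a $K$-rational point on $Y$ to make the Albanese factorization $K$-rational; once this is set up, the Poincar\'e reducibility and the additivity of ranks and $L$-orders are standard, and the rest of the argument is purely formal.
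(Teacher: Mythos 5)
Your proposal is correct and follows essentially the same route as the paper: express $A$ as an isogeny factor of a Jacobian $J_Y$, use additivity and isogeny-invariance of both sides of $BSD$, and conclude from the general ``rank $\le$ ord'' inequality for abelian varieties. The paper simply cites as ``well known'' the fact that every abelian variety over $K$ is, up to isogeny, a direct factor of a Jacobian; you spell out the standard Bertini--Albanese--Poincar\'e-reducibility argument underlying that fact, which is a fine thing to do. (One small remark: a $K$-rational point on $Y$ is not strictly necessary to get the $K$-homomorphism $J_Y\to A$ --- the map $\divr^0(Y_{\overline K})\to A(\overline K)$ given by summation in $A$ descends to a Galois-equivariant, hence $K$-rational, homomorphism $J_Y\to A$ --- but arranging one as you do is harmless and makes the Albanese factorization transparent.)
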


\begin{proof}
  It is well known that given an abelian variety $A$ over $K$, there
  is another abelian variety $A'$ over $K$ and a Jacobian $J$ over $K$
  with an isogeny $J\to A\oplus A'$.  If $BSD$ holds for Jacobians, then
  it also holds for $A\oplus A'$.  But since we have an inequality
  ``rank $\le$ ord'' for abelian varieties, equality for the direct
  sum implies equality for the factors.  Thus $BSD$ holds for $A$ as
  well.
\end{proof}

\section{Finite $k$, other special value formulas}
\label{s:higher dimensions}
Special value conjectures for varieties over finite fields can be made
more streamlined by stating them in terms of Euler characteristics of
cohomology of suitable complexes of sheaves (so called ``motivic
cohomology'') and the whole zeta function (as opposed to just the
piece corresponding to one part of cohomology)---contrast with the
Artin-Tate conjecture which relates to $H^2$.  See for example
\cite{Schneider82}, \cite{Lichtenbaum83}, and \cite{Milne86a}.  See
also the note on \cite{Milne86a} on Milne's web site for a
particularly streamlined statement using Weil-\'etale cohomology.

\section{Finitely generated $k$}
The analytic and algebraic conjectures discussed in the preceding
chapters have generalizations to the case where $k$ is finitely
generated, rather than finite.  In this section we give a quick
overview of the statements and a few words about some of the
comparisons.  

Assume that $k$ is finitely generated over its prime field.  We assume
as always that $\XX$ is a smooth, proper, geometrically irreducible
surface over $k$ equipped with a flat, projective, relatively minimal
morphism $\pi:\XX\to\CC$ where $\CC$ is a smooth, geometrically
irreducible curve over $k$.  As usual, $K=k(\CC)$ is the function
field of $\CC$ and $X\to\spec K$ is the generic fiber of $\pi$.

\subsection{Algebraic conjectures}
Fix a prime $\ell$ (with $\ell=p$ allowed).  The conjectures
$T_1(\XX,\ell)$ and $S(J_X,\ell)$ have straightforward generalizations
to the case where $k$ is finitely generated.  

As before, we have the exact sequence
$$0\to\NS(\XXbar)\tensor\Ql\to H^2(\XXbar,\Ql(1))\to V_\ell
\Br(\XXbar)\to 0$$
induced from the Kummer sequence as in Subsection~\ref{ss:seqs}. 

Taking $G_k$ invariants, we have an exact sequence
$$0\to\NS(\XX)\tensor\Ql\to H^2(\XXbar,\Ql(1))^{G_k}\to (V_\ell
\Br(\XXbar))^{G_K}\to0.$$ 
We have a 0 on the right since $H^1(k,\NS(\XXbar)\tensor\Ql)$ is both a
torsion group and a $\Ql$ vector space.  Conjecture $T_1(\XX,\ell)$ is
the statement that the first map above is an isomorphism.  

The exact sequence above shows that vanishing of a certain Brauer
group (namely $(V_\ell \Br(\XXbar))^{G_K}$) is equivalent to
$T_1(\XX,\ell)$.  Note that one cannot expect that $\Br(\XX)$ is
finite in general---it may contain copies of $\Br(k)$.

It is also straightforward to generalize the Selmer group and
Tate-Shafarevich conjectures.  Define groups $\Sel(J_X,\Zl)$ and
$\sha(J_X)$ exactly as in Subsection~\ref{ss:Sel-Sha}.  Then we have
an exact sequence
$$0\to J_X(K)\tensor\Zl\to\Sel(J_X,\Zl)\to T_\ell\sha(J_X)\to0.$$ 
The Selmer group conjecture is that the first map is an isomorphism,
and this is obviously equivalent to the vanishing of $T_\ell\sha(J_X)$.  

\subsection{Setup for analytic conjectures}
Recall that $k$ is assumed to be finitely generated over its prime
field.  This means there is an irreducible, regular scheme $Z$ of
finite type over $\spec\Z$ whose function field is $k$.  Of course $Z$
is only determined up to birational isomorphism and we may shrink it
in the course of the discussion.  We write $d$ for the dimension of
$Z$.

We now choose models of the data over $Z$.  
%%   dlu 2012-Oct-26
%%   That is 
That is,
%%   dlu 2012-Oct-26
we choose schemes
$\tilde\XX$ and $\tilde\CC$ with morphisms 
\begin{equation*}
\xymatrix{\tilde\XX\ar^h[rr]\ar_f[rd]&&\tilde\CC\ar^g[ld]\\
&Z}
\end{equation*}
such that the generic fiber is 
\begin{equation*}
\xymatrix{\XX\ar^\pi[rr]\ar[rd]&&\CC\ar[ld]\\
&\spec k.}
\end{equation*}
Possibly after shrinking $Z$, we can and will assume that $f$, $g$,
and $h$ are smooth and proper.  For a closed point $z\in Z$, the
residue field at $z$ is finite, and we assume that the fiber over $z$
of the diagram above satisfies our usual hypotheses (in particular
$h_z$ should be relatively minimal).  Shrinking $Z$ further, we may
assume that $h$ is ``equisingular'' in the sense that as $\zbar$
varies through geometric points over closed points $z\in Z$, the
fibers $\tilde\XX_\zbar\labeledto{h_\zbar}\tilde\CC_\zbar$ have the
same number of singular fibers with the same configurations of
components.  (More precisely, we may assume that the set of critical
values of $h$ is \'etale over $Z$.)

\subsection{Analytic conjectures}
For the rest of this section, $\ell$ is a prime $\neq\ch(k)$.
For each smooth, projective scheme $V$ of dimension $n$ over a finite
field of cardinality $q$, we write the
zeta function of $V$ as
$$\zeta(V,s)=\frac{P_1(V,s)\cdots P_{2n-1}(V,s)}{P_0(V,s)\cdots
  P_{2n}(V,s)}$$
where the $P_i$ are the characteristic polynomials of Frobenius at
$q^{-s}$:
$$P_i(V,s)=\det\left(1-q^{-s}\Fr_q|H^i(\overline{V},\Ql)\right).$$

Now define
$$\Phi_2(\XX/k,s)=\prod_{z\in Z}P_2(\tilde\XX_z,s)^{-1}.$$
Here the product is over closed points $z$ of $Z$ and $\tilde\XX_z$ is
the fiber of $f$ over $z$, which by our hypotheses is a smooth
projective surface over the residue field at $z$.

Similarly, define
$$\Phi_1(X/K,s)=\prod_{c\in \tilde \CC} P_1(\tilde\XX_c,s)^{-1}$$
here the product is now over the closed points of $\CC$.  In this
case, the fibers are projective curves, but they will not in general
be smooth or irreducible and so we need a slight extension of our
definition 
%%   dlu 2012-Oct-26
%%   
of
%%   dlu 2012-Oct-26
the polynomials $P_1$.  (For the order of vanishing
conjecture to follow, we could shrink $\tilde\CC$ to avoid this
problem, but for the comparison between conjectures to follow, it is
more convenient to set up the definitions as we have.)

With these definitions, we have conjectures:
$$T_2(\XX):\qquad-\ord_{s=d+1}\Phi_2(\XX/k,s)=\rk\NS(\XX)$$
and
$$BSD(X):\qquad \ord_{s=d+1}\Phi_1(X/K,s)=\rk J_X(K).$$

\subsection{Comparison of conjectures}
Comparing the various conjectures for finitely generated $k$ seems to
be much more complicated than for finite $k$.

The Shioda-Tate isomorphism  \ref{prop:S-T} works for general $k$ and
gives a connection between the finitely generated groups $\NS(\XX)$,
$J_X(K)$, and $B(k)$.  

To discuss the zeta function side of the analytic conjectures, we
make the assumption that $\ch(k)=p>0$.  (In characteristic 0, very
little can be said about zeta functions without automorphic techniques
which are far outside the scope of these notes.)  So let us assume
that $Z$ has characteristic $p$ and that its field of constants has
cardinality $q$.  

In this case, the Grothendieck-Lefschetz trace
formula shows that 
%%   dlu 2012-Oct-26
%%   the the 
the 
%%   dlu 2012-Oct-26
order of pole of $\Phi_2(\XX/k,s)$ at
$s=d+1$ is equal to the multiplicity of $q^{d+1}$ as an eigenvalue of
$\Fr_q$ on $H^{2d}_c(\overline Z,R^2f_*\Ql)$ for any $\ell\neq p$.
The Leray spectral sequence for $f=gh$ relates $R^2f_*\Ql$ to
$R^2g_*(h_*\Ql)$, $R^1g_*(R^1h_*\Ql)$, and $g_*(R^2h_*\Ql)$.  It is not
hard to work out $H^{2d}_c(\overline Z,R^2g_*(h_*\Ql))$ and
$H^{2d}_c(\overline Z,g_*(R^2h_*\Ql))$ and one finds that they
contribute exactly $1+\sum_v(f_v-1)$ (notation as in
Corollary~\ref{cor:STcor}) to the order of pole.  

It remains to consider $H^{2d}_c(\overline Z,R^1g_*(R^1h_*\Ql))$.  The
Leray spectral sequence for $h$ leads to an exact sequence 
\begin{multline*}
0\to H^{2d}_c(\overline Z,R^1g_*(R^1h_*\Ql))\to 
H^{2d+1}_c(\overline {\tilde\CC},R^1h_*\Ql)\\
\to H^{2d-1}_c(\overline Z,R^2g_*(R^1h_*\Ql))\to0
\end{multline*}
The middle group is exactly what controls $\Phi_1(X/K,s)$ (in other
words, its order of vanishing at $s=d+1$ is the multiplicity of
$q^{d+1}$ as eigenvalue of $\Fr_q$ on this group).  On the other hand,
after some unwinding, one sees that the group $H^{2d-1}_c(\overline
Z,R^2g_*(R^1h_*\Ql))$ is related to $B$ via a $BSD$ conjecture.  

Thus it seems possible (after filling in many details) to show that
$T_2(\XX)$ and $BSD(J_X)+BSD(B)$ should be equivalent in positive
characteristic.  (We remark that Tate's article \cite{Tate65} gives a
different analytic comparison, namely between $T_2(\XX)$ and a $BSD$
conjecture related to the rank of $\Picvar_\XX$, for general finitely
generated $k$.)

A comparison between the algebraic conjectures along the lines of
Proposition~\ref{prop:Sel-H1R1} looks like an interesting project.  We
remark that it will certainly be more complicated than over a finite
$k$.  For example, if $K'=K\kbar$, then in the limit
$$\varprojlim_n J_X(K')/\ell^n$$
the subgroup $B(\kbar)$ dies as it is $\ell$-divisible.  Thus the
kernel of a map $\Sel(J_X,\Zl)\to\Sel(J_X/K',\Zl)^{G_k}$ cannot be
finite in general.

Finally, a comparison of analytic and algebraic conjectures, along the
lines of the results of Artin-Tate and Milne (as always assuming $k$
has positive characteristic), also seems to be within the realm of
current technology (although the author does not pretend to have
worked out the details).

I know of no strong evidence in favor of the conjectures in this
section beyond the case where $k$ is a global field.

\section{Large fields $k$}
We simply note that if $k$ is ``large'' then one cannot expect
finiteness of $\Br(\XX)$ or $\sha(J_X)$.  For example, the exact
sequence
$$0\to\NS(\XXbar)\tensor\Zl\to
H^2(\XXbar,\Zl(1))\to T_\ell\Br(\XXbar)\to 0$$ 
shows that over a separably closed field, when the rank of $H^2$ is
larger than the Picard number, $\Br$ has divisible elements.

It may also happen that $\Br(\XX)$ or $\sha(J_X)$ has infinite
$p$-torsion.  For examples in the case of $\sha$ (already for elliptic
curves), see \cite[7.12b]{Ulmer91}, where there is a
Selmer group which is in a suitable sense linear as a function of the
finite ground field, and is thus infinite when $k=\Fpbar$.  This
phenomenon is closely related to torsion in the de\thinspace Rham-Witt
cohomology groups.  See \cite{Ulmer96a} for some related issues.

\chapter{Known cases of the Tate conjecture and
  consequences}\label{ch:cases} 

\section{Homomorphisms of abelian varieties 
and $T_1$ for products}
Let $k$ be a field finitely generated over its prime field.  In the
article where he first conjectured $T_1(\XX,\ell)$ for smooth
projective varieties $\XX$ over $k$, Tate explained that the case of
abelian varieties is equivalent to an attractive statement
on homomorphisms of abelian varieties.  Namely, if $A$ and $B$ are
abelian varieties over $k$, then for any $\ell\neq\ch(k)$, the
natural homomorphism
\begin{equation}\label{eq:homs}
\Hom_k(A,B)\tensor\Zl\to \Hom_{G_k}(T_\ell A,T_\ell B)
\end{equation}
should be an isomorphism.

In \cite{Tate66a}, Tate gave an axiomatic framework showing that
isomorphism in \eqref{eq:homs} follows from a fundamental finiteness
statement for abelian varieties over $k$, which is essentially trivial 
for finite $k$.  The finiteness statement (and thus isomorphism in
\eqref{eq:homs}) was later proven for all finitely generated
fields of characteristic $p>0$ by Zarhin \cite{Zarhin76} ($p>2$) and
%%   dlu 2012-Oct-26
%%   Moret-Bailly \cite{MoretBailly85} ($p=2$), 
Mori ($p=2$); see \cite{MoretBailly85}, 
%%   dlu 2012-Oct-26
and for finitely generated
fields of characteristic zero by Faltings \cite{Faltings86}. 

Isomorphism in \eqref{eq:homs} in turn implies $T_1(\XX,\ell)$ for
products of curves and abelian varieties (and more generally for
products of varieties for which $T_1$ is known).  This is explained,
for example, in \cite{Tate66a} or \cite{Ulmer11}.

Summarizing the part of this most relevant for us, we have the
following statement.

\begin{thm}\label{thm:T1products}
If $k$ is a finitely generated field and $\XX$ is a product of curves
over $k$, then for all $\ell\neq\ch(k)$ we have an isomorphism
$$\NS(\XX)\tensor\Zl\isoto H^2(\XXbar,\Zl(1))^{G_k}$$
\end{thm}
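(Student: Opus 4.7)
The plan is to combine the Kunneth formula with the Faltings--Zarhin--Mori isomorphism \eqref{eq:homs} and the classical correspondence between homomorphisms of Jacobians and divisor classes on products of curves. The Kunneth formula decomposes $H^2(\XXbar,\Ql(1))$ for a product of curves into pieces involving at most two factors; the single-factor pieces are one-dimensional and exhausted by cycle classes of obvious fibers, so the essential case is $\XX=C_1\times C_2$ where $C_1,C_2$ are smooth, projective, geometrically irreducible curves over $k$. After a harmless finite base change I may assume each $C_i$ has a $k$-rational point $p_i$.

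For $\XX=C_1\times C_2$, Kunneth produces the $G_k$-equivariant decomposition
\begin{equation*}
H^2(\XXbar,\Ql(1))=H^2(\overline{C}_1,\Ql(1))\oplus\bigl(H^1(\overline{C}_1,\Ql)\tensor H^1(\overline{C}_2,\Ql(1))\bigr)\oplus H^2(\overline{C}_2,\Ql(1)),
\end{equation*}
preserved under $G_k$-invariants. The outer summands are one-dimensional and spanned by the cycle classes of $\{p_1\}\times C_2$ and $C_1\times\{p_2\}$, already in $\NS(\XX)\tensor\Zl$. Using Poincar\'e duality on each curve, $H^1(\overline{C}_i,\Ql(1))\cong V_\ell J_{C_i}$ and $H^1(\overline{C}_i,\Ql)\cong V_\ell J_{C_i}^\vee$, so the $G_k$-invariants of the middle summand become
\begin{equation*}
\bigl(V_\ell J_{C_1}^\vee\tensor V_\ell J_{C_2}\bigr)^{G_k}\cong\Hom_{G_k}(V_\ell J_{C_1},V_\ell J_{C_2}),
\end{equation*}
and \eqref{eq:homs} identifies this with $\Hom_k(J_{C_1},J_{C_2})\tensor\Ql$.

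The final step is to match $\Hom_k(J_{C_1},J_{C_2})\tensor\Ql$ with the quotient of $\NS(\XX)\tensor\Ql$ by the two outer classes, via the classical theory of divisorial correspondences: a homomorphism $\phi:J_{C_1}\to J_{C_2}$ produces a divisor class on $C_1\times C_2$, well-defined modulo the two outer classes, and a direct compatibility check shows that under the cycle class map this correspondence maps to the middle Kunneth piece and recovers the element of $\Hom_{G_k}(V_\ell J_{C_1},V_\ell J_{C_2})$ we started with. This yields the rational isomorphism $\NS(\XX)\tensor\Ql\isoto H^2(\XXbar,\Ql(1))^{G_k}$.

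To promote to $\Zl$ coefficients, note that $H^*(\overline{C}_i,\Zl)$ is torsion-free, so by Kunneth $H^2(\XXbar,\Zl(1))$ is also torsion-free; taking $G_k$-invariants of $0\to\NS(\XXbar)\tensor\Zl\to H^2(\XXbar,\Zl(1))\to T_\ell\Br(\XXbar)\to0$ and combining with the rational isomorphism (which gives $(V_\ell\Br(\XXbar))^{G_k}=0$, hence $T_\ell\Br(\XXbar)^{G_k}=0$ by torsion-freeness) upgrades to the integral isomorphism. The main obstacle is entirely the Faltings--Zarhin--Mori input \eqref{eq:homs}; the rest is a formal unwinding of the Kunneth decomposition and classical divisorial correspondence theory, as already noted in the excerpt.
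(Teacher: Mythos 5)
Your route---K\"unneth, identification of the middle summand with $\Hom_{G_k}(V_\ell J_{C_1},V_\ell J_{C_2})$, the Zarhin--Mori--Faltings isomorphism \eqref{eq:homs}, and the classical correspondence between $\Hom_k(J_{C_1},J_{C_2})$ and divisor classes on $C_1\times C_2$---is exactly the argument the paper intends: the text offers no proof beyond citing \cite{Tate66a} and \cite{Ulmer11}, where this computation is carried out, and your rational statement $\NS(\XX)\tensor\Ql\isoto H^2(\XXbar,\Ql(1))^{G_k}$ is established correctly.

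The integral upgrade, however, has a genuine gap, and it is not about torsion-freeness. Taking $G_k$-invariants of the Kummer sequence identifies $H^2(\XXbar,\Zl(1))^{G_k}$ with $(\NS(\XXbar)\tensor\Zl)^{G_k}$ once the Brauer term dies, but the theorem's source is $\NS(\XX)\tensor\Zl$, i.e.\ the image of $\Pic(\XX)$. These two groups coincide over finite $k$ by the Lang's-theorem argument of Subsection~\ref{ss:seqs}, but over an infinite finitely generated $k$ the inclusion $\NS(\XX)\into\NS(\XXbar)^{G_k}$ can have cokernel divisible by $\ell$. Concretely, for $\XX=C\times\P^1$ with $C$ a genus-one curve of index $\ell$ over $k=\Q$, every class in $\NS(\XX)$ has its $[\{pt\}\times\P^1]$-coefficient divisible by $\ell$ (that coefficient is the degree of a $k(t)$-rational divisor on $C\times_kk(t)$, and the index does not drop under a purely transcendental extension), yet the Galois-invariant fiber class is a $\Zl$-basis vector of $H^2(\XXbar,\Zl(1))^{G_k}$. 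Your ``harmless finite base change'' to acquire the points $p_i$ hides exactly this obstruction and cannot be reversed at the $\Zl$-level, since $\NS(\XX)$ genuinely grows under the extension. So your argument proves the theorem as stated when $k$ is finite, and in general proves the $\Ql$-version (equivalently, the $\Zl$-version with source $(\NS(\XXbar)\tensor\Zl)^{G_k}$), which is what the rest of the paper actually uses; the literal $\Zl$-statement over every finitely generated field would require the additional surjectivity of $\NS(\XX)\tensor\Zl\to(\NS(\XXbar)\tensor\Zl)^{G_k}$, which the example above shows can fail.
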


%[mention isogeny theorem too, so L determines A up to isog.]

\section{Descent property of $T_1$ and domination by a 
product of curves}
%%   dlu 2012-Oct-26
%%   This 
The
%%   dlu 2012-Oct-26
property in question is the following.

\begin{lemma}\label{lemma:domination}
Suppose $k$ is a finitely generated field and $\XX$ is smooth, proper
variety over $k$ satisfying $T_1$.  If $\XX\ratto\YY$ is a dominant
rational map, then $T_1(\YY)$ holds as well.
\end{lemma}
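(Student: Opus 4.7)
The plan is to reduce to the case where the rational map is an actual morphism, and then transfer the algebraicity of Galois-invariant classes from $\XX$ to $\YY$ by pulling back, using $T_1(\XX)$, and pushing forward.

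First I would resolve the indeterminacy of $\phi:\XX\ratto\YY$ by a sequence of blow-ups along smooth centers, producing a smooth proper $\XX'$ with a birational morphism $p:\XX'\to\XX$ and a morphism $f:\XX'\to\YY$ extending $\phi\circ p$. Here I would use that $T_1$ is preserved under blow-ups with smooth center: the blow-up formula gives $H^2(\overline{\XX'},\Ql(1))=p^*H^2(\XXbar,\Ql(1))\oplus E$, where $E$ is spanned by the classes of exceptional divisors. Every $G_k$-invariant class in $E$ is a $\Ql$-linear combination of classes of algebraic divisors, and every $G_k$-invariant class in $p^*H^2(\XXbar,\Ql(1))$ is algebraic by $T_1(\XX)$ applied to a $G_k$-invariant lift. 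So $T_1(\XX')$ holds. (In positive characteristic in large dimension one may prefer to apply de Jong's alteration to the graph of $\phi$; minor additional bookkeeping with the degree of the alteration is needed but the argument is essentially the same.)

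Now $f:\XX'\to\YY$ is proper and surjective between smooth proper $k$-varieties. Set $n=\dim\YY$ and $r=\dim\XX'-n$. If $r=0$, then $f$ is generically finite of some degree $d>0$, so $f_*\circ f^*=d\cdot\mathrm{id}$ on $H^2(\YYbar,\Ql(1))$ and on $\NS(\YY)\tensor\Ql$. Given $\alpha\in H^2(\YYbar,\Ql(1))^{G_k}$, the class $f^*\alpha$ is $G_k$-invariant, hence algebraic by $T_1(\XX')$: $f^*\alpha=\mathrm{cl}(D)$ with $D\in\NS(\XX')\tensor\Ql$. Then $d\alpha=f_*f^*\alpha=\mathrm{cl}(f_*D)\in\NS(\YY)\tensor\Ql$, so $\alpha\in\NS(\YY)\tensor\Ql$, which is $T_1(\YY)$.

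For $r>0$, I would cut down using general hyperplane sections. Choose $r$ sufficiently ample divisors on $\XX'$ whose common intersection is a smooth subvariety $Z\subset\XX'$ of dimension $n$ with $g:=f|_Z:Z\to\YY$ surjective and generically finite of some degree $d$; a Bertini argument (performed after an enlargement of $k$ if necessary, since $T_1$ is insensitive to finite base extension) provides such a $Z$. For $\alpha\in H^2(\YYbar,\Ql(1))^{G_k}$, apply $T_1(\XX')$ to $f^*\alpha$ to get $f^*\alpha=\mathrm{cl}(D)$ with $D\in\NS(\XX')\tensor\Ql$; after adjusting $D$ by linear equivalence we may assume it meets $Z$ properly. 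Restricting via $i:Z\into\XX'$ gives $g^*\alpha=i^*\mathrm{cl}(D)=\mathrm{cl}(i^*D)\in\NS(Z)\tensor\Ql$. Pushing forward under $g$, which is generically finite of degree $d$ between smooth proper varieties of dimension $n$, yields $d\alpha=g_*g^*\alpha=\mathrm{cl}(g_*i^*D)\in\NS(\YY)\tensor\Ql$, and again $\alpha\in\NS(\YY)\tensor\Ql$.

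The main obstacle is arranging the resolution/alteration and the Bertini section so that $T_1$ really does descend through each step; once the geometry is set up, the $f^*$-then-cut-then-$g_*$ trick handles the cohomology cleanly. Everything else (injectivity of the cycle-class map into $H^2(\YYbar,\Ql(1))^{G_k}$, which is always automatic, and compatibility of cycle class with $f^*$, $i^*$, $g_*$) is formal.
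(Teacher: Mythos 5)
The cohomological core of your argument (pull back by $f^*$, use $T_1(\XX')$, cut by hyperplane sections to reach a generically finite map, push forward and divide by the degree; Galois-invariance of $d\alpha$ then forces the resulting cycle class to lie in $\NS(\YY)\tensor\Ql$) is correct and standard. The genuine gap is in your very first step. The lemma is stated for an arbitrary finitely generated field $k$ --- in this paper the case that matters is $\ch(k)=p>0$ and $\XX$ a product of curves of arbitrary dimension --- and there resolution of indeterminacy by blow-ups along smooth centers is simply not available. Indeed, the paper pointedly does not prove the lemma this way: it cites Tate \cite{Tate94} for a proof and remarks that Schoen's treatment \cite{Schoen96} carries ``an unnecessary hypothesis on resolution of singularities.'' Your proof is essentially Schoen's, with exactly that hypothesis built in.

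Your parenthetical escape route via de Jong's alterations does not work where you have placed it, and the needed repair is not ``minor bookkeeping.'' An alteration of the graph produces a smooth proper $\XX''$ whose map $q:\XX''\to\XX$ is generically finite of some degree $e>1$, not birational. But $T_1$ only \emph{descends} along generically finite surjections (via $q_*q^*=e$); it does not ascend from $\XX$ to $\XX''$. Given a $G_k$-invariant class $\beta$ on $\overline{\XX''}$, the class $q_*\beta$ is algebraic by $T_1(\XX)$, but $q^*q_*\beta\neq e\beta$ in general, so you cannot conclude $T_1(\XX'')$, and without it the rest of your argument has no input. (The same wrong-way problem blocks replacing the blow-ups by any alteration-based desingularization of the graph.) A resolution-free argument has to work with the possibly singular graph closure $\Gamma\subset\XX\times\YY$ as a correspondence on the smooth ambient product, or --- closer to Tate's route --- use the reformulation of $T_1(\YY,\ell)$ as the vanishing of $(V_\ell\Br(\YYbar))^{G_k}$ together with the fact that a dominant rational map of smooth proper varieties induces a Galois-equivariant injection $V_\ell\Br(\YYbar)\into V_\ell\Br(\XXbar)$ (purity for the Brauer group on the domain of definition, injectivity of $\Br$ under purely transcendental extensions, and restriction-corestriction for the finite part). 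A secondary, fixable imprecision: ``$T_1$ is insensitive to finite base extension'' is only clear in the descending direction; but as noted above you do not actually need $Z$ defined over $k$, only over $\kbar$, so the Bertini step is harmless.
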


This is explained in \cite{Tate94} and, with an unnecessary hypothesis
on resolution of singularities, in \cite{Schoen96}.  The latter article
points out the utility of combining Theorem~\ref{thm:T1products} and
Lemma~\ref{lemma:domination} to prove the following result.

\begin{cor}\label{cor:dpc}
If $k$ is a finitely generated field and $\XX$ is a variety admitting a
dominant rational map from a product of curves
$\prod_i\CC_i\ratto\XX$, then $T_1$ holds for $\XX$.
\end{cor}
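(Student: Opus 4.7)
The plan is essentially to chain together the two results that immediately precede the corollary: Theorem \ref{thm:T1products} supplies the Tate conjecture for the source, and Lemma \ref{lemma:domination} transports it to the target along the given dominant rational map.

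More concretely, I would first fix a prime $\ell \neq \ch(k)$ and set $\WW = \prod_i \CC_i$, so that we are given a dominant rational map $\WW \ratto \XX$. By Theorem \ref{thm:T1products} applied to $\WW$, the cycle class map induces an isomorphism
$$\NS(\WW)\tensor\Zl \isoto H^2(\overline{\WW},\Zl(1))^{G_k},$$
which is precisely the statement $T_1(\WW,\ell)$. Since $\WW$ is smooth and proper over $k$ (as a product of smooth projective curves) and the rational map $\WW \ratto \XX$ is dominant, Lemma \ref{lemma:domination} applies and yields $T_1(\XX,\ell)$. Because $\ell \neq \ch(k)$ was arbitrary, this gives the desired conclusion.

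There is essentially no obstacle: the corollary is a purely formal combination of the two cited results, and the only thing worth remarking is that Theorem \ref{thm:T1products} is stated only for $\ell \neq \ch(k)$, so the corollary one obtains is likewise restricted to such $\ell$ (this matches the standing convention for $T_1$ in the excerpt). If one wanted to be fastidious, one could note that $\WW$ is automatically smooth, projective, and geometrically irreducible over $k$, so that the hypotheses of both the theorem and the lemma are genuinely met; and one might further observe that the statement is insensitive to the particular product decomposition, since any dominant rational map from \emph{some} product of curves suffices.
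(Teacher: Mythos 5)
Your argument is exactly the one the paper has in mind: the sentence immediately preceding the corollary explains that it follows by combining Theorem~\ref{thm:T1products} (which gives $T_1$ for the product of curves) with Lemma~\ref{lemma:domination} (which transports $T_1$ along a dominant rational map), and that is precisely the chain you carry out. Your added remarks---that $\WW=\prod_i\CC_i$ is smooth and proper so both cited results genuinely apply, and that the conclusion is naturally restricted to $\ell\neq\ch(k)$ because Theorem~\ref{thm:T1products} is---are correct and do not change the route.
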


The corollary implies most of the known cases of $T_1$ for surfaces
(some of which were originally proven by other methods).  Namely, we
have $T_1$ for: rational surfaces, unirational surfaces, Fermat
surfaces, and more generally hypersurfaces of dimension 2 defined by
an equation in 4 monomials (with mild hypotheses).  We refer to
\cite{Shioda86} for these last surfaces, which Shioda calls ``Delsarte
surfaces.'' See also \cite{Ulmer02} and \cite{Ulmer07b}.

Of course, for finite $k$, surfaces which satisfy $T_1$ lead to curves
whose Jacobians satisfy $BSD$.  We will explain in the next chapter how
this leads to Jacobians of large rank over global function fields.

%%   dlu 2012-Oct-26
%%   
\begin{rem}
  In a letter to Grothendieck dated 3/31/64
  \cite{GrothendieckSerre01}, Serre constructs an example of a surface
  which is not dominated by a product of curves.  In a note to this
  letter, Serre says that Grothendieck had hoped to prove the Weil
  conjectures by showing that every variety is dominated by a product of
  curves, thus reducing the problem to the known case of curves.
  Thanks to Bruno Kahn for pointing out this letter.  See also
  \cite{Schoen96} for other examples of varieties not dominated by
  products of curves.
\end{rem}
%% dlu 2012-Oct-26

\section{Other known cases of $T_1$ for surfaces}
Assume that $k$ is a finite field.  The other main systematic cases of
$T_1$ for surfaces over $k$ are for $K3$ surfaces.  Namely, Artin and
Swinnerton-Dyer showed in \cite{ArtinSwinnertonDyer73} that $\sha(E)$
is finite for an elliptic curve $E$ over $k(t)$ when the corresponding
elliptic surface $\EE\to\P^1_k$ is a K3 surface.  And Nygaard and Ogus
showed in \cite{NygaardOgus85} that if $\XX$ is a $K3$ surface of
finite height and $\ch(k)\ge5$, then $T_1$ holds for $\XX$.

In a recent preprint, Lieblich and Maulik show that the Tate
conjecture hold for $K3$ surfaces over finite fields of characteristic
$p\ge 5$ if and only if there are only finitely many isomorphism
classes of $K3$s over each finite field of characteristic $p$.  This
is reminiscent of Tate's axiomatization of $T_1$ for abelian
varieties.

It was conjectured by Artin in \cite{Artin74} that a K3 surface of
infinite height has N\'eron-Severi group of rank 22, the maximum
possible, and so this conjecture together with \cite{NygaardOgus85}
would imply the Tate conjecture for $K3$ surfaces over field of
characteristic $\ge5$.  Just as we are finishing these notes, Maulik
and Madapusi Pera have announced (independently) results that would
lead to a proof of the Tate conjecture for $K3$s with a polarization
of low degree or of degree prime to $p=\ch(k)$.

Finally, we note the ``direct'' approach:  We have {\it a priori\/}
inequalities 
$$\rk\NS(\XX)\le\dim_\Ql H^2(\XX,\Ql(1))^{G_k}\le-\ord_{s=1}\zeta(\XX,s)$$
and if equality holds between the ends or between the first two terms,
then we have the full Tate conjectures.  It is sometimes possible to
find, say by geometric construction, enough cycles to force equality.
We will give an example of this in the context of elliptic surfaces in
the next chapter.

\chapter{Ranks of Jacobians}\label{ch:ranks}

In this final chapter, we discuss how the preceding results on the
$BSD$ and Tate conjectures can be used to find examples of Jacobians
with large rank analytic and algebraic rank over function fields over
finite fields.  In the first four sections we briefly review results
from recent publications, and in the last three sections we state a
few results that will appear in future publications.

\section{Analytic ranks in the Kummer tower}
In this section $k$ is a finite field, $K=k(t)$, and for each
positive integer $d$ prime to $p$, $K_d=k(t^{1/d})$.  

It turns out that roughly half of all abelian varieties defined over
$K$ have unbounded analytic rank in the tower $K_d$.  More precisely,
those that satisfy a simple parity condition have unbounded rank.

\begin{thm}\label{thm:analytic-ranks}
  Let $A$ be an abelian variety over $K$ with Artin conductor
  $\n$ (an effective divisor on $\P^1_k$).  Write $\n'$ for the part of
  $\n$ prime to the places 0 and $\infty$ of $K$.  Let $\Swan_v(A)$ be
  the exponent of the Swan conductor of $A$ at a place $v$ of $K$.
  Suppose that the integer
$$\deg(\n')+\Swan_0(A)+\Swan_\infty(A)$$
is odd.  Then there is a constant $c$ depending only on $A$ such that if
$d=p^f+1$ then
$$\ord_{s=1}L(A/K_d,s)\ge\frac{p^f+1}{o_d(q)}-c$$
where $o_q(d)$ is the order of $q$ in $(\Z/d\Z)^\times$.
\end{thm}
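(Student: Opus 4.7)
The plan is to exploit the factorization of the $L$-function over the Kummer tower into twists by Dirichlet-like characters, and then to force odd order vanishing via a sign calculation. After possibly replacing $k$ by $k_d=k(\mu_d)$, the extension $k_d K_d/k_dK$ is cyclic of order $d$ with Galois group $\mu_d$, so that
$$L(A/k_dK_d,s)=\prod_{\chi\in\widehat{\mu_d}}L(A\otimes\chi,s),$$
where each $L(A\otimes\chi,s)$ is the Hasse--Weil $L$-function of the representation $V_\ell A\otimes\chi$ of $G_K$. To recover information over $k$ itself, I would group the characters into orbits under $\chi\mapsto\chi^q$; the product over one Frobenius orbit of length $o_d(q)$ assembles into a single $L$-function over $k$ whose order of vanishing at $s=1$ is the sum of the orders of its constituents.

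Each $L(A\otimes\chi,s)$ admits a functional equation $s\leftrightarrow 2-s$ with a sign $\varepsilon(\chi)=\pm1$; if $\varepsilon(\chi)=-1$ the factor vanishes to odd, hence positive, order at $s=1$. The heart of the argument is therefore a global sign computation. I would factor $\varepsilon(\chi)=\prod_v\varepsilon_v(A\otimes\chi)$ using Laumon's product formula. Since $\chi$ is unramified away from $\{0,\infty\}$, the local factors at $v\notin\{0,\infty\}$ contribute an expression of the form $\chi(\text{unit})\cdot(-1)^{\mathrm{ord}_v(\n')}$, whose product over all $v\neq 0,\infty$ gives a sign of parity $\deg(\n')$. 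At the ramified places $v=0,\infty$, both $A$ and $\chi$ are wild; here I would combine the standard conductor-Swan formula with the observation that for $d=p^f+1$ the twist by a character of exact order $d$ shifts the wild part of the local representation in a controlled way, so the resulting local sign depends only on $\Swan_v(A)\bmod 2$ (plus a bounded correction from the tame part and from characters of order a proper divisor of $d$). Assembling these, I would show that for all but at most $c=c(A)$ characters of $\widehat{\mu_d}$, one has $\varepsilon(\chi)=-1$, the constant $c$ absorbing the at-most $O(1)$ characters ``close to the trivial one'' and the bookkeeping at $0,\infty$.

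Since the sign is Galois-invariant ($\varepsilon(\chi)=\varepsilon(\chi^q)$), the bad orbits inherit a well-defined sign, and each orbit of bad sign contributes at least one to the order of vanishing of $L(A/K_d,s)$ at $s=1$. The cyclic group $\widehat{\mu_d}$ has $d=p^f+1$ elements partitioned into Frobenius orbits of size at most $o_d(q)$, so there are at least $(p^f+1)/o_d(q)$ orbits, and after discarding the at-most $c$ orbits of good sign we arrive at the claimed inequality. I expect the main obstacle to be the wild $\varepsilon$-factor calculation at $0$ and $\infty$: one must verify that the wild contribution behaves additively enough in the twist $A\otimes\chi$ that the parity of $\Swan_0(A)+\Swan_\infty(A)$ controls the sign uniformly in $\chi$. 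This is exactly where the arithmetic of $d=p^f+1$ enters, through the identity $p^f\equiv-1\pmod d$ which pins down the break decomposition of the wild inertia representation of $\chi$; without this relation one would lose control of the local signs and the bound would degrade.
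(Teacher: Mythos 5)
Your overall skeleton is the right one, and it matches the source that the paper actually defers to (the notes give no proof of this theorem, citing \cite[Thm.~4.7]{Ulmer07b}): factor $L(A/K_d,s)$ over the characters of the Kummer covering, group the characters into Frobenius orbits, force a zero from each orbit by a parity argument, and count at least $d/o_d(q)$ orbits. But the step that is supposed to produce the zeros does not work as you have set it up. A single twist $L(A\otimes\chi,s)$ with $\chi\neq\chi^{-1}$ has no functional-equation sign: since the dual of $V_\ell A\otimes\chi$ is $V_\ell A\otimes\chi^{-1}(1)$ (via the polarization), the functional equation relates $L(A\otimes\chi,s)$ to $L(A\otimes\chi^{-1},2-s)$, so $\varepsilon=-1$ forces nothing at $s=1$, and the pair $\{\chi,\chi^{-1}\}$ always contributes an \emph{even} (possibly zero) order of vanishing. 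Forced vanishing is available only for self-dual blocks, and this is the actual role of the hypothesis $d=p^f+1$: since $p^f\equiv-1\pmod d$, each Frobenius orbit of characters is closed under $\chi\mapsto\chi^{-1}$, so each orbit block of the $L$-function is self-dual and has a sign. Your stated reason for needing $d=p^f+1$ --- that it ``pins down the break decomposition of the wild inertia representation of $\chi$'' --- is a misconception: $\chi$ has order dividing $d$, which is prime to $p$, so $\chi$ is tamely ramified (at $0$ and $\infty$ only) and has no wild part and no breaks. All the wildness is in $A$.

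Even granting the restriction to self-dual orbit blocks, your route through Laumon's product formula leaves the essential point --- the local $\varepsilon$-factors at $0$ and $\infty$, where $A$ may be wildly ramified --- as a black box, and the claim that these signs depend only on $\Swan_v(A)\bmod 2$ is exactly what would need proof. The argument in the cited source avoids $\varepsilon$-factors entirely. By Grothendieck--Ogg--Shafarevich, for all but a bounded number of $\chi$ (the bound is where $c$ comes from) the $\chi$-isotypic piece of $H^1$ has dimension $\deg(\n')+\Swan_0(A)+\Swan_\infty(A)+2\cdot(2\dim A)$: the Kummer twist is tame, so it leaves the Swan conductors unchanged and kills the inertia invariants at $0$ and $\infty$. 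This dimension is odd precisely under the stated parity hypothesis. The cup product on $H^1$ combined with the symplectic autoduality of $V_\ell A$ equips each self-dual orbit block with a nondegenerate \emph{symmetric} pairing for which $q^{-1}\Fr_q$ is an isometry, and linear algebra on an odd-dimensional orthogonal space (an isometry has eigenvalue $\pm1$, pinned to $+1$ by a determinant computation) yields the eigenvalue $q$ of $\Fr_q$, i.e., the zero of the block at $s=1$. To repair your write-up you must (i) work only with the self-dual orbit blocks, which is where $d=p^f+1$ genuinely enters, and (ii) either carry out the wild local sign computation honestly or, better, replace it with the conductor-parity argument just described.
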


This is a slight variant of \cite[Theorem~4.7]{Ulmer07b} applied to
the representation of $G_K$ on $V_\ell A$.  Note that $o_d(q)\le2f$ so
the rank tends to infinity with $f$.

\section{Jacobians of large rank}
Theorem~\ref{thm:analytic-ranks} gives an abundant supply of abelian
varieties with large analytic rank.  Using Corollary~\ref{cor:dpc}, we
can use this to produce Jacobians of large algebraic rank.

\begin{thm}\label{thm:Jac-ranks}
Let $p$ be an odd prime number, $K=\Fp(t)$, and $K_d=\Fp(t^{1/d})$.
Choose a positive integer $g$ such that $p\nodiv(2g+2)(2g+1)$ and let
$X$ be the hyperelliptic curve over $K$ defined by
$$y^2=x^{2g+2}+x^{2g+1}+t.$$
Let $J_X$ be the Jacobian of $X$, an abelian variety of dimension $g$
over $K$.  Then for all $d$ the $BSD$ conjecture holds for $J_X$ over
$K_d$, and there is a constant depending only on $p$ and $g$ such that
for all $d$ of the form $d=p^f+1$ we have
$$\rk J_X(K_d)\ge\frac{p^f+1}{2f}-c.$$
\end{thm}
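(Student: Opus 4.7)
\medskip

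\noindent\textbf{Proof proposal.}
The plan is to derive the bound on $\rk J_X(K_d)$ by combining the analytic rank lower bound of Theorem~\ref{thm:analytic-ranks}, applied to $A = J_X$, with BSD for $J_X$ over each $K_d$. The latter will be obtained from the Tate conjecture $T_1$ for the associated surface via Theorem~\ref{thm:main-comparison}.

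\smallskip

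\noindent\textbf{Step 1 (parity and analytic rank).}
First I would verify the hypothesis of Theorem~\ref{thm:analytic-ranks} for $J_X/K$, namely that $\deg(\n') + \Swan_0(J_X) + \Swan_\infty(J_X)$ is odd. Since $p\nmid(2g+2)(2g+1)$, the $x$-discriminant of $f(x,t) = x^{2g+2} + x^{2g+1} + t$ is a nonzero separable polynomial in $t$ whose finite roots are places of (essentially) multiplicative reduction contributing tamely and with trivial Swan conductor. The places $t=0$ and $t=\infty$ require a local analysis: at infinity, the substitution $x=1/X$, $y=Y/X^{g+1}$ gives $Y^2 = 1 + X + tX^{2g+2}$, while at $t=0$ one resolves the singularity of $y^2=x^{2g+1}(x+1)$ by blowup. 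Direct computation of the conductor exponents and Swan numbers at these two places should show the total parity is odd. Theorem~\ref{thm:analytic-ranks} then yields
$$
\ord_{s=1}L(J_X/K_d,s) \ge \frac{p^f+1}{o_d(q)} - c'
$$
for $d = p^f+1$, with $c'$ depending only on $p$ and $g$.

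\smallskip

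\noindent\textbf{Step 2 (BSD via $T_1$ and a product of curves).}
For each $d$ coprime to $p$, I would prove BSD for $J_X/K_d$ by establishing $T_1(\XX_d,\ell)$ for some $\ell\ne p$, where $\XX_d\to\P^1$ is the minimal regular model of $X\times_K K_d$; by Theorem~\ref{thm:main-comparison} this suffices. Setting $s=t^{1/d}$, the generic fiber has affine equation
$$
y^2 = x^{2g+2} + x^{2g+1} + s^d,
$$
which involves only four monomials and thus defines a Delsarte surface in the sense of Shioda. I would invoke Shioda's theorem that every Delsarte surface is dominated by a Fermat surface $F_N$ of degree a suitable $N$ (a multiple of $\lcm(2,2g+1,2g+2,d)$), together with the classical fact that each Fermat surface is dominated by a product of two Fermat curves. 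Composing yields a dominant rational map from a product of curves to $\XX_d$, defined over a finite extension $\F_{p^k}$ of $\F_p$ containing enough roots of unity. Corollary~\ref{cor:dpc} then gives $T_1(\XX_d\otimes\F_{p^k},\ell)$; since $\NS$ is finitely generated, taking $\gal(\F_{p^k}/\F_p)$-invariants upgrades this to $T_1(\XX_d,\ell)$ over $\F_p$, and Theorem~\ref{thm:main-comparison} delivers BSD.

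\smallskip

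\noindent\textbf{Step 3 (conclusion and obstacle).}
Combining Steps 1 and 2, $\rk J_X(K_d) = \ord_{s=1}L(J_X/K_d,s) \ge (p^f+1)/o_d(q) - c'$ for $d=p^f+1$. Since $p^f\equiv -1\pmod d$, we have $o_d(q)=o_d(p)\mid 2f$, so $(p^f+1)/o_d(q)\ge (p^f+1)/(2f)$, yielding the claimed inequality with $c = c'$. The main obstacle is Step 2: one must make Shioda's Delsarte-to-Fermat correspondence explicit enough to descend, up to harmless constant-field extension, to the finite base field, and one must ensure the domination is compatible with the minimal regular model $\XX_d$ rather than just the singular affine Delsarte model. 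The latter compatibility is automatic from the functoriality of ``dominated by a product of curves'' under birational maps, but must be spelled out; the parity check in Step 1 is concrete but nontrivial and is the other place where careful bookkeeping is required.
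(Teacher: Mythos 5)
Your proposal follows essentially the same route as the paper's own sketch: the analytic rank lower bound comes from Theorem~\ref{thm:analytic-ranks} (after checking the parity condition), and $BSD$ over each $K_d$ comes from the fact that the four-monomial (Delsarte) equation makes the associated surface dominated by a product of curves, so that Corollary~\ref{cor:dpc} and Theorem~\ref{thm:main-comparison} apply. The details you flag as remaining (the conductor/Swan parity computation and the explicit Delsarte-to-Fermat domination) are exactly the ones the paper defers to the cited reference, so your outline is a faithful reconstruction of the intended argument.
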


This is one of the main results of \cite{Ulmer07b}.  We sketch the key
steps in the proof.  The analytic rank of $J_X$ is computed using
Theorem~\ref{thm:analytic-ranks}.  Because $X$ is defined by an
equation involving 4 monomials in 3 variables, by Shioda's
construction it turns out that the surface $\XX\to\P^1$ associated to
$X/K_d$ is dominated by a product of curves.  Therefore $T_2$ holds
for $\XX$ and $BSD$ holds for $J_X$.  In \cite{Ulmer07b} we also
checked that $J_X$ is absolutely simple and has trivial $K/\Fq$
trace for all $d$.

In \cite{Ulmer07b} we gave other examples so that for every $p$ and
every $g>0$, there is an absolutely simple Jacobian of dimension $g$
over $\Fp(t)$ which satisfies $BSD$ and has arbitrarily large rank in
the tower of fields $K_d$.

\section{Berger's construction}
The results of the preceding sections how that Shioda's 4-monomial
construction is a powerful tool for deducing $BSD$ for certain specific
Jacobians.  However, it has the defect that it is rigid---the property
of being dominated by a Fermat surface, or more generally by a product
of Fermat curves, does not deform.

In her thesis, Berger gave a more flexible construction which leads to
{\it families\/} of Jacobians satisfying $BSD$ in each layer of the
Kummer tower.  We quickly sketch the main idea; see \cite{Berger08}
and \cite{UlmerDPCT} for more details.

Let $k$ be a general field and fix two smooth, proper, geometrically
irreducible curves $\CC$ and $\DD$ over
$k$.  Fix also two separable, non-constant rational functions
$f\in k(\CC)^\times$ and $g\in k(\DD)^\times$ and form the rational
map
$$\CC\times\DD\ratto\P^1\qquad(x,y)\mapsto f(x)/g(y).$$
Under mild hypotheses on $f$ and $g$, the generic fiber of this map
has a smooth, proper model $X\to\spec K=k(t)$.

Berger's construction is designed so that the following is true.

\begin{thm}[Berger] With notation as above, for all $d$ prime to
  $p=\ch(k)$, the surface $\XX_d\to\P^1$ associated to $X$ over
  $K_d=k(t^{1/d})\cong k(u)$ is dominated by a product of curves.
  Thus if $k$ is finite, the $BSD$ conjecture holds for $J_X$ over $K_d$.
\end{thm}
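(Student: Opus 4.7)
The plan is to write down an explicit product of curves dominating $\XX_d$ and then invoke Corollary~\ref{cor:dpc} together with Theorem~\ref{thm:main-comparison}. First I would form the cyclic degree-$d$ covers $\CC'_d\to\CC$ and $\DD'_d\to\DD$ whose function fields are $k(\CC)(v)$ with $v^d=f$ and $k(\DD)(w)$ with $w^d=g$ respectively. Since $d$ is prime to $p=\ch(k)$ and the functions $f,g$ are separable, these are separable Kummer extensions, and the mild hypotheses on $f$ and $g$ (together, if needed, with a harmless finite extension of $k$ to adjoin a $d$-th root of unity) ensure that $\CC'_d$ and $\DD'_d$ are smooth, projective, geometrically irreducible curves over $k$.

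Next I would consider the product $\CC'_d\times_k\DD'_d$ together with the rational function $u:=v/w$. It satisfies $u^d=f(x)/g(y)$, so the rational map $\CC'_d\times_k\DD'_d\ratto\P^1_u$ sending $(x,y)\mapsto u$ is dominant. The function field $k(\CC)(v)\otimes_k k(\DD)(w)$ of the source contains the subfield $k(\CC,\DD)(u)$, and this subfield is precisely the function field of the base change $X\times_K K_d$: indeed, $X/K$ has function field $k(\CC,\DD)$ with $t$ identified with $f/g$, and $K_d=k(u)$ with $u^d=t$, so $X_{K_d}$ has function field $k(\CC,\DD)\otimes_{k(t)}k(u)=k(\CC,\DD)(u)$ with $u^d=f/g$. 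Thus there is a dominant rational map of $k$-surfaces $\CC'_d\times_k\DD'_d\ratto\XX_d$ (passing to minimal regular models does not affect birational domination).

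With this domination in hand, Corollary~\ref{cor:dpc} immediately gives $T_1(\XX_d,\ell)$ for every prime $\ell\neq p$. Since $k$ is a finite (hence perfect) field, $\XX_d$ is automatically smooth over $k$, so the hypotheses of Theorem~\ref{thm:main-comparison} apply, and the finiteness of $\Br(\XX_d)[\ell^\infty]$ implied by $T_1(\XX_d,\ell)$ promotes via that theorem to finiteness of $\sha(J_X/K_d)$ and the full refined $BSD$ conjecture for $J_X$ over $K_d$.

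The main obstacle, in my view, is not any deep step but rather the careful bookkeeping needed to verify the function-field inclusion $k(\CC,\DD)(u)\hookrightarrow k(\CC)(v)\otimes_k k(\DD)(w)$ together with the geometric irreducibility of $\CC'_d$ and $\DD'_d$ uniformly in $d$; the latter amounts to showing that $f$ (resp.\ $g$) is not an $\ell$-th power in $k(\CC)$ (resp.\ $k(\DD)$) for any prime $\ell\mid d$, which is the content of the ``mild hypotheses'' referenced in the statement. Once that is arranged, the conclusion follows at once from Corollary~\ref{cor:dpc} and Theorem~\ref{thm:main-comparison}.
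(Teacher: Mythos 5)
Your proof is correct and follows the same route the paper takes (via the references \cite{Berger08} and \cite{UlmerDPCT}, and the key geometric fact the paper records in the very next section, namely that $\XX_d$ is birational to the quotient $(\CC_d\times\DD_d)/\mu_d$, whence is dominated by $\CC_d\times\DD_d$). Your identification of $k(\XX_d)$ with $k(\CC\times\DD)(u)$, $u^d=f/g$, embedded in $k(\CC_d\times\DD_d)$ via $u=v/w$, is exactly the domination the paper has in mind, and the downstream chain Corollary~\ref{cor:dpc} $\Rightarrow T_1(\XX_d,\ell)$ $\Rightarrow$ finiteness of $\Br(\XX_d)[\ell^\infty]$ $\Rightarrow$ (Theorem~\ref{thm:main-comparison}) $rBSD(J_X/K_d)$ is the intended one.

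Two small remarks. First, adjoining $\mu_d$ to $k$ is not actually needed for the domination itself — $\CC_d$, $\DD_d$, and the map $\CC_d\times\DD_d\ratto\XX_d$ are all defined over $k$ — though it is harmless, and the relevant finiteness statements descend from a finite extension of $k$. Second, the uniform (in $d$) geometric irreducibility of $\CC_d$ and $\DD_d$ is precisely what Berger's ``mild hypotheses'' on $f$ and $g$ are arranged to guarantee (e.g., that the divisors of $f$ and $g$ are not multiples of nontrivial divisors), so it is correct to treat that as given rather than re-derive it.
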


It is convenient to think of the data in Berger's construction as
consisting of a discrete part, namely the genera of $\CC$ and $\DD$
and the combinatorial type of the divisors of $f$ and $g$, and a
continuous part, namely the moduli of the curves and the locations of
the zeros and poles of the functions.  From this point of view it is
clear that there is enormous flexibility in the choice of data.  In
\cite{Berger08}, Berger used this flexibility to produce families of
elliptic curves over $\Fp(t)$ such that for almost all specializations
of the parameters to values in $\Fq$, the resulting elliptic curve
over $\Fq(t)$ satisfies $BSD$ and has unbounded rank in the tower
$K_d=\Fq(t^{1/d})$.

Note that the values of $d$ which give high ranks (via
Theorem~\ref{thm:analytic-ranks}) are those of the form $p^f+1$ and
there is a well-known connection between such values and
supersingularity of abelian varieties over extensions of $\Fp$.  Using
Berger's construction and ideas related to those in the next section,
Occhipinti produced elliptic curves over $\Fp(t)$ which have high rank
at {\it every\/} layer of the tower $\Fpbar(t^{1/d})$.  More
precisely, he finds curves $E/\Fp(t)$ such that $\rk
E(\Fpbar(t^{1/d})\ge d$ for all $d$ prime to $p$.  This seems to be a
completely new phenomenon.  See \cite{OcchipintiThesis} and the paper
based on it for details.

\section{Geometry of Berger's construction}
In \cite{UlmerDPCT}, we made a study of the geometry of Berger's
construction with a view toward more explicit results on ranks and
Mordell-Weil groups.  

To state one of the main results, let $\CC$, $\DD$, $f$, and $g$ be as
in Berger's construction.  Define covers $\CC_d\to\CC$ and
$\DD_d\to\DD$ by the equations
$$z^d=f(x)\qquad w^d=g(x).$$
We have an action of $\mu_d$ on $\CC_d$ and $\DD_d$ and their
Jacobians.  It turns out that the surface $\XX_d\to\P^1$ associated to
$X$ over $K_d=k(t^{1/d})$ is birational to the quotient of
$\CC_d\times\DD_d$ by the diagonal action of $\mu_d$.  This explicit
domination by a product of curves makes it possible to compute the
rank of $J_X$ over $K_d$ in terms of invariants of $\CC_d$ and
$\DD_d$.  More precisely,

\begin{thm}\label{thm:DPCT}
  Suppose that $k$ is algebraically closed.  Then there exist positive
  integers $c_1$, $c_2$, and $N$ such that for all $d$ relatively
  prime to $N$ we have
$$\rk \MW( J_X/K_d)=\rk\Hom_{k-av}(J_{\CC_d},J_{\DD_d})^{\mu_d}-dc_1+c_2.$$
Here the superscript indicates those homomorphisms which commute
with the action of $\mu_d$ on the two Jacobians.
\end{thm}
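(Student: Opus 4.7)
The plan is to combine the Shioda-Tate theorem (Corollary~\ref{cor:STcor}) with the explicit birational domination $\XX_d \ratto (\CC_d \times \DD_d)/\mu_d$ noted just before the theorem.

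First, I would apply Corollary~\ref{cor:STcor} to $\pi : \XX_d \to \P^1_{K_d}$ to obtain
$$\rk \MW(J_X/K_d) = \rk \NS(\XX_d) - 2 - \sum_v (f_v - 1),$$
which reduces the problem to computing $\rk \NS(\XX_d)$ and the component-counting term as functions of $d$. For the latter, the bad places of $X/K$ at $t = 0,\infty$ are totally ramified in the Kummer extension $K_d/K$ and contribute a bounded amount to $\sum_v(f_v - 1)$, while each bad place of $X/K$ at $t = \alpha \in k^\times$ (finitely many, corresponding to critical values of $f/g$) splits into exactly $d$ places of $K_d$ because $k$ is algebraically closed, and each inherits the same fiber type as the original via unramified base change. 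Hence $\sum_v(f_v - 1) = dc_1 + (\text{const})$ for a positive integer $c_1$ counting the excess components across the bad fibers of $X/K$ away from $0,\infty$, once $d$ is coprime to a fixed $N$ encoding the multiplicities.

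Second, to compute $\rk \NS(\XX_d)$, I would pass to the smooth model $\YY_d$ obtained by resolving the quotient singularities of $(\CC_d \times \DD_d)/\mu_d$. Both $\XX_d$ and $\YY_d$ are smooth projective surfaces over $k$ birational to $X_{K_d}$, so they differ by blowups/blowdowns whose combinatorics depends only on the ramification data of $\CC_d/\CC$ and $\DD_d/\DD$, which stabilizes for $d$ coprime to $N$; therefore $\rk \NS(\XX_d) - \rk \NS(\YY_d)$ is eventually constant in $d$. For $\YY_d$ itself, $\rk \NS(\YY_d) = \dim_\Q \bigl(\NS(\CC_d \times \DD_d)\tensor\Q\bigr)^{\mu_d} + e_d$, where $e_d$ counts exceptional components above the singular points of the quotient. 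These points are images of $\mu_d$-fixed points on $\CC_d \times \DD_d$, namely products of ramification points of $\CC_d/\CC$ and $\DD_d/\DD$ (lying above the zeros and poles of $f$ and $g$); their number is independent of $d$, and each yields a cyclic quotient singularity of type $(1/d)(1,1)$ whose Hirzebruch–Jung resolution contributes a bounded number of exceptional components, so $e_d$ is bounded uniformly for $d$ coprime to $N$. Combining this with the classical decomposition
$$\NS(\CC_d \times \DD_d) \tensor \Q \cong \Q \cdot F_1 \oplus \Q \cdot F_2 \oplus \Hom_{k-av}(J_{\CC_d}, J_{\DD_d}) \tensor \Q,$$
and noting that the diagonal $\mu_d$-action fixes both fiber classes $F_i$ while acting on $\Hom$ by conjugation, one obtains
$$\bigl(\NS(\CC_d \times \DD_d) \tensor \Q\bigr)^{\mu_d} \cong \Q^2 \oplus \Hom_{k-av}(J_{\CC_d}, J_{\DD_d})^{\mu_d} \tensor \Q.$$
Assembling the three steps and absorbing all bounded contributions into $c_2$ gives the stated formula, with the $-dc_1$ arising entirely from $\sum_v(f_v - 1)$.

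The main obstacle is the careful bookkeeping of three independent sources of correction---the reducible fibers of $\XX_d \to \P^1_{K_d}$, the exceptional divisors $e_d$ resolving the quotient singularities of $\YY_d$, and the $(-1)$-curves contracted when comparing $\XX_d$ with $\YY_d$---and verifying that after collecting terms only the linear-in-$d$ contribution from bad fibers survives, with all other corrections bounded uniformly for $d$ coprime to $N$. The coprimality hypothesis $(d,N) = 1$ rules out sporadic coincidences (extra ramification, new reducible fibers, or atypical resolution types arising when $d$ shares a factor with a multiplicity in the original bad-fiber or ramification data), ensuring the bounded quantities genuinely stabilize on this set of $d$. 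Positivity of $c_1$ requires at least one reducible bad fiber of $X/K$ away from $t=0,\infty$, which is a generic feature of the Berger construction and follows from the structure of the critical locus of $f/g$ on $\CC\times\DD$.
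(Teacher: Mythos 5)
Your overall strategy---Shioda--Tate applied to $\XX_d\to\P^1_{K_d}$, combined with the explicit birational domination by $(\CC_d\times\DD_d)/\mu_d$ and the classical decomposition $\NS(C\times D)\tensor\Q\cong\Q^2\oplus\Hom(J_C,J_D)\tensor\Q$---is the natural one and is surely close to what the cited proof does. The paper itself gives no proof here (it defers to \cite[Thm.~6.4]{UlmerDPCT}), so I am comparing your proposal against what the claims actually require, and two of your intermediate assertions are false; they happen to cancel, but that cancellation is the content of the argument and your write-up misses it.

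The quotient singularities of $(\CC_d\times\DD_d)/\mu_d$ are \emph{not} all of type $(1/d)(1,1)$. At a ramification point of $\CC_d\to\CC$ above a \emph{zero} of $f$ the $\mu_d$-action on the local parameter $z$ (with $z^d=f$) has weight $+1$; above a \emph{pole} of $f$ the local parameter is $1/z$ and the weight is $-1$. Hence a fixed point $(P,Q)$ with both coordinates of the same type (zero-zero or pole-pole) gives $(1/d)(1,1)$, a single $(-d)$-curve in the resolution, but a fixed point of mixed type gives $(1/d)(1,-1)=A_{d-1}$, whose resolution is a chain of $d-1$ exceptional curves. So $e_d$ grows \emph{linearly} in $d$, contradicting your claimed uniform bound. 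Simultaneously, the assertion that the fibers of $\XX_d\to\P^1_{K_d}$ over the ramified places $u=0,\infty$ contribute only a bounded amount to $\sum_v(f_v-1)$ is also false: ramified base change multiplies the number of fiber components (the elliptic-surface archetype is $I_n\mapsto I_{nd}$), and here those extra components are exactly the $A_{d-1}$ exceptional chains sitting over $t=0$ and $t=\infty$ (the mixed fixed points are precisely those with well-defined value $t\in\{0,\infty\}$; the $(1/d)(1,1)$ ones are indeterminacy points, and their single exceptional curve is horizontal). The two linear-in-$d$ terms cancel in $\rk\NS-\sum_v(f_v-1)$, leaving the $-dc_1$ coming only from the unramified split bad places as you claim, but you must actually exhibit this cancellation---as written, both of your bounding claims are wrong, and the comparison of $\rk\NS(\XX_d)$ with $\rk\NS(\YY_d)$ via blowups is likewise asserted rather than controlled. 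The proof needs to track components of the fibers at $0,\infty$ and the exceptional loci together, not treat them as separately bounded.
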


The constants $c_1$ and $c_2$ are given explicitly in terms of the
input data, and the integer $N$ is there just to make the statement
simple---there is a formula for the rank for any value of $d$.  See
\cite[Thm.~6.4]{UlmerDPCT} for the details.

The theorem relates ranks of abelian varieties over a function field
$K_d$ to homomorphisms of abelian varieties over the constant field
$k$.  The latter is sometimes more tractable.  For example, when
$k=\Fpbar$, the homomorphism groups can be made explicit via
Honda-Tate theory.

Here is another example where the endomorphism side of the formula in
Theorem~\ref{thm:DPCT} is tractable: Using it and Zarhin's results on
endomorphism rings of superelliptic Jacobians, we gave examples in
\cite{UlmerZarhin10} of Jacobians over function fields of
characteristic zero with bounded ranks in certain towers.  More
precisely:

\begin{thm}\label{thm:UZ}
Let $g_X$ be an integer $\ge2$ and let $X$ be the smooth, proper curve
of genus $g_X$ over $\Q(t)$ with affine plane model
$$ty^2=x^{2g_X+1}-x+t-1.$$
Then for every prime number $p$ and every integer $n\ge0$, we have
$$\rk J_X(\Qbar(t^{1/p^n}))=2g_X.$$
\end{thm}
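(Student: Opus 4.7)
The plan is to apply Theorem~\ref{thm:DPCT} in conjunction with Zarhin's theorems on endomorphism algebras of superelliptic Jacobians. I first rewrite $ty^2=x^{2g_X+1}-x+t-1$ as $t(y^2-1)=x^{2g_X+1}-x-1$, which exhibits $X$ as (the smooth proper model of) the generic fiber of the rational map $\P^1\times\P^1\ratto\P^1$ given by $(x,y)\mapsto f(x)/g(y)$ with $f(x)=x^{2g_X+1}-x-1$ and $g(y)=y^2-1$. This places $X$ in Berger's framework with $\CC=\DD=\P^1$, and for $d=p^n$ the associated degree-$d$ cyclic covers are $\CC_d\colon z^d=f(x)$ (a superelliptic curve of genus $g_X(d-1)$ whenever $\gcd(d,2g_X+1)=1$) and $\DD_d\colon w^d=g(y)$, each carrying a natural $\mu_d$-action.

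Next, I apply Theorem~\ref{thm:DPCT} with $k=\Qbar$. The proof of that theorem is purely geometric and goes through unchanged in characteristic zero as long as $d$ is invertible, so this is legitimate; any finite set of excluded primes (from the ``$d$ coprime to $N$'' clause) can be handled by a separate direct argument exploiting the good reduction of $X$. One obtains explicit constants $c_1,c_2$ and the identity
\begin{equation*}
\rk J_X(\Qbar(t^{1/d})) = \rk\Hom_{\Qbar}(J_{\CC_d},J_{\DD_d})^{\mu_d} - dc_1 + c_2,
\end{equation*}
so the problem reduces to pinning down the rank of the $\mu_d$-equivariant Hom group between these two superelliptic Jacobians as a function of $d=p^n$.

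The third and essential step is to show that this Hom group has rank exactly $dc_1-c_2+2g_X$. Assuming the Galois group of $f(x)=x^{2g_X+1}-x-1$ over $\Q$ is the full symmetric group $S_{2g_X+1}$---a hypothesis I would verify by an explicit reduction-mod-$\ell$ argument for a prime of good reduction at which $f$ has, say, a cycle of length $2g_X+1$---Zarhin's theorems describe $J_{\CC_d}$, up to $\Qbar$-isogeny, as a product of absolutely simple CM abelian varieties indexed by $\gal(\Qbar/\Q)$-orbits of nontrivial characters of $\mu_d$, each with endomorphism algebra the cyclotomic field $\Q(\zeta_{\ord(\chi)})$. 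By contrast, $\DD_d$ is only a two-point cover, so $J_{\DD_d}$ is isogenous to a much smaller product of classical Jacobi-sum CM factors. Decomposing the Hom group character-by-character and using Zarhin's simplicity statement to forbid accidental isogenies between a simple factor of $J_{\CC_d}$ and one of $J_{\DD_d}$, only the ``expected'' contributions survive: those coming from the common $\P^1$ base, which account for the linear-in-$d$ term $dc_1-c_2$, together with a fixed rank-$2g_X$ piece arising from the hyperelliptic geometry of $X$ itself.

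The main obstacle is this third step: Zarhin's theorems control $\End(J_{\CC_d})$ precisely but only bound $\Hom(J_{\CC_d},J_{\DD_d})$ from below, so one must actively rule out accidental isogenies. The cleanest route is to compare the Hecke characters (Jacobi sums) attached to the simple CM components of each side and observe that, under the $S_{2g_X+1}$ hypothesis on $f$, they coincide only on the diagonal forced by the Berger geometry. Verifying the Galois group of $f$ is a routine separate calculation, and once the main character-by-character comparison is done the final identification of the constants $c_1,c_2$ producing the clean answer $2g_X$ is a direct bookkeeping exercise from the ramification data of $f$ and $g$.
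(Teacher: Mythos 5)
Your overall strategy---identify $X$ as a Berger generic fiber with $\CC=\DD=\P^1$, $f(x)=x^{2g_X+1}-x-1$, $g(y)=y^2-1$, apply Theorem~\ref{thm:DPCT}, and invoke Zarhin's work on superelliptic Jacobians---is exactly the route taken in \cite{UlmerZarhin10}, and the initial algebraic manipulation exhibiting the Berger structure is correct. But the heart of the argument, the third step, is described incorrectly in a way that inverts the actual mechanism.

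You assert that Zarhin's theorems present $J_{\CC_d}$ (up to isogeny) as a product of \emph{absolutely simple CM abelian varieties} with endomorphism algebras $\Q(\zeta_{\ord\chi})$, and you then propose to rule out isogenies between factors of $J_{\CC_d}$ and factors of $J_{\DD_d}$ by comparing Jacobi-sum Hecke characters. This is backwards. For $z^q=f(x)$ with $\deg f=2g_X+1\ge5$ and $\gal(f)=S_{2g_X+1}$, the relevant Zarhin theorems say that the ``new'' part attached to a primitive character of order $q$ is absolutely simple with endomorphism algebra \emph{exactly} $\Q(\zeta_q)$; since that new part has dimension roughly $g_X\,\varphi(q)$ while $[\Q(\zeta_q):\Q]=\varphi(q)$, it is emphatically \emph{not} of CM type once $g_X\ge2$. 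By contrast $\DD_d:w^d=y^2-1$ has genus $(d-1)/2$ (for $d$ odd), so $J_{\DD_d}$ genuinely \emph{is} a CM abelian variety. The actual argument for $\Hom_{\Qbar}(J_{\CC_d},J_{\DD_d})^{\mu_d}=0$ is therefore an endomorphism-algebra obstruction, not a Hecke-character comparison: any nonzero homomorphism would factor through a simple factor of $J_{\DD_d}$, exhibiting an isogeny from a non-CM simple abelian variety to a CM one, which is impossible. Your proposed comparison of Jacobi sums would not even be well posed for the non-CM factors of $J_{\CC_d}$. Relatedly, your remark that Zarhin's results ``only bound $\Hom(J_{\CC_d},J_{\DD_d})$ from below'' is wrong---the simplicity/non-CM statement is precisely what furnishes the \emph{upper} bound $\Hom=0$, after which the formula in Theorem~\ref{thm:DPCT} forces $c_1=0$ and $c_2=2g_X$.

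Two smaller points. First, your treatment of the ``$d$ coprime to $N$'' hypothesis is too glib: for a prime $p\mid N$ you would be discarding all $d=p^n$, not a finite set, and the ``direct argument exploiting good reduction'' is not spelled out. The paper itself notes (in the remark following Theorem~\ref{thm:DPCT}) that the integer $N$ is only there to simplify the statement and that \cite{UlmerDPCT} gives a rank formula valid for every $d$; you should invoke that rather than promise an unspecified workaround. Second, a reduction mod $\ell$ producing a single $(2g_X+1)$-cycle does not force $\gal(f)=S_{2g_X+1}$; the correct input is Selmer's irreducibility of $x^n-x-1$ together with Osada's theorem that trinomials of this shape have full symmetric Galois group, and one must also check this persists over the cyclotomic extensions that occur in the $\mu_d$-equivariant bookkeeping.
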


Another dividend of the geometric analysis is that it can sometimes
be used to find explicit points in high rank situations.  (In principle
this is also true in the context of Shioda's 4-monomial construction,
but I know of few cases where it has been successfully carried out, a
notable exception being \cite{Shioda91} which relates to an isotrivial
elliptic curve.)

The following example is \cite[Thm.~8.1]{UlmerDPCT}.  The point which
appear here are related to the graphs of Frobenius endomorphisms of
the curves $\CC_d$ and $\DD_d$ for a suitable choice of data in
Berger's construction.

\begin{thm}\label{thm:dpct-explicit}
  Fix an odd prime number $p$, let $k=\Fpbar$, and let $K=k(t)$.  Let $X$
  be the elliptic curve over $K$ defined by
$$y^2+xy+ty=x^3+tx^2.$$
For each $d$ prime to $p$, let $K_d=k(t^{1/d})\cong k(u)$.  We write
$\zeta_d$ for a fixed primitive $d$-th root of unity in $k$.
Let  $d=p^n+1$, let $q=p^n$, and let
$$P(u)=\left(\frac{u^q(u^q-u)}{(1+4u)^q},
  \frac{u^{2q}(1+2u+2u^q)}{2(1+4u)^{(3q-1)/2}}
  -\frac{u^{2q}}{2(1+4u)^{q-1}}\right).$$ 
Then the points $P_i=P(\zeta_d^iu)$ for $i=0,\dots,d-1$ lie in
$X(K_d)$ and they generate a finite index subgroup of $X(K_d)$, which
has rank $d-2$.  The relations among them are that
$\sum_{i=0}^{d-1}P_i$ and $\sum_{i=0}^{d-1}(-1)^iP_i$ are torsion.
\end{thm}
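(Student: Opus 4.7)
The strategy has three ingredients: (i) realize $X/K$ as the generic fiber of a Berger-type map, so that Theorem~\ref{thm:DPCT} applies; (ii) identify the explicit points $P_i$ with classes coming from graphs of Frobenius-type morphisms between the cyclic covers $\CC_d,\DD_d$; and (iii) compute the resulting $\mu_d$-equivariant Hom-module over $\Fpbar$ via Honda--Tate theory when $d=p^n+1$. First I would exhibit curves $\CC,\DD$ over $k$ and non-constant functions $f\in k(\CC)^\times$, $g\in k(\DD)^\times$ such that the generic fiber of $(x,y)\mapsto f(x)/g(y)\colon\CC\times\DD\ratto\P^1$ is isomorphic to $X\to\spec K$; the Weierstrass equation $y^2+xy+ty=x^3+tx^2$ admits such a presentation after an elementary rearrangement of the form $t=f(x)/g(y)$, which determines the combinatorial data that fix the constants $c_1,c_2$ appearing below.

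\textbf{Rank formula and membership.} With the Berger presentation in hand, Theorem~\ref{thm:DPCT} yields
\[
\rk\MW(J_X/K_d)=\rk\Hom_{k-av}(J_{\CC_d},J_{\DD_d})^{\mu_d}-dc_1+c_2.
\]
That $P_i\in X(K_d)$ for each $i$ follows from direct substitution of the formula for $P(u)$ into the Weierstrass equation with $t=u^d$, combined with the Galois action of $\mu_d=\gal(K_d/K)$ given by $u\mapsto\zeta_du$; this verification is routine and reduces the theorem to computing the right-hand side of the displayed formula and identifying explicit divisor classes on $\XX_d$ that realize its generators.

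\textbf{Honda--Tate computation.} When $d=p^n+1$ one has $q\equiv-1\pmod d$, so for each non-trivial character $\chi$ of $\mu_d$ the $q$-power Frobenius acts on the $\chi$-isotypic piece of $J_{\CC_d}$ (and of $J_{\DD_d}$) with eigenvalue $\alpha$ satisfying $\alpha\bar\alpha=q$ and $\alpha/\bar\alpha$ of order dividing $2d$. Honda--Tate theory over $\Fpbar$ then forces each such isotypic factor to be supersingular, and $\Hom_{k-av}(J_{\CC_d},J_{\DD_d})^{\mu_d}$ decomposes as a sum over characters of matrix algebras over a maximal order in the quaternion algebra $B_{p,\infty}$. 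An explicit accounting of ranks across these isotypic components---after subtracting the contribution $dc_1-c_2$ coming from the trivial character together with the $\NS$-classes pulled back from $\CC\times\DD$---should yield exactly $d-2$.

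\textbf{Explicit generators, relations, and main obstacle.} To match this rank with the explicit points, I would push forward the graph of the $q$-power Frobenius $\CC_d\to\DD_d$, twisted by the various $\zeta_d^i\in\mu_d$, through the quotient $(\CC_d\times\DD_d)/\mu_d\ratto\XX_d$ and restrict to the generic fiber; this should reproduce the rational functions $P_i=P(\zeta_d^iu)$ in the statement. The two torsion relations then correspond to the two $\mu_d$-characters whose Hom-components are absorbed into $dc_1-c_2$: the trivial character yields $\sum_iP_i$, and the unique order-two character (which exists precisely because $p$ odd forces $d$ even) yields $\sum_i(-1)^iP_i$. The main obstacle is this matching step: one must choose explicit parametrizations of $\CC_d$ and $\DD_d$, compute the $q$-power Frobenius between them in those coordinates, and trace the resulting graph class through the birational map to $\XX_d$ to recover the particular rational function $P(u)$ given in the statement. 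Once the matching is completed, the finite-index property, the exact rank $d-2$, and the precise list of relations all drop out of the Honda--Tate rank computation.
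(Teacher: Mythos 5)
Your strategy is the same one the paper points to: the paper gives no proof of this theorem, citing \cite[Thm.~8.1]{UlmerDPCT} and remarking only that the points arise from graphs of Frobenius morphisms between the curves $\CC_d$ and $\DD_d$ for a suitable choice of data in Berger's construction. Your outline correctly assembles all of the ingredients of that route --- a Berger presentation $t=f(x)/g(y)$ of $X$, the rank formula of Theorem~\ref{thm:DPCT}, supersingularity of the isotypic pieces of $J_{\CC_d}$ and $J_{\DD_d}$ when $q\equiv-1\pmod d$, and the identification of the $P_i$ with pushed-forward graph classes --- and your observation that $d=p^n+1$ is even (so that the order-two character exists and accounts for the second relation) is consistent with the stated relations.

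That said, what you have written is a plan rather than a proof, and the gap you flag yourself is not a side issue but the entire content of the theorem. Verifying that $P(u)$ satisfies the Weierstrass equation is indeed routine, but it establishes only that $P_i\in X(K_d)$; the finite-index claim, the rank $d-2$, and the exact list of relations all hinge on (a) actually exhibiting $\CC$, $\DD$, $f$, $g$ with the stated generic fiber and computing $c_1$, $c_2$ for that data, (b) carrying out the Honda--Tate count of $\rk\Hom_{k-av}(J_{\CC_d},J_{\DD_d})^{\mu_d}$ for $d=p^n+1$ and checking it exceeds $dc_1-c_2$ by exactly $d-2$, and (c) tracing the graph of Frobenius through the birational map $(\CC_d\times\DD_d)/\mu_d\ratto\XX_d$ in explicit coordinates to see that it lands on the class of $P(u)$ --- without (c) you cannot conclude that the $P_i$ themselves span a finite-index subgroup, only that some subgroup of rank $d-2$ exists. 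None of these three computations is carried out or even set up concretely (no candidate $\CC$, $\DD$, $f$, $g$ is written down), so as it stands the argument does not establish the statement; it reduces it to the computations performed in \cite{UlmerDPCT}.
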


The main result of \cite{UlmerDPCT} also has some implications for
elliptic curves over $\C(t)$.  We refer the reader to the last section
of that paper for details.

\section{Artin-Schreier analogues}
The results of the preceding four sections are all related to the
arithmetic of Jacobians in the Kummer tower $K_d=k(t^{1/d})$.  It
turns out that all of the results---high analytic ranks, a
Berger-style construction of Jacobians satisfying $BSD$, a rank
formula, and explicit points---have analogues for the Artin-Schreier
tower, that is for extensions $k(u)/k(t)$ where $u^q-u=f(t)$.  See a
forthcoming paper with Rachel Pries \cite{PriesUlmerAS} for more
details.

\section{Explicit points on the Legendre curve}
As we mentioned before, if one can write down enough points to fill
out a finite index subgroup of the Mordell-Weil group of an abelian
variety, then the full $BSD$ conjecture follows.  This plays out in a
very satisfying way for the Legendre curve.

More precisely, let $p$ be an odd prime, let $K=\Fp(t)$, and let
$$K_d=\Fp(\mu_d)(t^{1/d})\cong\Fp(\mu_d)(u).$$  
Consider the Legendre curve
$$E:\qquad y^2=x(x+1)(x+t)$$
over $K$.  (The signs are not the traditional ones, and $E$ is a twist
of the usual Legendre curve.  It turns out that the + signs are more
convenient, somewhat analogous to the situation with signs of Gauss
sums.) 

If we take $d$ of the form $d=p^f+1$, then there is an obvious point
on $E(K_d)$, namely $P(u)=(u,u(u+1)^{d/2})$.  Translating this point
by $\gal(K_d/K)$ yields more points:  $P_i=P(\zeta_d^iu)$ where
$\zeta_d\in\Fp(\mu_d)$ is a primitive $d$-th root of unity and
$i=0,\dots,d-1$.

In \cite{UlmerLegendre} we give an elementary proof of the following.

\begin{thm}\label{thm:LP}
The points $P_i$ generate a subgroup of $E(K_d)$ of finite index and
rank $d-2$.  The relations among them are that $\sum_{i=0}^{d-1}P_i$
and $\sum_{i=0}^{d-1}(-1)^iP_i$ are torsion.
\end{thm}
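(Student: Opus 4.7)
The plan is elementary, following the strategy of Theorem~\ref{thm:dpct-explicit}: verify the points, identify the two torsion relations from Galois symmetry, and compute the canonical height pairing of $\{P_i\}$ to pin down the rank and the relation module.

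First I would verify that $P_i=P(\zeta_d^iu)$ lies on $E(K_d)$. The key identity is $(v+1)^d=(v^{p^f}+1)(v+1)$ in characteristic $p$, valid since $d=p^f+1$. Substituting $v=\zeta_d^iu$ and using $\zeta_d^{ip^f}=\zeta_d^{-i}$, both sides of $y^2=x(x+1)(x+t)$ at $(x,t)=(\zeta_d^iu,u^d)$ evaluate to $\zeta_d^{2i}u^2(1+\zeta_d^iu)(1+\zeta_d^{-i}u^{p^f})$.

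Next, the Galois group $\gal(K_d/K(\mu_d))\cong\mu_d$ acts on $\{P_i\}$ by cyclic permutation $P_i\mapsto P_{i+1}$, so $S^+:=\sum_iP_i$ descends to $E(K(\mu_d))$. Over $K(\mu_d)$ the Legendre curve has trivial $L$-function (the associated elliptic surface is rational, with bad fibers only at $t=0,1,\infty$ of total Euler characteristic $12$), hence rank zero, so $S^+$ is torsion. For $S^-:=\sum_i(-1)^iP_i$, the involution $u\mapsto\zeta_d^{d/2}u=-u$ of $K_d$ acts as $P_i\mapsto P_{i+d/2}$ and gives $\tau(S^-)=(-1)^{d/2}S^-$; combined with the cyclic descent, $S^-$ (or $2S^-$) lies in $E(L)$ for a suitable intermediate subfield $L\subset K_d$ over which the analogous rank-zero analysis applies, so $S^-$ is torsion.

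For the rank statement, the matrix $H=(\langle P_i,P_j\rangle)$ of N\'eron-Tate heights is circulant by Galois equivariance, with eigenvalues $\hat h(\chi)=\sum_i\chi(i)\langle P_0,P_i\rangle$ for characters $\chi\colon\Z/d\Z\to\Qbar^\times$. Positivity of the height pairing gives $\hat h(\chi)\ge 0$, with vanishing exactly when the corresponding character-combination of $\{P_i\}$ is torsion; the previous paragraph shows $\hat h(\mathbf{1})=\hat h(\sgn)=0$. To show these are the only vanishing eigenvalues, compute the local N\'eron heights $\langle P_0,P_i\rangle_v$ at the bad places of $E/K_d$ lying above $t=0,1,\infty$, whose Kodaira types are pulled back from the $I_n$ fibers of the Legendre surface under the tame cover $u\mapsto u^d$. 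The divisor of $y-u(u+1)^{d/2}$ on the minimal regular model $\mathcal{E}_d$ is explicit enough to evaluate these local contributions, and the resulting expressions $\hat h(\chi)$ turn out to be nonzero for $\chi\ne\mathbf{1},\sgn$ by a finite character identity in $d$-th roots of unity. Hence $\langle P_i\rangle$ has rank exactly $d-2$ with relation module generated by $S^+$ and $S^-$. Finite index in $E(K_d)$ follows from the $BSD$ conjecture for $E/K_d$, which is available via Corollary~\ref{cor:dpc} since $E$ is defined by four monomials in $x,y,t$ and so $\XX_d$ is dominated by a Fermat surface. The main obstacle is the local height computation: tracking the intersection of the horizontal multisection defined by $P_0$ with every component of every bad fiber of $\mathcal{E}_d\to\P^1_u$ as $d$ varies, and verifying non-vanishing of the character values $\hat h(\chi)$ through an explicit root-of-unity identity.
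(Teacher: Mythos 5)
The statement you are proving is not proved in the paper at hand; it is cited to the separate paper \cite{UlmerLegendre}, whose argument is indeed built around an explicit computation of the N\'eron--Tate heights $\langle P_0,P_i\rangle$ and the observation that the Gram matrix is circulant. Your outline therefore follows the right general strategy, but it has three genuine gaps.

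First, the heart of the matter---the local-height computation and the ``finite character identity in $d$-th roots of unity''---is described as the main obstacle and left entirely undone. That identity is precisely the content of the proof: everything else in the argument (verification of $P_i\in E(K_d)$, the circulant structure, the positivity) is routine. What you have is an accurate program, not a proof.

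Second, your torsion argument for $S^-=\sum(-1)^iP_i$ does not work as written. Since $\sigma_j(S^-)=(-1)^jS^-$, the point $S^-$ is fixed by the index-two subgroup $\ker(\sgn)\subset\gal(K_d/K(\mu_d))$, hence lies in $E(L)$ for the unique quadratic subextension $L=K(\mu_d)(\sqrt t)\subset K_d$, but it lies in the $(-1)$-eigenspace for $\gal(L/K(\mu_d))$. That eigenspace is identified with points of the quadratic \emph{twist} of $E$ by $t$ over $K(\mu_d)$, not with $E$ over some rational subfield of $K_d$, so the ``rational elliptic surface has rank zero'' argument you use for $S^+$ does not transfer. (This is in any case redundant: if you are going to compute the height matrix, the vanishing of $\hat h(\mathbf 1)$ and $\hat h(\sgn)$ falls out of that computation directly.)

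Third, the claim that $E$ is ``defined by four monomials'' and hence $\XX_d$ is dominated by a Fermat surface is false: expanding $y^2=x(x+1)(x+t)=x^3+tx^2+x^2+tx$ gives five monomials, and the associated surface is not a Delsarte surface, so Corollary~\ref{cor:dpc} does not apply to it directly. Fortunately you do not need $BSD$ for finite index. Once the points are shown to span a rank-$(d-2)$ sublattice, the unconditional inequality $\rk E(K_d)\le\ord_{s=1}L(E/K_d,s)$ together with the elementary bound $\deg L(E/K_d,T)\le d-2$ (coming from conductor bookkeeping over the cyclic cover $u\mapsto u^d$) forces $\rk E(K_d)=d-2$ and hence finite index. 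Indeed, as the paper remarks immediately after the theorem, the equality $\ord_{s=1}L=\rk$, i.e.\ $BSD$ for $E/K_d$, is a \emph{consequence} of Theorem~\ref{thm:LP} and should not be used as an input.
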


It is easy to see that the $L$-function of $E/K_d$ has order of zero at
$s=1$ {\it a priori} bounded by $d-2$, so we have the equality
$\ord_{s=1}L(E/K_d,s)=\rk E(K_d)$, i.e., the $BSD$ conjecture.  Thus we
also have the refined $BSD$ conjecture.   Examining this leads to a
a beautiful ``analytic class number formula'':  Let $V_d$ be the
subgroup of $E(K_d)$ generated by the $P_i$ and  $E(K_d)_{tor}$.  (The
latter is easily seen to be of order 8.)  Then we have
$$[E(K_d):V_d]^2=|\sha(E/K_d)|$$
and these numbers are powers of $p$.

It then becomes a very interesting question to make the displayed
quantity more explicit.  It turns out that $E$ is closely related to
the $X$ of the previous section, and this brings the geometry of
``domination by a product of curves'' into play.  Using that, we are
able to give a complete description of $\sha(E/K_d)$ and $E(K_d)/V_d$
as modules over the group ring $\Zp[\gal(K_d/K)]$.  
%%   dlu 2012-Oct-26
%%   
These results will appear in a paper in preparation.
%%   dlu 2012-Oct-26

It also turns out that we can control the rank of $E(K_d)$ for general
$d$, not just those of the form $p^f+1$.  Surprisingly, we find
that there is high rank for many other values of $d$, and in a precise
quantitative sense, among the values of $d$ where $E(K_d)$ has large
rank, those prime to all $p^f+1$ are more numerous than those not
prime to $p^f+1$.  The results of this paragraph 
will appear in publications of the author and
%%   dlu 2012-Oct-26
%%     collaborators currently in preparation.
 collaborators, including \cite{PomeranceUlmer} and works cited there.
%%   dlu 2012-Oct-26

\section{Characteristic 0}
Despite significant effort, I have not been able to exploit Berger's
construction to produce elliptic curves (or Jacobians of any fixed
dimension) over $\C(t)$ with unbounded rank.  Roughly speaking,
efforts to increase the $\Hom(\dots)^{\mu_d}$ term, say by forcing
symmetry, seem also to increase the value of $c_1$.  (Although one
benefit of this effort came from examination of an example that led to
the explicit points in Theorem~\ref{thm:dpct-explicit} above.)

At the workshop I explained a heuristic which suggests that ranks
might be bounded in a Kummer tower in characteristic zero.
Theorem~\ref{thm:UZ} above is an example in this direction.  I do not
know how to prove anything that strong, but we do have the following
negative result.

Recall that an elliptic surface $\pi:\EE\to\CC$ has a height, which
can be defined as the degree of the invertible sheaf
$(R^1\pi_*\O_{\EE})^{-1}$.   When $\CC=\P^1$, the height is $\ge3$ if
and only if the Kodaira dimension of $\EE$ is 1.  Recall also
that there is a reasonable moduli space for elliptic surfaces of height
$d$ over $\C$ and it has dimension $10d-2$.  In the following ``very
general'' means ``belongs to the complement of countably many divisors
in the moduli space.''

\begin{thm}\label{thm:no-rank}
Suppose that $E/\C(t)$ is the generic fiber of $\EE\to\P^1$ over $\C$
where $\EE$ is a very general elliptic surface of height $d\ge3$.
Then for every finite extension $L/\C(t)$ where $L$ is rational field
\textup{(}i.e., $L\cong\C(u)$\textup{)}, we have $E(L)=0$.
\end{thm}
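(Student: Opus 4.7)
The plan is to apply a refined Noether--Lefschetz theorem in families to the pullback surfaces and conclude via a dimension count. A finite extension $L\cong\C(u)$ of $\C(t)$ corresponds to a finite morphism $\phi\colon\P^1_u\to\P^1_t$ of some degree $n$; let $\EE_\phi\to\P^1_u$ denote the relatively minimal smooth elliptic surface birational to $\EE\times_{\P^1_t}\P^1_u$, which has height $nd$. For very general $\EE$ the generic fiber $E$ is non-isotrivial, so the $L/\C$-trace of $E_L$ vanishes, and Shioda--Tate (Corollary~\ref{cor:STcor}) gives
$$\rk E(L)\;=\;\rk\NS(\EE_\phi)\;-\;2\;-\;\sum_{v\in\P^1_u}(f_v-1).$$
It therefore suffices to show that for very general $\EE$ and every such $\phi$, $\NS(\EE_\phi)\otimes\Q$ is spanned by its trivial sublattice (zero section, a fiber, and non-identity components of reducible fibers); the torsion in $E(L)$ is handled analogously, using that the torsion fields of a very general $E$ are non-rational over $\C(t)$.

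Fix an integer $n\geq 1$ and a discrete ramification/monodromy type $\alpha$. Let $M_d$ be the moduli of relatively minimal height-$d$ elliptic surfaces over $\P^1$ with section, and let $H_{n,\alpha}$ be the corresponding (suitably rigidified) Hurwitz moduli of degree-$n$ covers $\P^1\to\P^1$; by Riemann--Hurwitz, $\dim H_{n,\alpha}\leq 2n-2$. Over $M_d\times H_{n,\alpha}$ there is a smooth projective family of surfaces $\EE_\phi$ with $h^{2,0}(\EE_\phi)=nd-1$. A refined Noether--Lefschetz theorem, applied via the infinitesimal variation of Hodge structure of this family, says that each Noether--Lefschetz component $NL_\beta\subset M_d\times H_{n,\alpha}$ (a locus along which some rational $(1,1)$-class becomes algebraic) has codimension at least $h^{2,0}(\EE_\phi)=nd-1$, provided the derivative of the period map has sufficient rank. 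Since $d\geq 3$ forces $nd-1\geq 3n-1>2n-2\geq\dim H_{n,\alpha}$, this yields $\dim NL_\beta<\dim M_d$, so each $NL_\beta$ projects to a proper subvariety of $M_d$.

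Removing the countable union of these projected subvarieties from $M_d$, as $(n,\alpha,\beta)$ varies, yields the desired very general locus $M_d^\circ$, and any $\EE\in M_d^\circ$ satisfies the theorem. The main obstacle is verifying the rank condition on the derivative of the period map needed for the refined codimension bound---essentially an infinitesimal Torelli statement for the family $\{\EE_\phi\}$ over $M_d\times H_{n,\alpha}$. For $\EE$ varying in $M_d$ alone this is a standard input for height-$d\geq 3$ elliptic surfaces; I would extend it to the product family by observing that $\phi^*$ injects the variation of $H^2(\EE,\Q)$ into that of $H^2(\EE_\phi,\Q)$, so the $\EE$-directions alone already supply enough rank and the $H_{n,\alpha}$-directions only augment it.
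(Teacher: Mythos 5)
The paper gives only a one-sentence indication of the proof: it is ``proven by controlling the collection of rational curves on $\EE$,'' i.e., one analyzes the Hilbert schemes of rational multisections of $\EE\to\P^1$ itself and shows by deformation theory (using the negativity $C^2 = 2p_a(C)-2-(d-2)n$ forced by adjunction, together with $p_a\ge 0$) that the locus of surfaces carrying such a curve is a countable union of proper subvarieties of $M_d$. Your reduction via Shioda--Tate to $\NS(\EE_\phi)$ is fine (the vanishing of the $L/\C$-trace for non-isotrivial $E$ is correct, and a nonzero point of $E(L)$ does give an extra class), but you then pursue a Hodge-theoretic route through the cohomology of the pullback surfaces rather than a geometric route through rational curves on $\EE$.

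The problem is the central codimension claim. You assert that each Noether--Lefschetz component in $M_d\times H_{n,\alpha}$ has codimension at least $h^{2,0}(\EE_\phi)=nd-1$, ``provided the derivative of the period map has sufficient rank,'' and treat the latter as an infinitesimal Torelli problem. But the universal statement coming from periods runs the other way: $NL_\lambda$ is cut out locally by $h^{2,0}$ holomorphic functions, so its codimension is \emph{at most} $h^{2,0}$. Getting a matching lower bound is not an infinitesimal Torelli condition---injectivity of $\bar\nabla$ on the base does not imply that the $h^{2,0}$ equations $\langle\lambda,\omega_i\rangle=0$ have independent differentials; that is a transversality condition depending on the particular class $\lambda$, and it fails precisely for geometrically meaningful $\lambda$ (the classes of sections, which are exactly the ones you must exclude). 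In the model case of degree-$d$ surfaces in $\P^3$, the sharp lower bound on the codimension of an NL component (Green, Voisin) is $d-3$, not $h^{2,0}=\binom{d-1}{3}$; special loci such as ``contains a line'' actually realize the small codimension. Your argument needs codimension $>\dim H_{n,\alpha}$ for \emph{every} component, including the ones parametrizing section classes, and no ``refined Noether--Lefschetz theorem'' delivers that off the shelf. Establishing the needed bound for section classes would require exactly the geometric rigidity analysis of the associated rational curves---which is the content of the paper's actual proof. Your supporting claim that the $M_d$-directions alone supply enough rank is also suspect: they only move the $(d-1)$-dimensional pullback piece $\phi^*H^{2,0}(\EE)$ inside the $(nd-1)$-dimensional $H^{2,0}(\EE_\phi)$, and the $\le 2n-2$ Hurwitz directions are far too few to fill the remaining $(n-1)d$ directions when $n\ge 2$; so the transversality would have to be argued much more carefully than you indicate, and the gap is genuine.
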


The theorem is proven by controlling the collection of rational curves
on $\EE$ and is a significant strengthening of a Noether-Lefschetz
type result (cf.~\cite{Cox90}) according to which the N\'eron-Severi
group of a very general elliptic surface is generated by the zero
section and a fiber.  More details on Theorem~\ref{thm:no-rank} and
the heuristic that suggested it will appear in a paper currently in
preparation.

\bibliography{database}{}

\def\cprime{$'$} \def\cprime{$'$} \def\cprime{$'$} \def\cprime{$'$}
\begin{thebibliography}{10}

\bibitem{Artin74}
M.~Artin.
\newblock Supersingular {$K3$} surfaces.
\newblock {\em Ann. Sci. \'Ecole Norm. Sup. (4)}, 7:543--567 (1975), 1974.

\bibitem{Artin86b}
M.~Artin.
\newblock Lipman's proof of resolution of singularities for surfaces.
\newblock In {\em Arithmetic geometry ({S}torrs, {C}onn., 1984)}, pages
  267--287. Springer, New York, 1986.

\bibitem{Artin86a}
M.~Artin.
\newblock N\'eron models.
\newblock In {\em Arithmetic geometry ({S}torrs, {C}onn., 1984)}, pages
  213--230. Springer, New York, 1986.

\bibitem{ArtinSwinnertonDyer73}
M.~Artin and H.~P.~F. Swinnerton-Dyer.
\newblock The {S}hafarevich-{T}ate conjecture for pencils of elliptic curves on
  {$K3$} surfaces.
\newblock {\em Invent. Math.}, 20:249--266, 1973.

\bibitem{ArtinWinters71}
M.~Artin and G.~Winters.
\newblock Degenerate fibres and stable reduction of curves.
\newblock {\em Topology}, 10:373--383, 1971.

\bibitem{BadescuAS}
L.~B{\u{a}}descu.
\newblock {\em Algebraic surfaces}.
\newblock Universitext. Springer-Verlag, New York, 2001.
\newblock Translated from the 1981 Romanian original by Vladimir Ma{\c{s}}ek
  and revised by the author.

\bibitem{Bauer92}
W.~Bauer.
\newblock On the conjecture of {B}irch and {S}winnerton-{D}yer for abelian
  varieties over function fields in characteristic {$p>0$}.
\newblock {\em Invent. Math.}, 108:263--287, 1992.

\bibitem{Berger08}
L.~Berger.
\newblock Towers of surfaces dominated by products of curves and elliptic
  curves of large rank over function fields.
\newblock {\em J. Number Theory}, 128:3013--3030, 2008.

\bibitem{BoschLutkebohmertRaynaudNM}
S.~Bosch, W.~L{\"u}tkebohmert, and M.~Raynaud.
\newblock {\em N\'eron models}, volume~21 of {\em Ergebnisse der Mathematik und
  ihrer Grenzgebiete (3) [Results in Mathematics and Related Areas (3)]}.
\newblock Springer-Verlag, Berlin, 1990.

\bibitem{GrothendieckSerre01}
Pierre Colmez and Jean-Pierre Serre, editors.
\newblock {\em Correspondance {G}rothendieck-{S}erre}.
\newblock Documents Math\'ematiques (Paris) [Mathematical Documents (Paris)],
  2. Soci\'et\'e Math\'ematique de France, Paris, 2001.

\bibitem{Conrad06}
B.~Conrad.
\newblock Chow's {$K/k$}-image and {$K/k$}-trace, and the {L}ang-{N}\'eron
  theorem.
\newblock {\em Enseign. Math. (2)}, 52:37--108, 2006.

\bibitem{Cox90}
D.~A. Cox.
\newblock The {N}oether-{L}efschetz locus of regular elliptic surfaces with
  section and {$p\sb g\ge 2$}.
\newblock {\em Amer. J. Math.}, 112:289--329, 1990.

\bibitem{Deligne80}
P.~Deligne.
\newblock La conjecture de {W}eil. {II}.
\newblock {\em Inst. Hautes \'Etudes Sci. Publ. Math.}, (52):137--252, 1980.

\bibitem{Faltings86}
G.~Faltings.
\newblock Finiteness theorems for abelian varieties over number fields.
\newblock In {\em Arithmetic geometry ({S}torrs, {C}onn., 1984)}, pages 9--27.
  Springer, New York, 1986.
\newblock Translated from the German original [Invent. Math. {{\bf{7}}3}
  (1983), no. 3, 349--366; ibid. {{\bf{7}}5} (1984), no. 2, 381; MR
  85g:11026ab] by Edward Shipz.

\bibitem{Gordon79}
W.~J. Gordon.
\newblock Linking the conjectures of {A}rtin-{T}ate and
  {B}irch-{S}winnerton-{D}yer.
\newblock {\em Compositio Math.}, 38:163--199, 1979.

\bibitem{Gross86}
B.~H. Gross.
\newblock Local heights on curves.
\newblock In {\em Arithmetic geometry ({S}torrs, {C}onn., 1984)}, pages
  327--339. Springer, New York, 1986.

\bibitem{GrothendieckFGA}
A.~Grothendieck.
\newblock {\em Fondements de la g\'eom\'etrie alg\'ebrique. [{E}xtraits du
  {S}\'eminaire {B}ourbaki, 1957--1962.]}.
\newblock Secr\'etariat math\'ematique, Paris, 1962.

\bibitem{Grothendieck68}
A.~Grothendieck.
\newblock Le groupe de {B}rauer. {III}. {E}xemples et compl\'ements.
\newblock In {\em Dix {E}xpos\'es sur la {C}ohomologie des {S}ch\'emas}, pages
  88--188. North-Holland, Amsterdam, 1968.

\bibitem{HartshorneAG}
R.~Hartshorne.
\newblock {\em Algebraic geometry}.
\newblock Springer-Verlag, New York, 1977.
\newblock Graduate Texts in Mathematics, No. 52.

\bibitem{HindryPacheco05}
M.~Hindry and A.~Pacheco.
\newblock Sur le rang des jacobiennes sur un corps de fonctions.
\newblock {\em Bull. Soc. Math. France}, 133:275--295, 2005.

\bibitem{Illusie79a}
L.~Illusie.
\newblock Complexe de de\thinspace {R}ham-{W}itt.
\newblock In {\em Journ\'ees de {G}\'eom\'etrie {A}lg\'ebrique de {R}ennes
  ({R}ennes, 1978), {V}ol. {I}}, volume~63 of {\em Ast\'erisque}, pages
  83--112. Soc. Math. France, Paris, 1979.

\bibitem{Illusie83}
L.~Illusie.
\newblock Finiteness, duality, and {K}\"unneth theorems in the cohomology of
  the de {R}ham-{W}itt complex.
\newblock In {\em Algebraic geometry ({T}okyo/{K}yoto, 1982)}, volume 1016 of
  {\em Lecture Notes in Math.}, pages 20--72. Springer, Berlin, 1983.

\bibitem{KatoTrihan03}
K.~Kato and F.~Trihan.
\newblock On the conjectures of {B}irch and {S}winnerton-{D}yer in
  characteristic {$p>0$}.
\newblock {\em Invent. Math.}, 153:537--592, 2003.

\bibitem{KatzMMP}
N.~M. Katz.
\newblock {\em Moments, monodromy, and perversity: a {D}iophantine
  perspective}, volume 159 of {\em Annals of Mathematics Studies}.
\newblock Princeton University Press, Princeton, NJ, 2005.

\bibitem{Kleiman05}
S.~L. Kleiman.
\newblock The {P}icard scheme.
\newblock In {\em Fundamental algebraic geometry}, volume 123 of {\em Math.
  Surveys Monogr.}, pages 235--321. Amer. Math. Soc., Providence, RI, 2005.

\bibitem{Kramer77}
K.~Kramer.
\newblock Two-descent for elliptic curves in characteristic two.
\newblock {\em Trans. Amer. Math. Soc.}, 232:279--295, 1977.

\bibitem{Lichtenbaum68}
S.~Lichtenbaum.
\newblock Curves over discrete valuation rings.
\newblock {\em Amer. J. Math.}, 90:380--405, 1968.

\bibitem{Lichtenbaum69}
S.~Lichtenbaum.
\newblock Duality theorems for curves over {$p$}-adic fields.
\newblock {\em Invent. Math.}, 7:120--136, 1969.

\bibitem{Lichtenbaum83}
S.~Lichtenbaum.
\newblock Zeta functions of varieties over finite fields at {$s=1$}.
\newblock In {\em Arithmetic and geometry, {V}ol. {I}}, volume~35 of {\em
  Progr. Math.}, pages 173--194. Birkh\"auser Boston, Boston, MA, 1983.

\bibitem{Liedtke09}
C.~Liedtke.
\newblock A note on non-reduced {P}icard schemes.
\newblock {\em J. Pure Appl. Algebra}, 213:737--741, 2009.

\bibitem{Lipman78}
J.~Lipman.
\newblock Desingularization of two-dimensional schemes.
\newblock {\em Ann. Math. (2)}, 107:151--207, 1978.

\bibitem{LiuAGAC}
Q.~Liu.
\newblock {\em Algebraic geometry and arithmetic curves}, volume~6 of {\em
  Oxford Graduate Texts in Mathematics}.
\newblock Oxford University Press, Oxford, 2002.
\newblock Translated from the French by Reinie Ern{\'e}, Oxford Science
  Publications.

\bibitem{LiuLorenziniRaynaud04}
Q.~Liu, D.~Lorenzini, and M.~Raynaud.
\newblock N\'eron models, {L}ie algebras, and reduction of curves of genus one.
\newblock {\em Invent. Math.}, 157:455--518, 2004.

\bibitem{LiuLorenziniRaynaud05}
Q.~Liu, D.~Lorenzini, and M.~Raynaud.
\newblock On the {B}rauer group of a surface.
\newblock {\em Invent. Math.}, 159:673--676, 2005.

\bibitem{Manin71}
Ju.~I. Manin.
\newblock Cyclotomic fields and modular curves.
\newblock {\em Uspehi Mat. Nauk}, 26:7--71, 1971.

\bibitem{Milne70}
J.~S. Milne.
\newblock Elements of order {$p$} in the {T}ate-\v {S}afarevi\v c group.
\newblock {\em Bull. London Math. Soc.}, 2:293--296, 1970.

\bibitem{Milne75}
J.~S. Milne.
\newblock On a conjecture of {A}rtin and {T}ate.
\newblock {\em Ann. of Math. (2)}, 102:517--533, 1975.

\bibitem{MilneEC}
J.~S. Milne.
\newblock {\em Etale cohomology}, volume~33 of {\em Princeton Mathematical
  Series}.
\newblock Princeton University Press, Princeton, N.J., 1980.

\bibitem{MilneADT}
J.~S. Milne.
\newblock {\em Arithmetic duality theorems}, volume~1 of {\em Perspectives in
  Mathematics}.
\newblock Academic Press Inc., Boston, MA, 1986.

\bibitem{Milne86jv}
J.~S. Milne.
\newblock Jacobian varieties.
\newblock In {\em Arithmetic geometry ({S}torrs, {C}onn., 1984)}, pages
  167--212. Springer, New York, 1986.

\bibitem{Milne86a}
J.~S. Milne.
\newblock Values of zeta functions of varieties over finite fields.
\newblock {\em Amer. J. Math.}, 108:297--360, 1986.

\bibitem{MoretBailly85}
L.~Moret-Bailly.
\newblock Pinceaux de vari\'et\'es ab\'eliennes.
\newblock {\em Ast\'erisque}, (129):266, 1985.

\bibitem{MumfordLCAS}
D.~Mumford.
\newblock {\em Lectures on curves on an algebraic surface}.
\newblock With a section by G. M. Bergman. Annals of Mathematics Studies, No.
  59. Princeton University Press, Princeton, N.J., 1966.

\bibitem{MumfordOdaAG2}
D.~Mumford and T.~Oda.
\newblock {\em Algebraic Geometry II}.
\newblock In preparation.

\bibitem{NygaardOgus85}
N.~Nygaard and A.~Ogus.
\newblock Tate's conjecture for {$K3$} surfaces of finite height.
\newblock {\em Ann. of Math. (2)}, 122:461--507, 1985.

\bibitem{OcchipintiThesis}
T.~Occhipinti.
\newblock {\em Mordell-{W}eil groups of large rank in towers}.
\newblock ProQuest LLC, Ann Arbor, MI, 2010.
\newblock Thesis (Ph.D.)--The University of Arizona.

\bibitem{PomeranceUlmer}
C.~Pomerance and D.~Ulmer.
\newblock On balanced subgroups of the multiplicative group.
\newblock 2012.
\newblock Preprint, arXiv:1204.6705.

\bibitem{PriesUlmerAS}
R.~Pries and D.~Ulmer.
\newblock Arithmetic of {J}acobians in {A}rtin-{S}chreier extensions of
  function fields.
\newblock 2012.
\newblock In preparation, title tentative.

\bibitem{Raynaud70}
M.~Raynaud.
\newblock Sp\'ecialisation du foncteur de {P}icard.
\newblock {\em Inst. Hautes \'Etudes Sci. Publ. Math.}, (38):27--76, 1970.

\bibitem{Schneider82}
P.~Schneider.
\newblock Zur {V}ermutung von {B}irch und {S}winnerton-{D}yer \"uber globalen
  {F}unktionenk\"orpern.
\newblock {\em Math. Ann.}, 260:495--510, 1982.

\bibitem{Schoen96}
C.~Schoen.
\newblock Varieties dominated by product varieties.
\newblock {\em Internat. J. Math.}, 7:541--571, 1996.

\bibitem{Serre58}
J.-P. Serre.
\newblock Sur la topologie des vari\'et\'es alg\'ebriques en caract\'eristique
  {$p$}.
\newblock In {\em Symposium internacional de topolog\'\i a algebraica
  {I}nternational symposium on algebraic topology}, pages 24--53. Universidad
  Nacional Aut\'onoma de M\'exico and UNESCO, Mexico City, 1958.

\bibitem{SerreLF}
J.-P. Serre.
\newblock {\em Local fields}, volume~67 of {\em Graduate Texts in Mathematics}.
\newblock Springer-Verlag, New York, 1979.
\newblock Translated from the French by Marvin Jay Greenberg.

\bibitem{SGA6}
{\em Th\'eorie des intersections et th\'eor\`eme de {R}iemann-{R}och}.
\newblock Lecture Notes in Mathematics, Vol. 225. Springer-Verlag, Berlin,
  1971.
\newblock S{\'e}minaire de G{\'e}om{\'e}trie Alg{\'e}brique du Bois-Marie
  1966--1967 (SGA 6), Dirig{\'e} par P. Berthelot, A. Grothendieck et L.
  Illusie. Avec la collaboration de D. Ferrand, J. P. Jouanolou, O. Jussila, S.
  Kleiman, M. Raynaud et J. P. Serre.

\bibitem{Shioda86}
T.~Shioda.
\newblock An explicit algorithm for computing the {P}icard number of certain
  algebraic surfaces.
\newblock {\em Amer. J. Math.}, 108:415--432, 1986.

\bibitem{Shioda91}
T.~Shioda.
\newblock Mordell-{W}eil lattices and sphere packings.
\newblock {\em Amer. J. Math.}, 113:931--948, 1991.

\bibitem{Shioda99}
T.~Shioda.
\newblock Mordell-{W}eil lattices for higher genus fibration over a curve.
\newblock In {\em New trends in algebraic geometry ({W}arwick, 1996)}, volume
  264 of {\em London Math. Soc. Lecture Note Ser.}, pages 359--373. Cambridge
  Univ. Press, Cambridge, 1999.

\bibitem{Szydlo04}
M.~Szydlo.
\newblock Elliptic fibers over non-perfect residue fields.
\newblock {\em J. Number Theory}, 104:75--99, 2004.

\bibitem{Tate65}
J.~T. Tate.
\newblock Algebraic cycles and poles of zeta functions.
\newblock In {\em Arithmetical Algebraic Geometry ({P}roc. {C}onf. {P}urdue
  {U}niv., 1963)}, pages 93--110. Harper \& Row, New York, 1965.

\bibitem{Tate66a}
J.~T. Tate.
\newblock Endomorphisms of abelian varieties over finite fields.
\newblock {\em Invent. Math.}, 2:134--144, 1966.

\bibitem{Tate66b}
J.~T. Tate.
\newblock On the conjectures of {B}irch and {S}winnerton-{D}yer and a geometric
  analog.
\newblock In {\em S\'eminaire Bourbaki, Vol.\ 9}, pages Exp.\ No.\ 306,
  415--440. Soc. Math. France, Paris, 1966.

\bibitem{Tate94}
J.~T. Tate.
\newblock Conjectures on algebraic cycles in {$\ell$}-adic cohomology.
\newblock In {\em Motives ({S}eattle, {WA}, 1991)}, volume~55 of {\em {P}roc.
  {S}ympos. {P}ure {M}ath.}, pages 71--83. Amer. Math. Soc., Providence, RI,
  1994.

\bibitem{Ulmer91}
D.~Ulmer.
\newblock {$p$}-descent in characteristic {$p$}.
\newblock {\em Duke Math. J.}, 62:237--265, 1991.

\bibitem{Ulmer96a}
D.~Ulmer.
\newblock On the {F}ourier coefficients of modular forms. {II}.
\newblock {\em Math. Ann.}, 304:363--422, 1996.

\bibitem{Ulmer02}
D.~Ulmer.
\newblock Elliptic curves with large rank over function fields.
\newblock {\em Ann. of Math. (2)}, 155:295--315, 2002.

\bibitem{Ulmer05}
D.~Ulmer.
\newblock Geometric non-vanishing.
\newblock {\em Invent. Math.}, 159:133--186, 2005.

\bibitem{Ulmer07b}
D.~Ulmer.
\newblock {$L$}-functions with large analytic rank and abelian varieties with
  large algebraic rank over function fields.
\newblock {\em Invent. Math.}, 167:379--408, 2007.

\bibitem{UlmerLegendre}
D.~Ulmer.
\newblock Explicit points on the {L}egendre curve.
\newblock 2009.
\newblock Preprint, arXiv:1002.3313.

\bibitem{Ulmer11}
D.~Ulmer.
\newblock Elliptic curves over function fields.
\newblock In {\em Arithmetic of ${L}$-functions ({P}ark {C}ity, {UT}, 2009)},
  volume~18 of {\em IAS/Park City Math. Ser.}, pages 211--280. Amer. Math.
  Soc., Providence, RI, 2011.

\bibitem{UlmerDPCT}
D.~Ulmer.
\newblock On {M}ordell-{W}eil groups of {J}acobians over function fields.
\newblock 2013.
\newblock To appear in the Journal of the Institute of Mathematics of Jussieu.

\bibitem{UlmerZarhin10}
D.~Ulmer and Y.~G. Zarhin.
\newblock Ranks of {J}acobians in towers of function fields.
\newblock {\em Math. Res. Lett.}, 17:637--645, 2010.

\bibitem{Zarhin76}
Ju.~G. Zarhin.
\newblock Abelian varieties in characteristic {$p$}.
\newblock {\em Mat. Zametki}, 19:393--400, 1976.

\end{thebibliography}
\bibliographystyle{plain}

\end{document}